\theoremstyle{definition}
\newtheorem{theorem}{Theorem}[section]
\newtheorem{proposition}[theorem]{Proposition}
\newtheorem{lemma}[theorem]{Lemma}
\newtheorem{remark}[theorem]{Remark}
\newtheorem{notation}[theorem]{Notation}
\title{$\mathbb{Z}_2$-graded  polynomial identities for the Jordan algebra of $2\times 2$ upper triangular matrices}
\author{
Dimas Jos\'e Gon\c{c}alves
\\
 Universidade Federal de S\~ao Carlos (UFSCar) \\
Departamento de Matem\'{a}tica \\
13565-905 S\~ao Carlos, SP, Brasil\\
e-mail: \texttt{dimasjg@ufscar.br}\\
\\
Mateus Eduardo Salom\~ao \\
 Universidade Federal de S\~ao Carlos (UFSCar) \\
Departamento de Matem\'{a}tica \\
13565-905 S\~ao Carlos, SP, Brasil\\
e-mail: \texttt{m.salomaoutfpr@gmail.com}
}
\begin{document}

\maketitle

\noindent\textbf{Keywords:} Upper triangular matrix algebra, Jordan algebra, Graded polynomial identities.

\noindent\textbf{2010 AMS MSC Classification:} 16R10, 17C05.

\

\begin{abstract}
Let $K$ be a field (finite or infinite) of char$(K)\neq 2$ and let $UT_n=UT_n(K)$ be the  $n\times n$ upper triangular matrix algebra over $K$. 
If $\cdot $ is the usual product on $UT_n$ then with the new product $a\circ b=(1/2)(a\cdot b +b\cdot a)$ we have that $UT_n$
is a Jordan algebra, denoted by $UJ_n=UJ_n(K)$. In this paper, we describe the set of all $\mathbb{Z}_2$-graded 
polynomial identities of $UJ_2$ with any nontrivial $\mathbb{Z}_2$-grading. Moreover, we describe a linear basis for the 
corresponding relatively free $\mathbb{Z}_2$-graded algebra.
\end{abstract}

\section{Introduction}

Let $K$ be a field and let $UT_n=UT_n(K)$ be the algebra of $n\times n$ upper triangular matrices over 
$K$.  This algebra plays an important role in PI-Theory, and its polynomial identities were described in 
\cite{Maltsev,Siderov}.

With respect to gradings, let $G$ be any group. The $G$-gradings on $UT_n$ were described as follows: In \cite{Valenti}, it was proved that every $G$-grading on $UT_n$ is isomorphic to an elementary $G$-grading
; in \cite{vicova}, 
the elementary $G$-gradings were classified. In \cite{vicova}, it was proved that two $G$-gradings on $UT_n$ are isomorphic if and only if
they satisfy the same $G$-graded polynomial identities. Moreover, the set of all $G$-graded polynomial identities of $UT_n$ was described as follows: in \cite{vicova} when $K$ is an infinite field, in \cite{evandro} when $K$ is a finite field.


From now on, we assume char$(K)\neq 2$.
Denote by $UJ_n=UJ_n(K)$ the vector space $UT_n$ with a new product $\circ $ given by
\[u\circ v=(1/2)(u\cdot v +v\cdot u)\]
where $u,v \in UT_n$. Then $UJ_n$ is a Jordan algebra. 

In \cite{faco}, the polynomial identities of $UJ_2$ were described when $K$ is an infinite field of char$(K)\neq 2,3$.
It is an open problem to describe the polynomial identities of $UJ_n$ when $n\geq 3$. 

With respect to gradings, let $G$ be any group. 
All $\mathbb{Z}_2$-gradings on $UJ_2$ were described in \cite{faco}. After,
in \cite{feco} it was proved that  if $K$ is infinite, then 
every $G$-grading on $UJ_n$ is, up to a graded isomorphism,  either elementary or MT (mirror type). 
Moreover, in \cite{feco} the authors proved that 
two $G$-gradings on $UJ_n$ are isomorphic if and only if
they satisfy the same $G$-graded polynomial identities. 

In \cite{faco}, Koshlukov and Martino described the set of all  
$\mathbb{Z}_2$-graded 
polynomial identities of $UJ_2$ when $K$ is any field of characteristic $0$ and, as a consequence, 
in \cite{centromartinosouza} it was proved that the variety of Jordan algebras generated by $UJ_2$ endowed with 
any $G$-grading has the Specht property.

Since almost nothing is known concerning polynomial identities of Jordan algebras, and 
motivated by the results above, given any nontrivial $\mathbb{Z}_2$-grading on $UJ_2$ we describe the set of its 
$\mathbb{Z}_2$-graded 
polynomial identities when $K$ is any field (finite or infinite).
Moreover, we describe a linear basis for the 
corresponding relatively free $\mathbb{Z}_2$-graded algebra when $K$ is any field (finite or infinite). 
In order to obtain our descriptions we use some ideas from the paper \cite{faco}.


We draw the reader's attention to the fact that gradings and graded polynomial identities for $UT_n$ were studied in another context too: Lie algebra. See, for example, \cite{ feco2}.

\section{Preliminaries} 

Throughout this section,  $K$ is a field (finite or infinite) with $\mbox{char}(K) \neq 2$. 

Let $UT_n=UT_n(K)$ be the  $n\times n$ upper triangular matrix algebra over $K$. On this algebra we consider the 
usual product $\cdot$ and so it is an associative algebra. Denote by $UJ_n=UJ_n(K)$ the vector space $UT_n$ with a new product $\circ $ given by
\[u\circ v=(1/2)(u\cdot v +v\cdot u)\]
where $u,v \in UT_n$. Then $UJ_n$ is a Jordan algebra. 

Let $e_{ij}$ be the matrix unit in $UJ_n$ whose $(i,j)$th entry equals $1$ and all other entries equal $0$. 
We denote
\[1=e_{11}+e_{22}, \ a=e_{11}-e_{22} \ \mbox{and} \ b=e_{12}.\]
Note that 
\[a\circ a=1 \ \mbox{and} \ a\circ b=b\circ b =0.\]
For convenience, if $u,v\in UJ_n$ we will write $u\circ v = uv$. 

Let $\mathbb{Z}_2=\{0,1\}$ and denote by 
\[UJ_2=(UJ_2)_0\oplus (UJ_2)_1\]
a $\mathbb{Z}_2$-grading on $UJ_2$. Thus $(UJ_2)_i\circ (UJ_2)_j \subseteq (UJ_2)_{i+j}$ for all $i,j \in \mathbb{Z}_2$.

The next proposition was proved in \cite{faco}.

\begin{proposition}\label{graddeJ}
The following decompositions $UJ_2 = (UJ_2)_0 \oplus (UJ_2)_1$ are $\mathbb{Z}_2$-gradings on $UJ_2(K):$
\begin{enumerate}
\item The associative grading: $(UJ_2)_0 = K \oplus Kb$, $(UJ_2)_1 = Ka$;
\item The scalar grading: $(UJ_2)_0 = K$, $(UJ_2)_1 = Ka \oplus Kb$;
\item The classical grading: $(UJ_2)_0 = K \oplus Ka$, $(UJ_2)_1 = Kb$;
\item The trivial grading: $(UJ_2)_0=UJ_2$, $(UJ_2)_1=0$;
\end{enumerate}
where we identify $K$ with the scalar matrices in $UJ_2$. The four gradings are pairwise nonisomorphic. 
They are, up to a $\mathbb{Z}_2$-graded isomorphism, the only $\mathbb{Z}_2$-gradings on $UJ_2$.  
\end{proposition}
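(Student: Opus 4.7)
The plan is to prove the statement in three stages: verify that each of the four decompositions really is a $\mathbb{Z}_2$-grading, distinguish the four up to graded isomorphism, and then show that every $\mathbb{Z}_2$-grading of $UJ_2$ coincides, up to a graded isomorphism, with one of them.

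The first stage reduces to checking $(UJ_2)_i\circ(UJ_2)_j\subseteq (UJ_2)_{i+j}$ in each case, which is immediate from $a\circ a=1$, $a\circ b=b\circ b=0$ and the fact that $1$ is the unit. For the second stage, the pair $(\dim(UJ_2)_0,\dim(UJ_2)_1)$ equals $(3,0)$, $(2,1)$, $(1,2)$, $(2,1)$ for the trivial, associative, scalar and classical gradings, which separates everything except the associative from the classical grading. To separate those two I would compare their even components as Jordan algebras: in the associative case $(UJ_2)_0=K\oplus Kb$ contains the nonzero square-zero element $b$, while in the classical case $(UJ_2)_0=K\oplus Ka$ satisfies $a\circ a=1$ and is therefore Jordan-isomorphic to $K\oplus K$, which has no nonzero nilpotent; no graded isomorphism can identify them.

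For the uniqueness stage, let $UJ_2=A_0\oplus A_1$ be an arbitrary grading. A standard argument forces $1\in A_0$: writing $1=e_0+e_1$ and expanding $x=1\circ x$ on homogeneous $x$ gives $e_1\circ x=0$ for every $x$, hence $e_1=e_1\circ 1=0$. Thus $K\subseteq A_0$. The key identity is that for any $u=\alpha+\beta a+\gamma b\in A_1$ a direct computation yields
\[
u\circ u-2\alpha u=\beta^2-\alpha^2\in K\subseteq A_0.
\]
Since $u\circ u\in A_0$ and $2\alpha u\in A_1$, and the right-hand side lies in $A_0$, the $A_1$-component $-2\alpha u$ must vanish; as $\mbox{char}(K)\neq 2$ and $u\neq 0$, this forces $\alpha=0$. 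Hence $A_1\subseteq Ka\oplus Kb$.

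Now I split on $\dim A_1\in\{0,1,2\}$. The cases $\dim A_1=0$ and $\dim A_1=2$ close at once, yielding the trivial and (by dimension count) the scalar grading respectively. For $\dim A_1=1$ let $u=\beta a+\gamma b$ span $A_1$ and write $A_0=K\oplus Kw$ with $w=\delta a+\epsilon b$ (after subtracting off the scalar part of $w$). The condition $w\circ u\in A_1$ evaluates to the scalar $\delta\beta$, which must therefore equal $0$, and a short case split on whether $\beta\neq 0$ or $\beta=0$ produces either $A_0=K\oplus Kb$ (associative shape) or $A_1=Kb$ with $A_0=K\oplus K(\delta a+\epsilon b)$, $\delta\neq 0$ (classical shape). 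In both cases the linear map $\phi(1)=1$, $\phi(a)=a+\lambda b$, $\phi(b)=b$ for a suitable $\lambda\in K$ is a Jordan automorphism of $UJ_2$ (immediate from the multiplication table) that carries the model grading to the given one. The main obstacle I anticipate is just organising this last case cleanly and producing the explicit automorphism; conceptually everything is driven by the displayed identity.
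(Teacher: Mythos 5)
Your argument is correct and, as far as I can check, complete. Note that the paper does not actually prove Proposition \ref{graddeJ}: it defers entirely to \cite[Lemma 2, Lemma 3, Proposition 4]{faco}, so your proposal supplies a self-contained proof rather than a variant of an argument given in this paper. The two computational pivots both check out: writing $1=e_0+e_1$ and comparing homogeneous components of $x=1\circ x$ does force $e_1\circ x=0$ for all $x$, hence $e_1=e_1\circ 1=0$ and $K\subseteq A_0$; and for $u=\alpha 1+\beta a+\gamma b$ one has $u\circ u=(\alpha^2+\beta^2)1+2\alpha\beta a+2\alpha\gamma b$, so $u\circ u-2\alpha u=(\beta^2-\alpha^2)1\in A_0$, whence $2\alpha u\in A_0\cap A_1=0$ and $\alpha=0$ for every nonzero odd $u$; this is exactly what is needed to get $A_1\subseteq Ka\oplus Kb$. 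The remaining case split is routine: in the one-dimensional odd case the only constraint is $w\circ u=\delta\beta\cdot 1\in A_0\cap A_1=0$, and the unipotent map $1\mapsto 1$, $a\mapsto a+\lambda b$, $b\mapsto b$ is a Jordan automorphism because it preserves $a\circ a=1$ and $a\circ b=b\circ b=0$ (one should also record, as you implicitly do, that any homogeneous spanning vector of a one-dimensional $A_1$ may be rescaled so that the automorphism matches it exactly, using that $\phi(a)\circ\phi(a)=1$ forces the coefficient of $a$ in $\phi(a)$ to square to $1$). For separating the associative from the classical grading, a slightly quicker invariant than the structure of the even part is $(UJ_2)_1\circ (UJ_2)_1$, which equals $K\neq 0$ for the associative grading and $0$ for the classical one; but your argument via $K\oplus Ka\cong K\times K$ having no nonzero square-zero elements is equally valid, since a graded isomorphism restricts to a Jordan isomorphism of the even components.
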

\begin{proof}
See \cite[Lemma 2, Lemma 3, Proposition 4]{faco}.
\end{proof}

Let $Y=\{y_1,y_2,\ldots \}$ and $Z=\{z_1,z_2,\ldots \}$ be disjoint infinite sets. If $X=Y \cup Z$, denote by $J(X)$
the  free $\mathbb{Z}_2$-graded Jordan algebra, freely generated by $X$ over $K$. We remember that $J(X)$ is the 
free Jordan algebra, freely generated by $X$ over $K$, and 
\[J(X)=(J(X))_0\oplus (J(X))_1\]
is its $\mathbb{Z}_2$-grading where
$||y_i||=0$, $||z_i||=1$ for all $i$, and if $u,v$ are monomials in $J(X)$ then
\[||uv||=||u||+||v||.\]
Note that the notation $|| \ ||$  means the homogeneous degree, that is, if $f\in (J(X))_0$ then $||f||=0$ and $f$ is called even;  if $f\in (J(X))_1$ then $||f||=1$ and $f$ is called odd.

We remember that a $T_{\mathbb{Z}_2}$-ideal of $J(X)$ is an ideal of $J(X)$ closed under all $\mathbb{Z}_2$-graded endomorphisms
of $J(X)$.
If $W\subseteq J(X)$ we denote by $\langle W \rangle^{T_{\mathbb{Z}_2}}$ the $T_{\mathbb{Z}_2}$-ideal of $J(X)$ generated by $W$, that is, the minimal $T_{\mathbb{Z}_2}$-ideal of $J(X)$ containing $W$. If 
$f\in \langle W \rangle^{T_{\mathbb{Z}_2}}$ we say that $f$ is consequence of the polynomials in $W$.

By using similar arguments as in \cite[Proposition 4.2.3]{Drensky} we state the following:

\begin{proposition}\label{propositconseq}
Let $K$ be a field with $|K|$ elements. 
Let $f \in J(X)$, $w\in X$ and 
\[f=\sum_{i=0}^{d_w} f^{(i)} 
\]
where $f^{(i)}$ is the homogeneous component of $f$ with $\deg_{w} f^{(i)}=i$.
If $d_w < |K|$ then
\[\langle f \rangle^{T_{\mathbb{Z}_2}}=\langle f^{(0)},f^{(1)},\ldots,f^{(d_w)} \rangle^{T_{\mathbb{Z}_2}}.\]
\end{proposition}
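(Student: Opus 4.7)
The plan is to prove the two inclusions of $T_{\mathbb{Z}_2}$-ideals separately. The inclusion
\[
\langle f^{(0)},\ldots,f^{(d_w)}\rangle^{T_{\mathbb{Z}_2}} \supseteq \langle f\rangle^{T_{\mathbb{Z}_2}}
\]
requires real work (one needs $f^{(i)} \in \langle f\rangle^{T_{\mathbb{Z}_2}}$), while the reverse inclusion is immediate because $f = \sum_{i=0}^{d_w} f^{(i)}$ belongs by definition to the $T_{\mathbb{Z}_2}$-ideal generated by the $f^{(i)}$. So the only substantive direction is to show that each homogeneous component $f^{(i)}$ in the variable $w$ lies in $\langle f\rangle^{T_{\mathbb{Z}_2}}$.

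For this I would use the classical homogenization trick, taking care that everything remains graded. For any scalar $\alpha \in K$, the endomorphism of $J(X)$ sending $w \mapsto \alpha w$ and fixing every other generator is a $\mathbb{Z}_2$-graded endomorphism: indeed, $\alpha w$ has the same homogeneous degree as $w$ because $\alpha$ lies in the ground field and the grading is $K$-linear. Applying this endomorphism to $f$ yields
\[
f(\ldots,\alpha w,\ldots) \;=\; \sum_{i=0}^{d_w} \alpha^i f^{(i)} \;\in\; \langle f\rangle^{T_{\mathbb{Z}_2}}.
\]

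Next I would invoke the hypothesis $d_w < |K|$ to choose $d_w+1$ pairwise distinct scalars $\alpha_0,\alpha_1,\ldots,\alpha_{d_w} \in K$. The $(d_w+1)\times(d_w+1)$ Vandermonde matrix $(\alpha_j^{\,i})$ is invertible over $K$, so the linear system
\[
\sum_{i=0}^{d_w} \alpha_j^{\,i}\, f^{(i)} \;=\; f(\ldots,\alpha_j w,\ldots), \qquad j=0,1,\ldots,d_w,
\]
can be solved to express each $f^{(i)}$ as a $K$-linear combination of the elements $f(\ldots,\alpha_j w,\ldots)$, all of which lie in $\langle f\rangle^{T_{\mathbb{Z}_2}}$. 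Hence $f^{(i)} \in \langle f\rangle^{T_{\mathbb{Z}_2}}$ for every $i$, which finishes the proof.

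The only potential obstacle is the verification that scalar rescaling of a single variable is a genuine $\mathbb{Z}_2$-graded endomorphism of the free Jordan algebra $J(X)$; once this is in hand, the argument is purely linear algebra via the Vandermonde determinant and proceeds exactly as in \cite[Proposition 4.2.3]{Drensky}, with the associative free algebra replaced by $J(X)$. Since this replacement affects none of the steps, essentially no new idea beyond the ungraded associative case is needed.
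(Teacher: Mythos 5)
Your proof is correct and is essentially the same argument the paper relies on: the paper simply invokes \cite[Proposition 4.2.3]{Drensky}, which is exactly the Vandermonde/homogenization trick you carry out (substituting $w\mapsto\alpha w$ for $d_w+1$ distinct scalars and inverting the Vandermonde matrix), together with the observation that scalar rescaling of a generator is a $\mathbb{Z}_2$-graded endomorphism. No further comment is needed.
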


Let $K[x_1,\ldots ,x_n]$ be the free commutative algebra, freely generated by $x_1,\ldots ,x_n$ over $K$. 
The next lemma is consequence of \cite[Proposition 4.2.3]{Drensky}.
\begin{lemma} \label{lemapolcomutativo}
Let $K$ be a field  with $|K|\geq q$. Given $f \in K[x_1,\ldots ,x_n]$ write
\[f(x_1, \ldots, x_n) = \sum_{d_1 = 0}^{q-1} \ldots \sum_{d_n = 0}^{q-1} \lambda_{(d_1, \ldots, d_n)}x_1^{d_1}\cdots x_n^{d_n},\]
where $\lambda_{(d_1, \ldots, d_n)} \in K$. If $f(\alpha_1, \ldots, \alpha_n)=0$ for all  
$\alpha_1, \ldots, \alpha_n \in K$ then $\lambda_{(d_1, \ldots, d_n)}=0$ for all $(d_1, \ldots, d_n)$.
\end{lemma}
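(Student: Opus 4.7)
The plan is to prove the lemma by induction on the number of variables $n$, exploiting the classical fact that a nonzero univariate polynomial of degree $d$ over a field has at most $d$ roots. The bound on the partial degrees ($d_i \leq q-1$) together with the size condition $|K|\geq q$ is what lets the root-count argument beat the degree.

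For the base case $n=1$, I would observe that $f(x)=\sum_{d=0}^{q-1}\lambda_d x^d$ is a polynomial of degree at most $q-1$ that vanishes on every element of $K$. Since $|K|\geq q$, $f$ has at least $q$ distinct roots in the field $K$, which forces $f$ to be the zero polynomial (in the usual sense that all its coefficients vanish). This immediately gives $\lambda_d=0$ for all $d$.

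For the inductive step, I would group $f$ according to the last variable, writing
\[
f(x_1,\ldots,x_n)=\sum_{d_n=0}^{q-1} g_{d_n}(x_1,\ldots,x_{n-1})\, x_n^{d_n},
\]
where each $g_{d_n}$ is a polynomial in $x_1,\ldots,x_{n-1}$ with all partial degrees bounded by $q-1$ whose coefficients are precisely the relevant $\lambda_{(d_1,\ldots,d_n)}$. Fixing arbitrary $\alpha_1,\ldots,\alpha_{n-1}\in K$ and specializing, the one-variable polynomial $\sum_{d_n} g_{d_n}(\alpha_1,\ldots,\alpha_{n-1})\, x_n^{d_n}$ vanishes on all of $K$, so by the base case every coefficient $g_{d_n}(\alpha_1,\ldots,\alpha_{n-1})$ is zero. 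Since this holds for all choices of $\alpha_1,\ldots,\alpha_{n-1}\in K$, each $g_{d_n}$ itself satisfies the hypothesis of the lemma in $n-1$ variables, and the inductive hypothesis finishes the argument.

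There is really no serious obstacle here: the only subtlety is to notice that the hypothesis $|K|\geq q$ is used once per induction step (namely, in invoking the base case for the specialized univariate polynomial), and that the bound $d_i\leq q-1$ is preserved when regrouping, so the hypotheses remain in force at each stage. The statement is essentially the Schwartz-Zippel type argument in its simplest, finite-field-friendly form, and the reference to \cite[Proposition 4.2.3]{Drensky} confirms that this is the intended route.
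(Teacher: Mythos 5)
Your proof is correct: the induction on the number of variables, with the base case resting on the fact that a nonzero polynomial of degree at most $q-1$ cannot have $q$ distinct roots in a field, is exactly the standard argument behind \cite[Proposition 4.2.3]{Drensky}, which is all the paper itself offers by way of proof. Nothing is missing; the one subtlety you flag (that the partial-degree bounds are preserved when regrouping by the last variable) is indeed the only point worth checking, and you handle it.
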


\

If $UJ_2=(UJ_2)_0\oplus (UJ_2)_1$ is a $\mathbb{Z}_2$-grading and $f(y_1,\ldots ,y_s,z_1,\ldots ,z_n) \in J(X)$, we remember
that $f$ is a $\mathbb{Z}_2$-graded polynomial identity for $UJ_2$ if 
\[f(Y_1,\ldots ,Y_s,Z_1,\ldots ,Z_n)=0\]
for all $Y_1,\ldots ,Y_s \in (UJ_2)_0$ and $Z_1,\ldots ,Z_n \in (UJ_2)_1$. The set of all 
$\mathbb{Z}_2$-graded polynomial identities of $UJ_2$, denoted by $Id$, is a $T_{\mathbb{Z}_2}$-ideal of $J(X)$.
If the grading is the trivial, then $Id$ was described in \cite[Theorem 19]{faco} when $K$ is infinite with char$(K)\neq 2,3$.
We want to describe $Id$ when the grading is nontrivial. In this case, by Proposition \ref{graddeJ}, it is sufficient to consider
the associative, scalar and classical gradings only.  

If $u,v,w \in J(X)$ we denote by $(u,v,w)$ the associator, that is,
\[(u,v,w) = (uv)w - u(vw).\]
Note that
\begin{equation}\label{igualdadeassociator}
(w,v,u)=-(u,v,w) \ \mbox{and} \ (v,u,w)=(u,v,w)-(u,w,v).
\end{equation}

If $f_1, f_2, \ldots , f_n \in J(X)$ we use the following convention:
\[f_1f_2 \cdots f_{n-1} f_n=(f_1f_2 \cdots f_{n-1})f_n.\]

If $u,v,c,d \in J(X)$ then 
\begin{equation}\label{igualdadebasica}
uvcd + udcv + vdcu = (uv)(cd) + (uc)(vd) + (ud)(vc).
\end{equation}
In fact, the identity (\ref{igualdadebasica}) is true for any Jordan algebra (see \cite[Chapter I.7]{Jacobson}).
Moreover, 
renaming and comparing two expressions from (\ref{igualdadebasica}), we may also obtain
\begin{equation}\label{igualdadebasica2}
uvcd + udcv + vdcu = uvdc + ucdv + vcdu.
\end{equation}

\section{The associative grading }

Let $T_{\text{Ass}}(UJ_2)$ be the set of all $\mathbb{Z}_2$-graded polynomial identities for $UJ_2$ with the associative grading. 
In this section we will describe $T_{\text{Ass}}(UJ_2)$ for any field $K$ of char$(K)=p \neq 2$.

We remember that 
\[UJ_2=\left(UJ_2 \right)_0\oplus \left(UJ_2 \right)_1,\]
where 
\[\left(UJ_2 \right)_0 =span\{e_{11}+e_{22}, \ e_{12}\} \ \mbox{and} \ \left(UJ_2 \right)_1 =span\{e_{11}-e_{22}\}. \]

\begin{lemma}\label{lemaGA}
The polynomials 
\[(y_1,y_2,y_3) , (z_1, y_1, y_2),  (z_1, y_1 ,z_2), (z_1, z_2, z_3) \ \ \text{and} \ \ (z_1z_2, z_3, z_4)  \]
belong to $T_{\text{Ass}}(UJ_2)$.
\end{lemma}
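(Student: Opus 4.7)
The plan is to verify each identity by direct substitution, exploiting the fact that every polynomial on the list is multilinear. Since the identities are linear in each $y_i$ and each $z_j$, it suffices to substitute the basis elements of the homogeneous components: any element of $(UJ_2)_0$ is a linear combination of $\{1,b\}$ and any element of $(UJ_2)_1$ is a scalar multiple of $a$. So the verification reduces to finitely many checks using only the multiplication table
\[
1\circ u = u \ (u\in UJ_2),\qquad a\circ a = 1,\qquad a\circ b = 0,\qquad b\circ b = 0.
\]

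For the identity $(y_1,y_2,y_3)$ I would observe that $(UJ_2)_0 = K\oplus Kb$ is closed under the Jordan product and, since $b\circ b=0$ and $1$ is a unit, it is in fact a commutative and associative subalgebra of $UJ_2$. Hence every associator of three even elements is zero; no case analysis is needed beyond this structural remark.

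For the remaining four polynomials I would fix $z_i=a$ (using multilinearity) and let each $y_i$ range over $\{1,b\}$. The cases involving $1$ are immediate from $1\circ u=u$. The cases where some $y_i=b$ collapse at once because $a\circ b=0$ and $b\circ b=0$ force one of the two summands of the associator to vanish; the other vanishes by the same relation or by commutativity. In particular, $(a,b,a) = (a\circ b)\circ a - a\circ(b\circ a)=0-0=0$, $(a,a,a)=(a\circ a)\circ a - a\circ (a\circ a)=1\circ a - a\circ 1=0$, and, using $a\circ a=1$, the last associator reduces to $(1,a,a)$, which vanishes because $1$ is the identity.

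There is no real obstacle here: the only thing to watch is to treat all basis-element substitutions to avoid missing a case, but the multiplication table is sparse enough that each associator collapses in one or two steps. No appeal to Proposition \ref{propositconseq} or Lemma \ref{lemapolcomutativo} is needed for this lemma; those tools will become relevant later when turning these identities into generators of $T_{\text{Ass}}(UJ_2)$.
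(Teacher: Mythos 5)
Your proposal is correct and matches the paper's approach: the paper proves this lemma by "direct verification," and your multilinearity reduction to the basis elements $1,b,a$ together with the sparse multiplication table $a\circ a=1$, $a\circ b=b\circ b=0$ is exactly that verification carried out explicitly. All the individual computations you display check out.
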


\begin{proof}
The proof consists of a direct verification.
\end{proof}

\begin{notation}
Let $I$ be the $T_{\mathbb{Z}_2}$-ideal of $J(X)$ generated by the polynomials in Lemma \ref{lemaGA}. 
\end{notation}

Define the equivalence relation $\equiv$ on $J(X)$ as follows: if $f,g \in J(X)$ then
\[f\equiv g \Leftrightarrow f+I=g+I.\]

\begin{lemma}\label{lemadosdoisys}
The polynomials 
\[(z_1, y_1, y_2), \ (y_1, z_1, y_2) \ \mbox{and} \ (y_1, y_2, z_1) \]
belong to $I$.
\end{lemma}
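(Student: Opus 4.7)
The statement is essentially a bookkeeping exercise using the associator identities~(\ref{igualdadeassociator}), combined with the fact that $(z_1,y_1,y_2)$ is already among the generators of $I$.

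The plan is to handle the three polynomials one by one, in a well-chosen order. First, $(z_1,y_1,y_2)\in I$ holds trivially since it is one of the generators of $I$ listed in Lemma \ref{lemaGA}. Next, for $(y_1,y_2,z_1)$, I would apply the first identity in~(\ref{igualdadeassociator}), $(w,v,u)=-(u,v,w)$, with $u=z_1$, $v=y_2$, $w=y_1$, to obtain
\[
(y_1,y_2,z_1)=-(z_1,y_2,y_1).
\]
The right-hand side is a $\mathbb{Z}_2$-graded substitution (swap $y_1\leftrightarrow y_2$) into the generator $(z_1,y_1,y_2)$, hence lies in the $T_{\mathbb{Z}_2}$-ideal $I$.

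Finally, for $(y_1,z_1,y_2)$, I would use the second identity in~(\ref{igualdadeassociator}), $(v,u,w)=(u,v,w)-(u,w,v)$, with $u=z_1$, $v=y_1$, $w=y_2$, which yields
\[
(y_1,z_1,y_2)=(z_1,y_1,y_2)-(z_1,y_2,y_1).
\]
Both terms on the right are graded substitutions of the generator $(z_1,y_1,y_2)$, so the difference lies in $I$.

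There is no real obstacle here; the only point to check is that each manipulation is legitimate in the free Jordan algebra $J(X)$ and respects the $\mathbb{Z}_2$-grading, which is clear because in every substitution we keep the $z$-variables mapped to odd variables and the $y$-variables mapped to even variables. The lemma follows.
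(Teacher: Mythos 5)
Your proposal is correct and follows essentially the same argument as the paper: use the anticommutativity and linearization identities in~(\ref{igualdadeassociator}) to express $(y_1,y_2,z_1)$ and $(y_1,z_1,y_2)$ in terms of graded substitutions of the generator $(z_1,y_1,y_2)$. Nothing is missing.
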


\begin{proof} By definition we have $(z_1, y_1, y_2) \in I$. By (\ref{igualdadeassociator}),
\[(y_1, z_1, y_2)=(z_1,y_1,y_2)-(z_1,y_2,y_1)\in I \ \mbox{and} \  (y_1, y_2, z_1)=-(z_1,y_2,y_1)\in I\]
as desired.
\end{proof}

\begin{lemma}\label{lemadosdoiszs}
The polynomials 
\[(z_1z_2, x_1, x_2), \ (x_1, z_1z_2, x_2)  \ \ \text{and} \ \  (x_1,x_2, z_1z_2)\]
belong to $I$, where $x_1$ and $x_2$ are any variables in $Y\cup Z$.
\end{lemma}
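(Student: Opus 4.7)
The plan is to reduce the three statements to the single claim that $(z_1z_2, x_1, x_2)\in I$ for every $x_1, x_2 \in Y\cup Z$. The two relations in (\ref{igualdadeassociator}) give $(x_1, x_2, z_1z_2) = -(z_1z_2, x_2, x_1)$ and $(x_1, z_1z_2, x_2) = (z_1z_2, x_1, x_2) - (z_1z_2, x_2, x_1)$, so the remaining two associator positions follow at once from the first.

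The key observation for the reduced claim is that $z_1z_2$ is an even element: $||z_1z_2|| = 1+1 = 0$ in $\mathbb{Z}_2$. Therefore the map sending the even variable $y_3$ to $z_1z_2$ and fixing every other generator of $X$ is a $\mathbb{Z}_2$-graded endomorphism of $J(X)$, and since $I$ is a $T_{\mathbb{Z}_2}$-ideal, this substitution carries $I$ into itself. I would then split into four subcases according to the parities of $x_1, x_2$. When $x_1, x_2$ are both even, I apply the substitution to the element $(y_3, y_1, y_2)\in I$, which is a relabeling of the generator $(y_1, y_2, y_3)$ from Lemma \ref{lemaGA}. When exactly one of $x_1, x_2$ is odd, I apply it to the elements $(y_3, y_1, z_3)\in I$ or $(y_3, z_3, y_1)\in I$, both of which lie in $I$ by Lemma \ref{lemadosdoisys} after relabeling. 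Finally, when $x_1, x_2$ are both odd, the associator $(z_1z_2, z_3, z_4)$ is itself one of the generators of $I$ listed in Lemma \ref{lemaGA}.

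The argument is essentially formal bookkeeping with the $T_{\mathbb{Z}_2}$-ideal structure, and I do not anticipate any substantive obstacle. The point of the lemma is really that $I$ already contains the extra generator $(z_1z_2, z_3, z_4)$, which together with the associators that vanish whenever two of their slots carry $y$-variables allows one to absorb the even monomial $z_1z_2$ into any position of an associator, irrespective of the parities of the remaining slots.
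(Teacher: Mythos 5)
Your proposal is correct and is essentially the paper's own argument: the paper likewise obtains $(z_1z_2,x_1,x_2)\in\langle (y_1,y_2,y_3),(y_1,z_1,y_2),(y_1,y_2,z_1),(z_1z_2,z_3,z_4)\rangle^{T_{\mathbb{Z}_2}}$ by substituting the even element $z_1z_2$ into a $y$-slot according to the parities of $x_1,x_2$, and then derives the other two associator positions from (\ref{igualdadeassociator}). Your version merely makes the case analysis and the graded-endomorphism substitution explicit.
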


\begin{proof}
By Lemma \ref{lemaGA} and Lemma \ref{lemadosdoisys} we obtain 
\[(z_1z_2, x_1, x_2)\in \langle (y_1,y_2,y_3) , ( y_1,z_1, y_2), ( y_1,y_2, z_1), (z_1z_2, z_3, z_4)\rangle^{T_{\mathbb{Z}_2}}\subseteq I.\]
By $(z_1z_2, x_1, x_2)\in I$ and (\ref{igualdadeassociator}) we obtain  $(x_1, z_1z_2, x_2), \  (x_1,x_2, z_1z_2)\in I$.
\end{proof}

\begin{lemma}\label{novaident}
The polynomial
\[((y_1y_2)z_1)z_2 - ((y_1z_1)z_2)y_2 - ((y_2z_1)z_2)y_1 + (z_1z_2)(y_1y_2)\]
belongs to $I$.
\end{lemma}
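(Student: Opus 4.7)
The plan is to apply the basic Jordan identity (\ref{igualdadebasica}) twice, with complementary substitutions, and then reduce the result using only the associators already known to lie in $I$ via Lemmas \ref{lemaGA}, \ref{lemadosdoisys}, and \ref{lemadosdoiszs}. Let $P$ denote the polynomial in the statement.

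First, I would substitute $u=y_1$, $v=y_2$, $c=z_2$, $d=z_1$ into (\ref{igualdadebasica}). Reading the left-hand side with the left-normed convention gives
\[((y_1y_2)z_2)z_1 + ((y_1z_1)z_2)y_2 + ((y_2z_1)z_2)y_1 = (y_1y_2)(z_1z_2) + (y_1z_2)(y_2z_1) + (y_1z_1)(y_2z_2).\]
Solving for $((y_1z_1)z_2)y_2 + ((y_2z_1)z_2)y_1$ and substituting into $P$, the Jordan commutativity $(y_1y_2)(z_1z_2)=(z_1z_2)(y_1y_2)$ cancels those two terms and leaves
\[P = ((y_1y_2)z_1)z_2 + ((y_1y_2)z_2)z_1 - (y_1z_1)(y_2z_2) - (y_1z_2)(y_2z_1).\]
This is an \emph{equality} in $J(X)$, not merely a congruence mod $I$.

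Next, I would substitute $u=z_1$, $v=z_2$, $c=y_1$, $d=y_2$ into (\ref{igualdadebasica}), obtaining
\[((z_1z_2)y_1)y_2 + ((z_1y_2)y_1)z_2 + ((z_2y_2)y_1)z_1 = (z_1z_2)(y_1y_2) + (z_1y_1)(z_2y_2) + (z_1y_2)(z_2y_1).\]
Now I would reduce each left-hand term modulo $I$: the first becomes $(z_1z_2)(y_1y_2)$ via $(z_1z_2,y_1,y_2)\in I$ (Lemma \ref{lemadosdoiszs}); the second becomes $(z_1(y_1y_2))z_2=((y_1y_2)z_1)z_2$ using $(z_1,y_2,y_1)\in I$ (Lemma \ref{lemadosdoisys}) together with commutativity; similarly the third becomes $((y_1y_2)z_2)z_1$. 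On the right, commutativity rewrites $(z_1y_1)(z_2y_2)=(y_1z_1)(y_2z_2)$ and $(z_1y_2)(z_2y_1)=(y_1z_2)(y_2z_1)$. Cancelling $(z_1z_2)(y_1y_2)$ on both sides yields exactly $P\equiv 0\pmod I$.

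The main obstacle is the bookkeeping for the reassociations in the second step: only associators of very specific shapes are known to be in $I$, so each rewrite from $((z_iy_j)y_k)z_l$ to $((y_jy_k)z_i)z_l$ must be justified by an appropriately relabeled instance of a generator of $I$ combined with Jordan commutativity. Once the two applications of (\ref{igualdadebasica}) are correctly aligned, no new identity beyond those already established in $I$ is needed, and the two sides of the computation match term-by-term.
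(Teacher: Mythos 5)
Your proof is correct, and all the ingredients you invoke (Lemmas \ref{lemadosdoisys} and \ref{lemadosdoiszs}, commutativity, and (\ref{igualdadebasica})) are available before this lemma, so there is no circularity. The first half coincides exactly with the paper's computation: the paper substitutes $u=y_1$, $v=z_1$, $c=z_2$, $d=y_2$ into (\ref{igualdadebasica}), which produces the same three left-normed terms as your substitution $u=y_1$, $v=y_2$, $c=z_2$, $d=z_1$, and both arrive at the identity $P=((y_1y_2)z_1)z_2+((y_1y_2)z_2)z_1-(y_1z_1)(y_2z_2)-(y_1z_2)(y_2z_1)$ in $J(X)$. The second halves differ: the paper rewrites each cross term $(z_iy_1)(z_jy_2)$ as $((z_iy_1)y_2)z_j$ at the cost of an associator $(z_iy_1,y_2,z_j)$, which is a graded substitution instance of the generator $(z_1,y_1,z_2)$, and then recognizes the remaining differences as $(y_2,y_1,z_i)z_j$ with $(y_2,y_1,z_i)\in I$ by Lemma \ref{lemadosdoisys}; you instead apply (\ref{igualdadebasica}) a second time with the roles of the $y$'s and $z$'s interchanged and reduce that identity term-by-term modulo $I$. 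Your route uses Lemma \ref{lemadosdoiszs} (i.e.\ $(z_1z_2,x_1,x_2)\in I$) where the paper does not, and conversely avoids any appeal to the generator $(z_1,y_1,z_2)$; neither version is materially longer, so the trade-off is only in which generators of $I$ carry the load.
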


\begin{proof}
Let $f = ((y_1y_2)z_1)z_2 - ((y_1z_1)z_2)y_2 - ((y_2z_1)z_2)y_1 + (z_1z_2)(y_1y_2)$. 
If $u=y_1, \ v=z_1, \ c=z_2$ and $d=y_2$ in (\ref{igualdadebasica}) we obtain
\begin{align*}
f & = ((y_1y_2)z_1)z_2 + ((y_1y_2)z_2)z_1 - (z_1y_1)(z_2y_2) - (z_2y_1)(z_1y_2) \\
  & = ((y_1y_2)z_1)z_2 + ((y_1y_2)z_2)z_1 -  ((z_1y_1)y_2)z_2 - ((z_2y_1)y_2)z_1 + (z_1y_1, y_2, z_2) + (z_2y_1, y_2, z_1) \\
  & = (y_2, y_1, z_1)z_2 + (y_2, y_1, z_2)z_1 + (z_1y_1, y_2, z_2) + (z_2y_1, y_2, z_1).
\end{align*}
By Lemma \ref{lemadosdoisys} and $(z_1, y_1, z_2) \in I$ we conclude that $f \in I$. 
\end{proof}


\begin{lemma}\label{lemavariaveiscomutameassociam}
The subalgebras of $J(X)/I$ generated by the sets 
\[Y+I=\{y+I \ : \ y\in Y \} \ \mbox{ and } \ Z+I=\{z+I \ : \ z\in Z \}\]
 are commutative and associative. 
\end{lemma}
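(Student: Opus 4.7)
The plan. The statement has two parts. Commutativity is immediate: $J(X)$ is a Jordan algebra, and commutativity passes to quotients and subalgebras. The substantive part is associativity.

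For the $Y$-subalgebra this is routine: every element lifts to an even polynomial in $J(Y) \subset J(X)$, and substituting such lifts into the identity $(y_1,y_2,y_3)\in I$ from Lemma \ref{lemaGA} via the $T_{\mathbb{Z}_2}$-ideal property shows that any associator formed from these elements lies in $I$.

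For the $Z$-subalgebra, denote it by $B = B_0\oplus B_1$ and reduce, using multilinearity of the associator together with the parity decomposition, to checking that $(u,v,w)\in I$ whenever $u,v,w$ are homogeneous lifts that are Jordan monomials in $Z$. Split into the eight cases indexed by the parity triple $(|u|,|v|,|w|)\in\{0,1\}^3$. Five of them are immediate by substituting the lifts into a suitable generator of $I$: (ooo) uses $(z_1,z_2,z_3)$; (eee) uses $(y_1,y_2,y_3)$; and the three cases with exactly one odd slot — (oee), (eoe), (eeo) — use, respectively, $(z_1,y_1,y_2)$, $(y_1,z_1,y_2)$, and $(y_1,y_2,z_1)$, provided by Lemma \ref{lemaGA} and Lemma \ref{lemadosdoisys}. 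The three remaining cases (eoo), (oeo), (ooe), where exactly one slot is even, require the following

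\textbf{Sub-claim.} \emph{Every even Jordan monomial $m$ in $Z$ of degree $\geq 2$ is congruent modulo $I$ to a product $g_1 g_2$ with $g_1,g_2$ odd Jordan monomials in $Z$.}

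Granting the sub-claim, one replaces the even slot by its representative $g_1 g_2$ modulo $I$ (using that $I$ is an ideal of $J(X)$) and then applies the appropriate clause of Lemma \ref{lemadosdoiszs} — $(z_1 z_2,x_1,x_2)$, $(x_1,z_1 z_2,x_2)$, or $(x_1,x_2,z_1 z_2)$ — to conclude that the associator lies in $I$.

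I would prove the sub-claim by induction on $\deg m$. The base $\deg m = 2$ is trivial since $m = z_i z_j$. For the inductive step, consider the top-level split $m = m_1 m_2$. Since $m$ is even, the two factors have the same parity. If both are odd, $m$ already has the required form. If both are even, then each has degree $\geq 2$, so by induction $m_1\equiv g_1 g_2$ and $m_2 \equiv h_1 h_2 \pmod I$ with $g_i,h_j$ odd; hence $m\equiv (g_1 g_2)(h_1 h_2) \pmod I$. Substituting $z_1,z_2,z_3,z_4\mapsto g_1,g_2,h_1,h_2$ in the generator $(z_1 z_2,z_3,z_4)\in I$ of $I$ gives
\[(g_1 g_2)(h_1 h_2) \equiv ((g_1 g_2) h_1) h_2 \pmod I,\]
which is of the desired form $F_1 F_2$ with $F_1 = (g_1 g_2) h_1$ odd and $F_2 = h_2$ odd.

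The main obstacle is precisely this even-even sub-case of the sub-claim: one cannot directly apply Lemma \ref{lemadosdoiszs} to an (even)$\cdot$(even) top split, and must instead invoke the generator $(z_1 z_2, z_3, z_4)\in I$ explicitly to re-associate such a product into an (odd)$\cdot$(odd) product before the rest of the argument can go through.
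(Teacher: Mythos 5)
Your proof is correct, and its skeleton matches the paper's: commutativity is inherited from $J(X)$, the $Y$-subalgebra is handled by the generator $(y_1,y_2,y_3)$, and associativity of the $Z$-subalgebra reduces by trilinearity of the associator to an eight-fold parity case analysis on monomials in $Z$, most cases being immediate substitutions into generators of $I$. Where you genuinely diverge is in the hard cases. The paper disposes of the odd--even--odd case directly from the generator $(z_1,y_1,z_2)$ (a shortcut you do not use, though your detour through the sub-claim is equally valid), and for the even--odd--odd case it runs the induction \emph{inside} the associator: writing the even factor as $f_1'f_1''$ and re-associating $((f_1'f_1'')f_2)f_3$ step by step via $(y_1,y_2,z_1)$, two applications of the induction hypothesis, and $(y_1,y_2,y_3)$, with the odd--odd--even case then obtained by antisymmetry. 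You instead factor the induction out into a normal-form statement --- every even monomial in $Z$ is congruent modulo $I$ to a product of two odd monomials, the even--even split being re-associated via the generator $(z_1z_2,z_3,z_4)$ --- after which all three remaining parity cases collapse uniformly to Lemma \ref{lemadosdoiszs}. Your packaging is more modular and treats the three cases symmetrically; the paper's is more direct but needs the separate antisymmetry step and a slightly more delicate chain of congruences. Both arguments are driven by the same generators of $I$ and are of essentially equal depth, so this is a sound alternative organization rather than a different proof in substance.
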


\begin{proof}
Let $A_Y$ and $A_Z$ be the subalgebras of $J(X)/I$ generated by the sets $Y+I$ and $Z+I$, respectively. The algebra
$J(X)$ is commutative, thus $A_Y$ and $A_Z$ are commutative too. 

Since  $(y_1,y_2,y_3) \in I$ we have that $A_Y$ is associative. 

Finally, let $f_1,f_2,f_3$ polynomials in the variables $z_1,z_2,  \ldots $ .
Note that $f_i$ is a sum of even and odd elements. We shall prove that $f=(f_1,f_2,f_3) \in I$. In this case, it is sufficient to suppose that each $f_i$ is either even or odd.
The next table shows that $f$ is consequence of $g \in I$.
\begin{center}
\begin{tabular}{|c|c|c|c|}
\hline 
$f_1$ & $f_2$ & $f_3$ &  $g$ \\ 
\hline 
Even & Even & Even & $(y_1,y_2,y_3)$ \\ 
\hline 
Even & Even & Odd & $(y_1,y_2,z_1)$ \\ 
\hline 
Even & Odd & Even & $(y_1,z_1 ,y_2)$ \\ 
\hline 
Even & Odd & Odd & $(z_1z_2, x_1, x_2)$, $(y_1,y_2,z_1)$ and $(y_1,y_2,y_3)$ \\ 
\hline 
Odd & Even & Even & $(z_1, y_1, y_2)$ \\ 
\hline 
Odd & Even & Odd & $(z_1, y_1 ,z_2)$ \\ 
\hline 
Odd & Odd & Even & $(z_1z_2, x_1, x_2)$, $(y_1,y_2,z_1)$ and $(y_1,y_2,y_3)$ \\ 
\hline 
Odd & Odd & Odd & $(z_1, z_2, z_3)$ \\ 
\hline 
\end{tabular} 
\end{center}

We will prove the fourth case. Let $f_1,f_2,f_3$ be monomials even, odd,  odd respectively. We will show by induction on deg$(f_1)$ that $f = (f_1,f_2,f_3)\in I$.
If deg$(f_1)=2$ then $f$ is consequence of $(z_1z_2, x_1, x_2)\in I$.  Suppose deg$(f_1) \geq 4$ and write $f_1 = f_1' f_1''$ where $f_1'$ and $f_1''$ are monomials with degree $<$ deg$(f_1)$. If $f_1'$ and $f_1''$ are odd, then $f$ is consequence of $(z_1z_2, x_1, x_2)\in I$. If $f_1'$ and $f_1''$ are even we obtain the equivalences below as follows:  by $(y_1,y_2,z_1) \in I$ we obtain ($\triangle 1$); by induction hypothesis we obtain ($\triangle 2$); by $(y_1,y_2,y_3) \in I$ we obtain ($\triangle 3$).
  \begin{align*}
((f_1'f_1'')f_2)f_3 & \overbrace{\equiv}^{(\triangle 1)}  (f_1'(f_1''f_2))f_3 \overbrace{\equiv}^{(\triangle 2)} f_1'((f_1''f_2)f_3) \overbrace{\equiv}^{( \triangle 2)} f_1'(f_1''(f_2f_3)) \overbrace{\equiv}^{(\triangle 3)} (f_1'f_1'')(f_2f_3).
\end{align*}
Thus $(f_1'f_1'',f_2,f_3) \equiv 0$ as desired.

Since $(f_1,f_2,f_3) = - (f_3,f_2,f_1)$ we obtain the seventh case. The other cases are trivial.
Therefore the subalgebra $A_Z$ is associative.
\end{proof}

\begin{lemma}\label{lemaZ}
Let $f, g \in J(X)$ and  $Z' = z_1 z_2 \cdots  z_s$, where $s$ is even. 
Then 
\[(fZ')g \equiv (fg)Z' .\]
\end{lemma}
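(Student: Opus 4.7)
The plan is to exploit the left-normed convention. Since $s$ is even, I can write $Z' = z_1 z_2 \cdots z_s = U \cdot z_s$, where $U = z_1 z_2 \cdots z_{s-1}$ is a homogeneous polynomial of odd degree (because $s-1$ is odd). In other words, $Z'$ is already expressed as the Jordan product of two odd polynomials, and so Lemma \ref{lemadosdoiszs} can be applied directly to $Z'$, bypassing any induction on $s$.

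The first step is to establish that $(h_1, h_2, Z') \in I$ for every $h_1, h_2 \in J(X)$. Lemma \ref{lemadosdoiszs} gives $(x_1, x_2, z_1 z_2) \in I$ whenever $x_1, x_2$ are variables (of either parity) and $z_1, z_2$ are two odd variables. Since $I$ is a $T_{\mathbb{Z}_2}$-ideal, I may substitute the odd polynomials $U$ and $z_s$ for those two $z$-variables, and homogeneous polynomials of matching parity for $x_1, x_2$. Decomposing $h_1, h_2$ into their even and odd components and summing then yields $(h_1, h_2, Z') \in I$ for arbitrary $h_1, h_2 \in J(X)$.

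The second step is essentially a one-line computation. Using the commutativity of $J(X)$ (so $Z'f = fZ'$ and $Z'(fg) = (fg)Z'$), the definition of the associator gives
\[
(Z', f, g) = (Z' f) g - Z'(fg) = (fZ') g - (fg) Z'.
\]
From identity (\ref{igualdadeassociator}) one has $(Z', f, g) = -(g, f, Z')$, which lies in $I$ by the first step applied with $h_1 = g$ and $h_2 = f$. Hence $(fZ')g - (fg)Z' \in I$, that is, $(fZ')g \equiv (fg)Z'$, as desired. The only mildly delicate point in the whole argument is the extension of Lemma \ref{lemadosdoiszs} to arbitrary, possibly non-homogeneous $h_1, h_2$ in the first step, but this is immediate from the linearity of $I$ together with the parity decomposition.
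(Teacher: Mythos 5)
Your proof is correct and follows essentially the same route as the paper: both decompose $Z'$ as the product of the odd monomial $z_1\cdots z_{s-1}$ with the odd variable $z_s$ so that Lemma \ref{lemadosdoiszs} applies after a graded substitution (and parity decomposition of $f,g$). The only difference is presentational — you package the conclusion as the single associator identity $(fZ')g-(fg)Z'=(Z',f,g)=-(g,f,Z')\in I$, whereas the paper chains through intermediate equivalences using both $(x_1,z_1z_2,x_2)$ and $(x_1,x_2,z_1z_2)$.
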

\begin{proof}
Denote $Z''=z_1 z_2 \cdots  z_{s-1}$. By Lemma \ref{lemadosdoiszs} we have 
\begin{align*}
(fZ')g & \equiv (f(Z''z_s))g \equiv f((Z''z_s)g) \equiv f(g(Z''z_s)) \equiv (fg)(Z''z_s) \equiv (fg)Z'.
\end{align*}
The lemma is proved.
\end{proof}

Denote by $Sym(s)$ the symmetric group of $\{1,\ldots ,s\}$.

\begin{lemma}\label{lemaYz}
If $s$ is odd and $\sigma \in Sym(s)$, then
\[(y_1z_{\sigma(1)})(z_{\sigma(2)} \cdots z_{\sigma(s)}) \equiv (y_1z_1)(z_2 \cdots z_s).\]
\end{lemma}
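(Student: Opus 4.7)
The plan is to collapse both sides of the stated equivalence to the common canonical form $y_1(z_1 z_2 \cdots z_s)$ modulo $I$, after which the equivalence is immediate. Writing $W = z_{\sigma(2)} z_{\sigma(3)} \cdots z_{\sigma(s)}$, the hypothesis that $s$ is odd ensures $W$ is a product of an even number of elements of $Z$, which is exactly what allows Lemma~\ref{lemaZ} to apply. (The case $s=1$ is vacuous, since $Sym(1)$ is trivial.)

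The key step is to establish the associator identity
\[
(y_1 z_{\sigma(1)})\, W \;\equiv\; y_1\bigl(z_{\sigma(1)} W\bigr) \pmod I,
\]
which I would prove in three short moves using only the commutativity of the Jordan product and Lemma~\ref{lemaZ}:
\begin{align*}
y_1\bigl(z_{\sigma(1)} W\bigr) \;=\; \bigl(z_{\sigma(1)} W\bigr)\, y_1 \;\equiv\; \bigl(z_{\sigma(1)} y_1\bigr)\, W \;=\; \bigl(y_1 z_{\sigma(1)}\bigr)\, W,
\end{align*}
where the outer equalities are Jordan commutativity and the middle equivalence is Lemma~\ref{lemaZ} applied with $f = z_{\sigma(1)}$, $Z' = W$ (of even length), and $g = y_1$.

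Once this is in hand the proof concludes quickly. By Lemma~\ref{lemavariaveiscomutameassociam}, the subalgebra of $J(X)/I$ generated by the images of $Z$ is commutative and associative, so $z_{\sigma(1)} W = z_{\sigma(1)} z_{\sigma(2)} \cdots z_{\sigma(s)} \equiv z_1 z_2 \cdots z_s$ for any $\sigma$. Therefore $(y_1 z_{\sigma(1)})(z_{\sigma(2)} \cdots z_{\sigma(s)}) \equiv y_1(z_1 z_2 \cdots z_s)$, an expression manifestly independent of $\sigma$; specializing to $\sigma = \mathrm{id}$ gives $(y_1 z_1)(z_2 \cdots z_s) \equiv y_1(z_1 \cdots z_s)$, and chaining the two equivalences closes the proof. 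The only conceptual hurdle is noticing that Lemma~\ref{lemaZ}, together with Jordan commutativity, effectively says that an even-length $z$-product behaves like a central element of $J(X)/I$; once this is spotted, no further identities beyond those already proved are required.
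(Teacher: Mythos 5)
Your proof is correct and follows essentially the same route as the paper: reduce both sides to the canonical form $y_1(z_1\cdots z_s)$ using Lemma~\ref{lemaZ} (valid since $W$ has even length when $s$ is odd) together with the commutativity and associativity of the $Z$-generated subalgebra from Lemma~\ref{lemavariaveiscomutameassociam}. The extra detail you supply — deriving $(y_1 z_{\sigma(1)})W \equiv y_1(z_{\sigma(1)}W)$ from Lemma~\ref{lemaZ} via Jordan commutativity — is exactly the implicit step in the paper's first equivalence.
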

\begin{proof}
By Lemma \ref{lemaZ} and Lemma \ref{lemavariaveiscomutameassociam} we have
\begin{align*}
(y_1z_{\sigma(1)})(z_{\sigma(2)} \cdots z_{\sigma(s)}) & \equiv y_1(z_{\sigma(1)}(z_{\sigma(2)} \cdots z_{\sigma(s)}))  \equiv y_1(z_1(z_2 \cdots z_s)) \\
 & \equiv  (y_1z_1)(z_2 \cdots z_s).
\end{align*}
The proof  is complete.
\end{proof}

\begin{lemma}\label{lemaYzz}
If $s$ is even and $\sigma \in Sym(s)$, then
\[(((y_1z_{\sigma(1)})z_{\sigma(2)})y_2)(z_{\sigma(3)} \cdots z_{\sigma(s)}) \equiv (((y_1z_1)z_2)y_2)(z_3 \cdots z_s).\]
\end{lemma}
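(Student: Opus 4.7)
The plan is to reduce both sides to a common canonical form using Lemmas \ref{lemaZ} and \ref{lemaYz}.

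First I would apply Lemma \ref{lemaZ} twice to the left-hand side, with the even-length block $Z' = z_{\sigma(3)}\cdots z_{\sigma(s)}$ (of length $s-2$): once with $f = (y_1z_{\sigma(1)})z_{\sigma(2)}$ and $g = y_2$ (to pull $y_2$ outside), and again with $f = y_1z_{\sigma(1)}$ and $g = z_{\sigma(2)}$. The result is $(((y_1z_{\sigma(1)})(z_{\sigma(3)}\cdots z_{\sigma(s)}))z_{\sigma(2)})y_2$, and the same two moves applied to the right-hand side give $(((y_1z_1)(z_3\cdots z_s))z_2)y_2$. Hence the task reduces to proving
\[((y_1z_{\sigma(1)})(z_{\sigma(3)}\cdots z_{\sigma(s)}))z_{\sigma(2)} \equiv ((y_1z_1)(z_3\cdots z_s))z_2.\]

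Next I would exploit the fact that the inner factor $(y_1z_{\sigma(1)})(z_{\sigma(3)}\cdots z_{\sigma(s)})$ involves $s-1$ (odd) $z$-variables, so Lemma \ref{lemaYz} tells me it is invariant modulo $I$ under any permutation of the indices $\{\sigma(1),\sigma(3),\ldots,\sigma(s)\}=\{1,\ldots,s\}\setminus\{\sigma(2)\}$. Thus the reduced expression depends only on the outer index $\sigma(2)$. To relate expressions with different outer indices I would establish the auxiliary outer-swap identity
\[((y_1P)z_a)z_b \equiv ((y_1P)z_b)z_a\]
for any even-length product $P$ of $z$'s: using Lemma \ref{lemaZ} twice on each side, both expressions rewrite as $((y_1z_a)z_b)P$ and $((y_1z_b)z_a)P$ respectively, and these normal forms agree because $(y_1z_a)z_b - (y_1z_b)z_a$ unfolds (via commutativity of the Jordan product) to the associator $(z_a,y_1,z_b)$, which lies in $I$ by Lemma \ref{lemaGA}.

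Combining these moves, the expression $((y_1z_{\sigma(1)})(z_{\sigma(3)}\cdots z_{\sigma(s)}))z_{\sigma(2)}$ is invariant modulo $I$ under (i) every permutation fixing position $2$ (from Lemma \ref{lemaYz}) and (ii) the transposition of positions $1$ and $2$ (from the outer swap, after one more use of Lemma \ref{lemaZ} to bring the expression into the form $((y_1P_\sigma)z_{\sigma(1)})z_{\sigma(2)}$). Since (i) and (ii) together generate all of $\mathrm{Sym}(s)$, the expression is $\sigma$-invariant and must equal its value at $\sigma=\mathrm{id}$, completing the proof. The main obstacle will be lifting the basic swap $(y_1z_a)z_b \equiv (y_1z_b)z_a$ to the outer swap with an arbitrary even block $P$ inserted after $y_1$, but this is a routine bookkeeping exercise using Lemma \ref{lemaZ}, so the whole argument should go through cleanly.
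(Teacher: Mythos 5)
Your proposal is correct and takes essentially the same route as the paper: both normalize the monomial with Lemma \ref{lemaZ} into forms where an $(s-1)$-element block of odd variables can be freely reordered, and both obtain the missing transposition from the associator $(z_1,y_1,z_2)\in I$, so that the partial symmetries generate all of $Sym(s)$. The only cosmetic difference is that the paper realizes the second partial symmetry via an alternative Lemma \ref{lemaZ} rewriting and treats $s=2$ as a separate base case, whereas you make the outer swap $((y_1P)z_a)z_b\equiv((y_1P)z_b)z_a$ explicit and handle $s=2$ uniformly.
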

\begin{proof}
Suppose $s=2$. Since $(z_1, y_1, z_2) \in I$, we obtain
\[((y_1z_1)z_2)y_2 \equiv ((z_1 y_1)z_2)y_2 \equiv (z_1(y_1z_2))y_2 \equiv
((y_1z_2)z_1)y_2.\]
If $s \geq 4$, 
denote $Z'=z_{\sigma(3)} \cdots z_{\sigma(s)}$. By Lemma \ref{lemaZ} we have 
\begin{align*}
 (((y_1z_{\sigma(1)})z_{\sigma(2)})y_2)Z' & \equiv  ((y_1z_{\sigma(1)})(z_{\sigma(2)}Z'))y_2                  
\end{align*}
and also
\begin{align*}
 (((y_1z_{\sigma(1)})z_{\sigma(2)})y_2)Z' & \equiv  ((y_1(z_{\sigma(1)}Z'))z_{\sigma(2)})y_2.
\end{align*}
Now we use the Lemma \ref{lemavariaveiscomutameassociam} to order the variables $z_1,\ldots , z_s$.
\end{proof}

\begin{lemma}\label{lemageradorGA}
Let $S$ be the subset of $J(X)$ formed by all polynomials
\begin{enumerate}
\item[(a)] $Y'Z'$,
\item[(b)] $(Y'z_{j_1})Z'$,
\item[(c)] $(((y_iz_{j_1})z_{j_2})Y')Z'$,
\end{enumerate}
where $Y'=y_{i_1} \cdots y_{i_r}$ with $r\geq 0$ and $i_1 \leq \ldots \leq i_r$;  $Z'=z_{l_1} \cdots z_{l_s}$ with
$s\geq 0$ even and  $j_1 \leq j_2 \leq l_1 \leq  \ldots \leq l_s$. 
Then the quotient vector space $J(X)/I$ is spanned by the set of all elements $g+I$ where $g\in S$.

\end{lemma}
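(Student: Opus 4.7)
The plan is to proceed by strong induction on the total degree of a monomial $m\in J(X)$, showing that $m+I$ lies in the span of $\{g+I : g\in S\}$. Since $J(X)$ is spanned by monomials, this suffices. The base cases of degree $\leq 2$ are immediate: a single variable $y_i$ is type (a) with empty $Z'$, a single $z_j$ is type (b) with empty $Y'$ and $Z'$; $y_iy_j$ reorders to type (a) by Lemma \ref{lemavariaveiscomutameassociam}; $y_iz_j$ is type (b); and $z_iz_j$ is type (a) with $s=2$ after reordering via Lemma \ref{lemavariaveiscomutameassociam}.

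For the inductive step, I would write $m=m_1\cdot m_2$ with $\deg m_1,\deg m_2<\deg m$. By the induction hypothesis each $m_i$ is equivalent modulo $I$ to a linear combination of elements of $S$, so the task reduces to showing $g_1\cdot g_2$ is an $S$-combination whenever $g_1,g_2\in S$. This is a case analysis on the types (a)/(b)/(c) of $g_1$ and $g_2$. The governing observation is that an even-length $z$-product acts centrally modulo $I$: by Lemma \ref{lemadosdoiszs} together with Lemma \ref{lemaZ}, any factor $Z'=z_{l_1}\cdots z_{l_s}$ with $s$ even can be slid past brackets to the extreme right. Combined with the full commutativity--associativity of $A_Y$ and $A_Z$ from Lemma \ref{lemavariaveiscomutameassociam}, and the standardization Lemmas \ref{lemaYz} and \ref{lemaYzz}, this lets me sort the $y$'s and $z$'s into the nondecreasing order required by $S$.

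The cases where both factors $g_1,g_2$ together contain only $y$'s, or only $z$'s, or have their $z$-variables already distributed so the parity works out cleanly, follow directly from the tools above and land in type (a) or type (b). The delicate case is when the multiplication creates an even total number of $z$'s but with \emph{two} unpaired $z$-variables trapped against distinct $y$-clusters — the prototypical situation being the product of two type (b) elements, which after the central-sliding reduces to a pattern of the form $((y_iy_j)z_kz_\ell)Z'$. Here I would apply Lemma \ref{novaident}, which rewrites $((y_1y_2)z_1)z_2$ as $((y_1z_1)z_2)y_2+((y_2z_1)z_2)y_1-(z_1z_2)(y_1y_2)$, producing exactly one type (c) plus one type (c) plus one type (a) summand. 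A similar manoeuvre covers products involving a type (c) factor whose ambient $z$'s would otherwise pile up in forbidden positions.

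The main obstacle, as indicated, is this last step: keeping careful track of which single $z$ sits in the distinguished inner bracket $(y_iz_{j_1})z_{j_2}$ of a type (c) monomial versus which $z$'s belong to the outer $Z'$, and verifying that every possible pattern produced by $g_1\cdot g_2$ can be massaged, via repeated applications of Lemma \ref{novaident} and the sliding lemmas, into a standard $S$-form. I would expect to close this by a secondary induction on the number of $z$-variables appearing outside the innermost $z_{j_1}z_{j_2}$ pair, with Lemma \ref{novaident} providing the inductive step that reduces the count while possibly producing type (a) remainders. Once every product $g_1\cdot g_2$ is shown to lie in the $S$-span modulo $I$, the conclusion follows.
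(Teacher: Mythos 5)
Your overall strategy coincides with the paper's: induct on the degree of a monomial, write it as a product of two shorter monomials, reduce to a case analysis on products of the three normal forms, slide even-length $z$-strings to the far right via Lemmas \ref{lemaZ} and \ref{lemadosdoiszs}, sort with Lemmas \ref{lemavariaveiscomutameassociam}, \ref{lemaYz}, \ref{lemaYzz}, and invoke Lemma \ref{novaident} in the one genuinely delicate situation. You have also correctly located that delicate situation (the product of two type (b) elements, and more generally any product creating a pattern $(((Y'z_{j_1})z_{j_2})Y'')Z'$ with a $y$-cluster $Y'$ of length greater than one trapped against the inner $z$-pair).

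However, the secondary induction you propose to close that case does not work as stated. You suggest inducting on ``the number of $z$-variables appearing outside the innermost $z_{j_1}z_{j_2}$ pair, with Lemma \ref{novaident} providing the inductive step that reduces the count.'' Lemma \ref{novaident} involves exactly the two variables $z_1,z_2$ in every summand and leaves the number of $z$'s untouched; it cannot drive such an induction. Moreover the outer $z$'s are not the obstruction at all: since the outer $Z'$ has even length, Lemma \ref{lemaZ} already lets you move all of them to the extreme right in one stroke. The quantity that must be reduced is the length $m$ of the $y$-monomial $Y'=y_{k_1}\cdots y_{k_m}$ sitting inside $((Y'z_{j_1})z_{j_2})$. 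The paper's Claim~1 inducts on exactly this $m$: substituting $y_1\mapsto y_{k_1}\cdots y_{k_{m-1}}$ and $y_2\mapsto y_{k_m}$ in Lemma \ref{novaident} peels one $y$ off the inner cluster, producing one term with inner cluster of length $m-1$ (handled by the induction hypothesis), one genuine type (c) term, and one type (a) term. With that correction your argument closes; without it, the step you yourself flag as the main obstacle remains open.
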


\begin{proof}
 Let $A$, $B$ and $C$ be the sets of all elements $g+I$ where $g$ is in (a), (b) and (c), respectively. 
Denote $D=A\cup B \cup C$. 

\

\noindent {\bf Claim 1.} If $Y' = y_{k_1}y_{k_2} \cdots y_{k_m}$, $Y'' = y_{b_1}y_{b_2} \cdots y_{b_t}$ and 
$Z'=z_{l_1} \cdots z_{l_s}$ with
$s\geq 0$ even then $(((Y'z_{j_1})z_{j_2})Y'')Z'+I \in span D$.

\noindent \emph{Proof of the Claim 1.} The proof is by induction on $m$. If $m=0$, by 
Lemmas \ref{lemaZ} and  \ref{lemavariaveiscomutameassociam} we obtain
$((z_{j_1}z_{j_2})Y'')Z'+I \in A \subset span D$. If $m=1$, by  Lemmas \ref{lemavariaveiscomutameassociam} and \ref{lemaYzz} we obtain $(((y_{k_1}z_{j_1})z_{j_2})Y'')Z'+I \in C \subset span D$.

Suppose $m\geq 2$. 
By $(y_1,y_2,y_3) \equiv 0$ and Lemma \ref{novaident} we have
\begin{align*}
(((Y'z_{j_1})z_{j_2})Y'')Z' & \equiv ((((y_{k_1}y_{k_2} \cdots y_{k_m})z_{j_1})z_{j_2})Y'')Z'\\
& \equiv  (((((y_{k_1} \cdots y_{k_{m-1}})z_{j_1})z_{j_2})y_{k_m})Y'')Z'\\                           
                            & \ \ + ((((y_{k_m}z_{j_1})z_{j_2})(y_{k_1} \cdots y_{k_{m-1}}))Y'')Z' \\
                             & \ \ - (((z_{j_1}z_{j_2})(y_{k_1}y_{k_2} \cdots y_{k_m}))Y'')Z'\\
                            & \equiv ((((y_{k_1} \cdots y_{k_{m-1}})z_{j_1})z_{j_2})(y_{k_m}Y''))Z' \\
                            & \ \ +(((y_{k_m}z_{j_1})z_{j_2})((y_{k_1} \cdots y_{k_{m-1}})Y''))Z' \\
                            & \ \ - ((y_{k_1}y_{k_2} \cdots y_{k_m})Y'')((z_{j_1}z_{j_2})Z').
\end{align*}
By Lemmas \ref{lemavariaveiscomutameassociam} and \ref{lemaYzz} it follows that $(((y_{k_m}z_{j_1})z_{j_2})((y_{k_1} \cdots y_{k_{m-1}})Y''))Z' + I \in C \subset span D$ and $((y_{k_1}y_{k_2} \cdots y_{k_m})Y'')((z_{j_1}z_{j_2})Z')+I \in A \subset span D$.
By induction, $((((y_{k_1} \cdots y_{k_{m-1}})z_{j_1})z_{j_2})(y_{k_m}Y''))Z'+I \in span D$. The Claim 1 is proved.

Now, if $f$ is a monomial in $J(X)$, we shall prove by induction on deg $(f)$ that
$f+I \in span D$.

The cases deg $(f)=1$ and deg $(f)=2$ are trivial.

Suppose deg$(f)\geq 3$ and write $f=gh$ where 
$g,h \in J(X)$ are monomials with degree $<$ deg $(f)$.  By induction hypothesis it follows that 
$g+I$ and $h+I$ belong to $D$. We have six cases to consider: 

\begin{enumerate}
\item $g+I$ and $h+I$ belong to $A$. 

In this case, $g+I=Y'Z'+I$ and $h+I=Y''Z''+I$.  By Lemma \ref{lemaZ} we have  
\begin{align*}
f\equiv (Y'Z')(Y''Z'') & \equiv  ((Y'Z')Y'')Z'' \equiv ((Y'Y'')Z')Z'' \equiv (Y'Y'')(Z'Z'').
\end{align*}
By Lemma \ref{lemavariaveiscomutameassociam} it follows that $f+I \in A \subset span D$. 

\item $g+I$ belongs to $A$ and $h+I$ belongs to $B$.

In this case, $g+I=Y'Z'+I$ and $h+I=(Y''z_{j_1})Z''+I$. By Lemmas \ref{lemaZ} 
and \ref{lemadosdoisys} we have
\begin{align*}
f & \equiv (Y'Z')((Y''z_{j_1})Z'')  \equiv (Y'(Y''z_{j_1}))(Z'Z'') \equiv ((Y'Y'')z_{j_1})(Z'Z'').
\end{align*}
By Lemmas \ref{lemavariaveiscomutameassociam} and  \ref{lemaYz} it follows that $f+I \in B \subset span D$.

\item $g+I$ belongs to $A$ and $h+I$ belongs to $C$.

In this case, $g+I=Y'Z'+I$ and $h+I = (((y_iz_{j_1})z_{j_2})Y'')Z''+I$. By Lemma \ref{lemaZ} and $(y_1,y_2,y_3) \in I$ we have
\begin{align*}
f & \equiv (Y'Z')((((y_iz_{j_1})z_{j_2})Y'')Z'')  
   \equiv (Y'(((y_iz_{j_1})z_{j_2})Y''))(Z'Z'') \\
  & \equiv (((y_iz_{j_1})z_{j_2})(Y'Y''))(Z'Z'').
\end{align*} 
By Lemmas \ref{lemavariaveiscomutameassociam} and  \ref{lemaYzz} it follows that $f+I \in C \subset span D$.

\item $g+I$ and $h+I$ belong to $B$.

In this case, $g+I=(Y'z_{j_1})Z'+I$ and $h+I=(Y''z_{j_2})Z''+I$. 
By Lemma \ref{lemaZ}, by $(z_1,y_1,z_2) \in I$ and Lemma \ref{lemadosdoisys} we have
\begin{align*}
f & \equiv ((Y'z_{j_1})Z')((Y''z_{j_2})Z'') \equiv  ((Y'z_{j_1})(Y''z_{j_2}))(Z'Z'') \\
  & \equiv (((Y'z_{j_1})Y'')z_{j_2})(Z'Z'') \equiv  (((Y'Y'')z_{j_1})z_{j_2})(Z'Z'').
\end{align*}
By Claim 1 it follows that $f+I \in  span D$.

\item $g+I$ belongs to $B$ and $h+I$ belongs to $C$.

In this case, $g+I=(Y'z_{j_1})Z'+I$ and $h+I = (((y_iz_{j_2})z_{j_3})Y'')Z''+I$. We have the following congruences:
\begin{align*}
f & \equiv ((Y'z_{j_1})Z')((((y_iz_{j_2})z_{j_3})Y'')Z'') 
    \equiv [(Y'z_{j_1})(((y_iz_{j_2})z_{j_3})Y'')](Z'Z'')\\ 					  
  & \equiv [((Y'z_{j_1})Y'')((y_iz_{j_2})z_{j_3})](Z'Z'') 
    \equiv [((Y'Y'')z_{j_1})((y_iz_{j_2})z_{j_3})](Z'Z'') \\
  & \equiv [(Y'Y'')(z_{j_1}((y_iz_{j_2})z_{j_3}))](Z'Z'') 
    \equiv [(Y'Y'')((z_{j_1}z_{j_3})(y_iz_{j_2}))](Z'Z'') \\
  &  \equiv [(Y'Y'')(y_iz_{j_2})][(z_{j_1}z_{j_3})(Z'Z'')] 
   \equiv [((Y'Y'')y_i)z_{j_2}][(z_{j_1}z_{j_3})(Z'Z'')].
\end{align*}
By Lemmas \ref{lemavariaveiscomutameassociam} and \ref{lemaYz} it follows that  $f+I \in B \subset span D$.

\item $g+I$ and $h+I$ belong to $C$.

In this case, $g+I = (((y_iz_{j_1})z_{j_2})Y')Z' + I$ and $h+I = (((y_jz_{j_3})z_{j_4})Y'')Z''+ I$. 
We have the following congruences:
\begin{align*}
f & \equiv ((((y_iz_{j_1})z_{j_2})Y')Z')((((y_jz_{j_3})z_{j_4})Y'')Z'') \\
& \equiv  [(((y_iz_{j_1})z_{j_2})((y_jz_{j_3})z_{j_4}))(Y'Y'')](Z'Z'') \\
&  \equiv [(((y_iz_{j_1})(y_jz_{j_3}))(z_{j_2}z_{j_4}))(Y'Y'')](Z'Z'') \\
  &  \equiv [((y_iz_{j_1})(y_jz_{j_3}))(Y'Y'')][(z_{j_2}z_{j_4})(Z'Z'')] \\
  & \equiv 	[(((y_iy_j)z_{j_1})z_{j_3})(Y'Y'')][(z_{j_2}z_{j_4})(Z'Z'')]. 
\end{align*}
By Claim 1 it follows that  $f+I \in  span D$.
\end{enumerate}
The proof  is complete.
\end{proof}

\subsection{The associative grading, when $K$ is an infinite field }

In this subsection we describe the $\mathbb{Z}_2$-graded identities for $UJ_2$ with the associative grading when 
$K$ is infinite.

\begin{theorem}\label{teoremaprincipalGA}
If $K$ is an infinite field of char$(K)\neq 2$ then $I=T_{\text{Ass}}(UJ_2)$, that is, $T_{\text{Ass}}(UJ_2)$ is generated, as a $T_{\mathbb{Z}_2}$-ideal, by 
the polynomials in  Lemma \ref{lemaGA}. Moreover, the set in Lemma \ref{lemageradorGA} is a basis for the
quotient vector space $J(X)/I$.
\end{theorem}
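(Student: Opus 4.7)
Plan for proving Theorem \ref{teoremaprincipalGA}. The inclusion $I\subseteq T_{\text{Ass}}(UJ_2)$ is Lemma \ref{lemaGA}, and Lemma \ref{lemageradorGA} already shows that $\{g+I:g\in S\}$ spans $J(X)/I$. Both conclusions of the theorem follow at once if I prove that $\{g+T_{\text{Ass}}(UJ_2):g\in S\}$ is linearly independent in $J(X)/T_{\text{Ass}}(UJ_2)$: for then $T_{\text{Ass}}(UJ_2)\subseteq I$ (any $f\in T_{\text{Ass}}(UJ_2)$ reduces modulo $I$ to a combination $\sum\lambda_s s$ that again lies in $T_{\text{Ass}}(UJ_2)$, forcing every $\lambda_s=0$), and $S$ becomes a basis of $J(X)/I$.

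Because $K$ is infinite, a standard Vandermonde argument decomposes any graded identity into its multihomogeneous components, each still a graded identity. I fix a multidegree $(r_1,\dots,r_m;t_1,\dots,t_n)$ and enumerate the elements of $S$ inside it. If $\sum_j t_j$ is odd, the only candidate is a Type (b) element (with $z_{j_1}$ the least-indexed $z$-variable appearing); if $\sum_j t_j$ is even there is a Type (a) element $A=Y'Z'$ and, for each $i$ with $r_i\geq 1$ and $\sum_j t_j\geq 2$, a Type (c) element $C_i=(((y_iz_{j_1})z_{j_2})Y')Z'$ with $j_1,j_2$ the two least-indexed $z$-variables. Types (a) and (c) are $\mathbb{Z}_2$-even while Type (b) is $\mathbb{Z}_2$-odd, so they can be treated separately, and the boundary cases (no $y$-variables, or $\sum_j t_j\in\{0,1\}$) each contain only one element of $S$ and are immediate.

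The main computation is the generic graded evaluation $y_i\mapsto \alpha_i+\beta_i b$, $z_j\mapsto \gamma_j a$. Using $a\circ a=1$ and $a\circ b=b\circ b=0$, a compound product $Y'$ of multidegree $(r_1,\dots,r_m)$ evaluates to $P+Qb$ with
\[
P=\prod_i\alpha_i^{r_i},\qquad Q=\sum_i r_i\beta_i\alpha_i^{r_i-1}\prod_{k\neq i}\alpha_k^{r_k},
\]
and a compound $Z'$ with an even number of factors collapses to the scalar $\prod_j\gamma_j^{t_j}\cdot 1$. Hence $A$ evaluates to $\prod_j\gamma_j^{t_j}\cdot(P+Qb)$. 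In $C_i$, the subexpression $(y_iz_{j_1})z_{j_2}$ first kills the $\beta_i b$-part of $y_i$ and reduces to $\alpha_i\gamma_{j_1}\gamma_{j_2}\cdot 1$, so after multiplying by the remaining $Y'$ (of multidegree $(r_1,\dots,r_i-1,\dots,r_m)$) and then by $Z'$ one obtains $\prod_j\gamma_j^{t_j}\cdot(P+R_ib)$, where $R_i$ agrees with $Q$ except that the coefficient of $\beta_i\alpha_i^{r_i-1}\prod_{k\neq i}\alpha_k^{r_k}$ drops from $r_i$ to $r_i-1$; equivalently $Q-R_i=\beta_i\alpha_i^{r_i-1}\prod_{k\neq i}\alpha_k^{r_k}$. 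A Type (b) element $(Y'z_{j_1})Z'$ evaluates analogously to $P\prod_j\gamma_j^{t_j}\cdot a$, a nonzero function.

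Given a dependence $\lambda_0 A+\sum_i\lambda_i C_i=0$ in $UJ_2$ in the even-$z$-degree case, the coefficient of $1$ gives $\lambda_0+\sum_i\lambda_i=0$, and combined with this the coefficient of $b$ collapses to
\[
\sum_i\lambda_i\,\beta_i\alpha_i^{r_i-1}\prod_{k\neq i}\alpha_k^{r_k}=0
\]
identically in $\alpha,\beta,\gamma$. These monomials are pairwise distinct across the valid indices $i$, and $K$ is infinite, so each $\lambda_i=0$ and then $\lambda_0=0$, as required. I expect the main subtlety to be purely notational: $Y'$ and $Z'$ in the definitions of Types (a), (b), (c) must be read as compound factors computed first and then multiplied in as single blocks, not as left-normed concatenations, because only then does the $\beta$-dependent $b$-part of $Y'$ survive the final $Z'$-multiplication; otherwise Types (a) and (c) would evaluate identically in $UJ_2$ and the linear independence argument would collapse.
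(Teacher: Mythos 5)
Your proposal is correct and follows essentially the same route as the paper: reduce to multihomogeneous components over the infinite field, enumerate the spanning monomials of each multidegree by type, and evaluate on the generic graded substitution $y_i\mapsto\alpha_i 1+\beta_i b$, $z_j\mapsto\gamma_j a$, separating the coefficients of $1$ and of $b$. The only (cosmetic) difference is that the paper solves the resulting $(r+1)\times(r+1)$ linear system by explicit row reduction of the augmented matrix, whereas you subtract the coefficient-of-$1$ relation to isolate each $\lambda_i$ directly from the distinct monomials $\beta_i\alpha_i^{r_i-1}\prod_{k\neq i}\alpha_k^{r_k}$; your closing remark about reading $Y'$ and $Z'$ as block factors is also exactly the convention the paper uses.
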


\begin{proof}
By Lemma \ref{lemaGA} we have $I \subseteq T_{\text{Ass}}(UJ_2)$.

Let $S$ be the set in Lemma \ref{lemageradorGA} and write $\overline{S}=\{g+T_{\text{Ass}}(UJ_2): \ g\in S\}$.
Since $I \subseteq T_{\text{Ass}}(UJ_2)$ we have by Lemma \ref{lemageradorGA} that $J(X)/T_{\text{Ass}}(UJ_2)=span \overline{S}$.

We shall prove that $\overline{S}$ is a linearly independent set. 
Let
\[f=\sum_{g\in S} \lambda_g g \in T_{\text{Ass}}(UJ_2), \ \lambda_g \in K.\]
Since $K$ is an infinite field, by Proposition \ref{propositconseq} every multihomogeneous component of $f$ belongs to
$T_{\text{Ass}}(UJ_2)$. Thus it is sufficient to suppose the three cases below:

\begin{align*}
f & 
= \lambda (y_1^{k_1} \cdots y_r^{k_r})(z_1^{t_1} \cdots z_s^{t_s}) \\
 & \ \  +\sum_{i = 1}^r  \lambda_{i} (((y_iz_1)z_2)(y_1^{k_1} \cdots y_i^{k_i-1} \cdots y_r^{k_r}))(z_1^{t_1 - 1}z_2^{t_2-1}z_3^{t_3} \cdots z_s^{t_s})
\end{align*}
or
\begin{align*}
f & 
= \lambda (y_1^{k_1} \cdots y_r^{k_r})(z_1^{t_1} \cdots z_s^{t_s}) \\
 & \ \  +\sum_{i = 1}^r  \lambda_{i} (((y_iz_1)z_1)(y_1^{k_1} \cdots y_i^{k_i-1} \cdots y_r^{k_r}))(z_1^{t_1 - 2}z_2^{t_2}z_3^{t_3} \cdots z_s^{t_s})
\end{align*}
or 
\begin{align*}
f & 
= \lambda (y_1^{k_1} \cdots y_r^{k_r}z_1) (z_1^{t_1} \cdots z_s^{t_s} )
\end{align*}
where $t_1 + \ldots + t_s$ is even. We shall prove that $\lambda =
\lambda_i=0$ for all $i$.
Denote  
\[1 = e_{11}+e_{22}, \ a = e_{11}-e_{22} \  \mbox{and} \ b = e_{12}.\]
In the first and second cases, let $Y_i = \alpha_i 1 + \beta_i b$ and $Z_i = \gamma_i a$, where $\alpha_i, \beta_i,\gamma_i \in K$. 
 We have
\[f(Y_1, \ldots, Y_r, Z_1 , \ldots , Z_s) = \left[\begin{array}{cc}
A & B \\ 
0 & A
\end{array} \right] = 0,\]
where
\begin{align*}
A & = \left(\lambda +\sum_{i = 1}^r \lambda_i \right) \alpha_1^{k_1} \cdots \alpha_r^{k_r}\gamma_1^{t_1} \cdots \gamma_s^{t_s}=0; \\
B & = \sum_{i=1}^r \left(\lambda k_i + \lambda_i (k_i-1) + \sum_{\substack{j=1 \\ j \neq i}}^r \lambda_j k_i \right) \alpha_1^{k_1} \cdots \alpha_i^{k_i - 1}  \cdots \alpha_r^{k_r}\beta_i \gamma_1^{t_1} \cdots \gamma_s^{t_s} =0.
\end{align*}
Since $\alpha_i, \beta_i,\gamma_i$ are any elements of $K$, and  $K$ is infinite, we have by Lemma \ref{lemapolcomutativo} that
\begin{equation}\label{1}
\lambda + \sum_{i = 1}^r \lambda_i = 0
\end{equation}
and also 
\[\lambda k_i + \lambda_i (k_i-1) + \sum_{\substack{j=1 \\ j \neq i}}^r \lambda_j k_i =0 \]
for all $i=1, \ldots, r$ that is 
\begin{equation}\label{2}
\lambda k_i + \lambda_1 k_i + \cdots + \lambda_i (k_i-1) + \cdots + \lambda_r k_i = 0
\end{equation}
for all $i = 1, \ldots, r.$
By the equalities (\ref{1}) and (\ref{2}) we obtain the system
\[ \left\{\begin{array}{llllll}
\lambda + \lambda_1 + \lambda_2 + \ldots + \lambda_r = 0 \\ 
\lambda k_1 + \lambda_1 (k_1-1) + \lambda_2 k_1 + \ldots + \lambda_r k_1 = 0\\
\lambda k_2 + \lambda_1 k_2 + \lambda_2 (k_2-1) + \ldots + \lambda_r k_2 = 0 \\
\vdots\\
\lambda k_r + \lambda_1 k_r + \lambda_2 k_r + \ldots + \lambda_r (k_r-1) = 0
\end{array}\right.
\]
with 
augmented matrix
\[\left[ \begin{array}{ccccccc}
1 & 1 & 1 & \cdots &  1 & 0 \\ 
k_1 & k_1-1 & k_1 & \cdots &  k_1 & 0 \\ 
k_2 & k_2 & k_2-1 & \cdots & k_2 & 0 \\ 
\vdots & \vdots & \vdots & \ddots & \vdots & \vdots \\ 
k_r & k_r & k_r & \cdots & k_r-1 & 0
\end{array} \right] 
\sim
\left[ \begin{array}{ccccccc}
1 & 1 & 1 & \cdots &  1 & 0 \\ 
0 & 1 & 0 & \cdots &  0 & 0 \\ 
0 & 0 & 1 & \cdots & 0 & 0 \\ 
\vdots & \vdots & \vdots & \ddots & \vdots & \vdots \\ 
0 & 0 & 0 & \cdots & 1 & 0
\end{array} \right].
\]
Thus, $\lambda = \lambda_i = 0$ for all $i = 1, \ldots, r$.

In the third case, that is,
\begin{align*}
f(y_1, \ldots, y_r, z_1 , \ldots , z_s) & 
= \lambda (y_1^{k_1} \cdots y_r^{k_r}z_1) (z_1^{t_1} \cdots z_s^{t_s} )
\end{align*}
where $t_1 + \ldots + t_s$ is even, let $Y_i = 1$ and $Z_i = a$ for all $i$. Then
\[f(Y_1, \ldots, Y_r, Z_1 , \ldots , Z_s) = \lambda a =0\]
and so $\lambda = 0$.

Therefore, the set $\overline{S}$ is a basis for the quotient vector space $J(X)/T_{\text{Ass}}(UJ_2)$.
Moreover, since $I \subseteq T_{\text{Ass}}(UJ_2)$, by Lemma \ref{lemageradorGA} we have 
$I=T_{\text{Ass}}(UJ_2) $.
\end{proof}


\subsection{The associative grading, when $K$ is a finite field }

In this subsection we describe the $\mathbb{Z}_2$-graded identities for $UJ_2$ with the associative grading when 
$K$ is finite. Throughout this subsection, $K$ is a finite field with $|K| = q$ elements and char$(K)\neq 2$.

Since $(K-\{0\}, \cdot )$ is a group with $q-1$ elements it follows that 
$x^{q-1}=1$
for all $x\in K-\{0\}$. Therefore, 
$x^q=x$ for all $x\in K$. 

A direct consequence of this fact is the following lemma. 

\begin{lemma}\label{lemaGAf}
The polynomials 
\[(y_1^q - y_1)(y_2^q - y_2), \  \  z_1^q -z_1 \ \ \text{and} \ \ (y_1^q - y_1)z_1\]
belong to $T_{\text{Ass}}(UJ_2)$.
\end{lemma}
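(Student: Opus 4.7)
The plan is to verify the three identities by direct evaluation on the homogeneous components of the grading, using the product table $b\circ b = 0$, $a\circ a = 1$, $a\circ b = 0$ recorded in Section 2, together with the identity $x^q = x$ for every $x\in K$ noted just above the statement.

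First I would establish a closed form for $Y^n$ when $Y = \alpha\cdot 1 + \beta b \in (UJ_2)_0$. Since $(UJ_2)_0$ is a commutative associative subalgebra isomorphic to $K[t]/(t^2)$ (because $1$ is the identity and $b\circ b = 0$), a one-line induction gives $Y^n = \alpha^n + n\alpha^{n-1}\beta\, b$ for every $n\geq 1$. Specializing $n = q$ and using both $\alpha^q = \alpha$ and $q\equiv 0$ in $K$ (since $q = p^k$ in characteristic $p$), I obtain $Y^q = \alpha\cdot 1$, whence
\[Y^q - Y \;=\; -\beta\, b.\]

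Next I would treat $Z = \gamma a \in (UJ_2)_1$. Because $a\circ a = 1$, powers of $a$ alternate: $a^{2m} = 1$ and $a^{2m+1} = a$. The hypothesis char$(K)\neq 2$ forces $q = p^k$ to be odd, so $Z^q = \gamma^q a = \gamma a = Z$, which already proves $z_1^q - z_1 \in T_{\text{Ass}}(UJ_2)$.

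Finally, I would combine the formula $Y_i^q - Y_i = -\beta_i b$ with the remaining products. Plugging into $(y_1^q - y_1)(y_2^q - y_2)$ gives $\beta_1\beta_2(b\circ b) = 0$, and plugging into $(y_1^q - y_1)z_1$ gives $-\beta_1\gamma_1(a\circ b) = 0$, finishing the proof. I do not expect any serious obstacle; the only delicate point is justifying that $Y^n$ and $Z^n$ are unambiguously defined and admit the claimed formulas, which reduces to the fact that the subalgebra generated by a single element of a Jordan algebra is associative (and in fact $(UJ_2)_0$ is already commutative and associative, while the subalgebra generated by $a$ is a copy of $K[t]/(t^2-1)$).
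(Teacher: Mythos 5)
Your proposal is correct and follows the same route as the paper, whose proof is simply the remark that the lemma is a direct consequence of $x^q=x$ on $K$; your computation ($Y^q-Y=-\beta b$ for $Y=\alpha\cdot 1+\beta b$, $Z^q=Z$ for $Z=\gamma a$, then $b\circ b=a\circ b=0$) is exactly the verification the authors leave implicit.
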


\begin{notation}
Let $I'$ be the $T_{\mathbb{Z}_2}$-ideal of $J(X)$ generated by the polynomials in Lemmas \ref{lemaGA} and \ref{lemaGAf}.
\end{notation}

\begin{lemma}\label{lemayq}
The polynomial $(y_1^q, x_1, x_2)$ belongs to $I'$, where $x_1$ and $x_2$ are any variables in $Y\cup Z$.
\end{lemma}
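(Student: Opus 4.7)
The plan is to do a case analysis on whether $x_1, x_2$ lie in $Y$ or $Z$. Three of the four cases are immediate: $(y_1^q, y_i, y_j)$, $(y_1^q, y_i, z_j)$ and $(y_1^q, z_i, y_j)$ are each obtained by $T_{\mathbb{Z}_2}$-substitution from $(y_1, y_2, y_3)$, $(y_1, y_2, z_1)$ or $(y_1, z_1, y_2)$, identities that already lie in $I$ by Lemma \ref{lemaGA} and Lemma \ref{lemadosdoisys}. These cases do not require the finite-field identities of Lemma \ref{lemaGAf} at all.

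The only serious case is $(y_1^q, z_1, z_2)$. Set $C_n := (y_1^n, z_1, z_2) \in J(X)$; note that $C_n$ is $\mathbb{Z}_2$-even. The strategy is to derive a linear recursion for $C_n$ modulo $I$ and then solve it explicitly. Substituting the even polynomial $y_1^n$ for $y_2$ in Lemma \ref{novaident}, and invoking Jordan power-associativity to collapse $y_1 \cdot y_1^n = y_1^{n+1}$, gives
\[
(y_1^{n+1}z_1)z_2 \;+\; (z_1z_2)y_1^{n+1} \;\equiv\; ((y_1z_1)z_2)\,y_1^n \;+\; ((y_1^n z_1)z_2)\,y_1 \pmod{I}.
\]
Writing $A_n := (y_1^n z_1)z_2$ and $B_n := (z_1z_2)y_1^n$, so that $C_n = A_n - B_n$, the left side becomes $C_{n+1} + 2B_{n+1}$. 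On the right, the boundary terms $B_1\,y_1^n = ((z_1z_2)y_1)y_1^n$ and $B_n\,y_1 = ((z_1z_2)y_1^n)y_1$ both collapse to $B_{n+1}$ modulo $I$ by Lemma \ref{lemadosdoiszs} (applied with even substitutions). After the cancellation, the clean recursion
\[
C_{n+1} \;\equiv\; C_1\,y_1^n \;+\; C_n\,y_1 \pmod{I}
\]
emerges. An easy induction, using $(y_1, y_2, y_3) \in I$ to see $(C_1, y_1^{n-1}, y_1) \in I$ (legitimate precisely because $C_1$ is $\mathbb{Z}_2$-even), then yields $C_n \equiv n\,C_1\,y_1^{n-1} \pmod{I}$ for all $n \geq 2$. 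Setting $n = q$ and using the arithmetic fact that $q$ vanishes in $K$ (since $\mathrm{char}(K)\mid q$) gives $C_q \in I \subseteq I'$.

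The main obstacle, as I see it, is extracting the recursion cleanly — in particular, correctly absorbing the non-associative boundary terms of the form $((z_1z_2)y_1^m)y_1^k$ into $B_{m+k}$ by systematic use of Lemma \ref{lemadosdoiszs}. Once that bookkeeping is in place, the rest is formal; it is worth noting that none of the finite-field identities of Lemma \ref{lemaGAf} are actually used here, only the characteristic assumption that makes $q$ zero in $K$.
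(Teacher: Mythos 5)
Your proof is correct and follows essentially the same route as the paper: both substitute a power of $y_1$ into Lemma \ref{novaident} to obtain an induction on $n$, arrive at a closed form $(y_1^n,z_1,z_2)\equiv n\cdot(\text{linear term})$ (your $nC_1y_1^{n-1}$ agrees modulo $I$ with the paper's $n((y_1z_1)z_2)y_1^{n-1}-n(z_1z_2)y_1^n$), and conclude at $n=q$ from $q=0$ in $K$. Your closing remark is also consistent with the paper, whose argument for the $(y_1^q,z_1,z_2)$ case likewise takes place inside $I$ without invoking Lemma \ref{lemaGAf}.
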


\begin{proof}
Since $(y_1,y_2,y_3)$, $(y_1,z_1 ,y_2)$, $(y_1, y_2, z_1) \in I \subseteq I'$ (see definition of $I$  and Lemma \ref{lemadosdoisys}) it follows that $(y_1^q, x_1, x_2) \in I'$ when $x_1\in Y$ or $x_2 \in Y$.

We will prove that $(y_1^q, z_1, z_2) \in I'$. 

\

\noindent {\bf Claim 1.} $(y_1^n, z_1, z_2) + I' = n[((y_1z_1)z_2)y_1^{n-1}] - n[(z_1z_2)y_1^n] + I'$ for all $n\geq 1$.

\noindent \emph{Proof of the claim}. The case $n=1$ is trivial. Suppose $n \geq 2$. 
By Lemmas \ref{novaident} and  \ref{lemaZ}, 
\begin{align*}
(y_1^n, z_1, z_2) + I' & = (y_1^n z_1)z_2 - y_1^n(z_1z_2) + I'\\
				  & = ((y_1^{n-1}z_1)z_2)y_1 + ((y_1z_1)z_2)y_1^{n-1} - (z_1z_2)y_1^n - y_1^n(z_1z_2) + I'\\
				  & = (y_1^{n-1},z_1,z_2)y_1 + ((y_1z_1)z_2)y_1^{n-1} - (z_1z_2)y_1^n  + I'.
\end{align*} 
Now we apply the
induction hypothesis on the first summand and $(y_1,y_2,y_3)\in I'$ to conclude the proof of the claim.

In particular, if $n=q$ then
\[(y_1^q, z_1, z_2) + I' = q[((y_1z_1)z_2)y_1^{q-1}] - q[(z_1z_2)y_1^q] + I' = I'\]
and the proof is complete.
\end{proof}

\begin{lemma}\label{lemazzyq}
The following equality is valid:
\[((y_iz_1)z_2)y_j^q + I' = ((y_iz_1)z_2)y_j + ((y_jz_1)z_2)y_i - (z_1z_2)(y_iy_j) + I'.\]
\end{lemma}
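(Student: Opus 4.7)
The plan is to rewrite the left-hand side $((y_iz_1)z_2)y_j^q$ as $((y_jy_i)z_1)z_2$ modulo $I'$, and then to invoke Lemma~\ref{novaident} to identify this with the claimed right-hand side. First I would use Lemma~\ref{lemayq} to show that $y_j^q$ ``associates with everything modulo $I'$'': since $(y_j^q,x_1,x_2)\in I'$ for all variables $x_1,x_2\in Y\cup Z$, the fact that $I'$ is a $T_{\mathbb{Z}_2}$-ideal together with the symmetries of the associator in \eqref{igualdadeassociator} gives $(y_j^q,u,v)\in I'$ for every pair of homogeneous $u,v\in J(X)$. Combining this with Jordan commutativity, I can slide $y_j^q$ from the outermost right of $((y_iz_1)z_2)y_j^q$ all the way to the innermost left of the parenthesization:
\[
((y_iz_1)z_2)y_j^q \;\equiv\; y_j^q((y_iz_1)z_2) \;\equiv\; (y_j^q(y_iz_1))z_2 \;\equiv\; ((y_j^qy_i)z_1)z_2 \pmod{I'}.
\]

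Next I would reduce the interior $y_j^q$ to $y_j$. By Lemma~\ref{lemaGAf} (substituting $y_1\mapsto y_j$) we have $(y_j^q-y_j)z_1\in I'$, so $y_i\bigl((y_j^q-y_j)z_1\bigr)\in I'$ because $I'$ is an ideal. On the other hand, Lemma~\ref{lemadosdoisys} applied with $y_2$ replaced by the even polynomial $y_j^q-y_j$ gives the associator $(y_i,y_j^q-y_j,z_1)\in I\subseteq I'$. Combining these two facts, and using Jordan commutativity $y_j^qy_i=y_iy_j^q$, yields $\bigl((y_j^qy_i)z_1\bigr)z_2\equiv \bigl((y_jy_i)z_1\bigr)z_2\pmod{I'}$.

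Finally, Lemma~\ref{novaident} applied with $y_1\mapsto y_j$ and $y_2\mapsto y_i$ gives, modulo $I$,
\[
((y_jy_i)z_1)z_2 \;\equiv\; ((y_jz_1)z_2)y_i + ((y_iz_1)z_2)y_j - (z_1z_2)(y_jy_i),
\]
and Jordan commutativity turns $y_jy_i$ into $y_iy_j$ in the last summand. Chaining the three congruences produces the stated identity. I expect the main subtlety to be the middle step: one has to substitute the even polynomial $y_j^q-y_j$ (rather than a single variable) into two different lemmas, which is legitimate only because $I$ and $I'$ are $T_{\mathbb{Z}_2}$-ideals and $y_j^q-y_j$ is homogeneous of the correct parity; this is also the place where the finite-field hypothesis enters, via Lemma~\ref{lemaGAf}.
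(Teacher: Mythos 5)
Your proposal is correct and uses the same three ingredients as the paper's proof (Lemma \ref{lemayq} to reassociate $y_j^q$, the generator $(y_1^q-y_1)z_1$ to lower the exponent, and Lemma \ref{novaident} to expand $((y_iy_j)z_1)z_2$), so it is essentially the same argument. The only cosmetic difference is that the paper first forms $(y_iz_1)(z_2y_j^q)$ so that $y_j^q$ directly multiplies a single odd variable and the exponent drop is immediate, whereas you slide $y_j^q$ to the innermost position and then need the extra associator step $(y_i,y_j^q-y_j,z_1)\in I'$ to perform the same reduction.
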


\begin{proof} 
Let $g = ((y_iz_1)z_2)y_j^q$. By Lemma \ref{lemayq} we have
\[g+I' = (y_iz_1)(z_2y_j^q) + I'.\] 
Since $(y_j^q - y_j)z \in I'$ we obtain
\[y_j^qz + I' = y_jz + I' .\]
We use this equality, $(z_1,y_1,z_2) \in I'$, $(z_1,y_1,y_2) \in I'$ and 
Lemma \ref{novaident} 
to obtain
\begin{align*}
g+I' =& (y_iz_1)(y_jz_2) + I' = ((y_iz_1)y_j)z_2 + I' = ((y_iy_j)z_1)z_2 + I' \\
=&((y_iz_1)z_2)y_j + ((y_jz_1)z_2)y_i - (z_1z_2)(y_iy_j) + I'
\end{align*} 
as desired.
\end{proof}

Denote by $\Lambda_n$ the set of all elements $(s_1, \ldots , s_n) \in \mathbb{Z}^n$ such that:
\begin{enumerate}
\item[a)] $0 \leq s_1, \ldots, s_n < 2q$;
\item[b)] If $s_i \geq q$ for some $i$, then $s_j < q$ for all $j \neq i$.
\end{enumerate}

\begin{lemma}\label{lemageradorGAf}
The quotient vector space $J(X)/I'$ is spanned by the set of all polynomials $g+I'$ such that
\begin{enumerate}
\item[(a)] $g=\overline{Y}$ or
\item[(b)] $g=Y_1'Z_1'$ or
\item[(c)] $g=(Y_1'z_{j})Z_2'$ or
\item[(d)] $g=(((y_iz_{j})z_{l})Y_1')Z_3'$
\end{enumerate}
where
\begin{itemize}
\item $\overline{Y}=y_1^{k_1} \cdots y_r^{k_r}$ with $(k_1, \ldots, k_r) \in \Lambda_r$ and $r\geq 1$ ; 
\item $Y_1' = y_1^{k_1} \cdots y_r^{k_r}$ with $0 \leq k_1, \ldots, k_r < q$ and $r\geq 1$;
\item $Z_1'=z_1^{t_1} \cdots z_s^{t_s}$ with $0 \leq t_1, \ldots, t_s < q$, $s \geq 1$ and $t_1 + \ldots + t_s > 0$ even;
\item $Z_2' = z_{j}^{t_{j}} z_{j+1}^{t_{j+1}} \cdots z_s^{t_s}$ with $j \geq 1$, $s \geq 1$, $0 \leq t_{j}< q-1$, $0 \leq t_{j + 1}, \ldots, t_s < q$ and $t_{j} + \ldots + t_s \geq 0$ even;
\item  $Z_3' = z_{l}^{t_{l}}z_{l+1}^{t_{l+1}} \cdots z_s^{t_s}$ 
with $1 \leq j \leq l$, $s \geq 1$,  $0 \leq t_{l + 1}, \ldots, t_s < q$ and $t_{l}+ t_{l+1} +\ldots + t_s \geq 0$ even.
Moreover, if $j<l$ then $0 \leq t_l <q-1$. If $j=l$ then $0 \leq t_l <q-2$.
\end{itemize}

\end{lemma}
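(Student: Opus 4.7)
The plan is to start from Lemma \ref{lemageradorGA}, whose spanning set (types (a)--(c) there) still spans $J(X)/I'$ because $I\subseteq I'$, and then refine it by reducing the exponents using the additional identities of Lemma \ref{lemaGAf} together with the auxiliary Lemmas \ref{lemayq} and \ref{lemazzyq}. The goal is to rewrite each old-style monomial as one (or a sum) of the four new types (a)--(d), taking care that the fixed outer bracket structure of types (c) and (d) is preserved.

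First I would handle the monomials containing at least one $z$-factor (these will become new types (b), (c), (d)). The key identities are $z^q\equiv z$ and $(y^q-y)z\equiv 0\pmod{I'}$. Using the commutativity/associativity of the $y$-subalgebra and the $z$-subalgebra (Lemma \ref{lemavariaveiscomutameassociam}) together with the sliding identities of Lemma \ref{lemaZ}, I can bring any $y_i^q$ next to a $z$-factor and replace it by $y_i$, and similarly reduce each $z_j^{t_j}$ in the unbracketed $Z'$-tail. This already forces all $y$-exponents to be $<q$ and all exponents in $Z_1'$, $Z_2'$, $Z_3'$ to be $<q$. For the "singled-out" $z$'s sitting inside the fixed brackets of types (c)/(d), the naive reduction is blocked by non-associativity, and this is precisely what Lemmas \ref{lemayq} and \ref{lemazzyq} are designed for: the first says the associator $(y^q,x_1,x_2)$ lies in $I'$, and the second rewrites $((y_iz_1)z_2)y_j^q$ as a sum of simpler type-(d) and type-(b) terms. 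Iterating these (and their evident $z$-analogues, obtained by the same commutativity of the $z$-subalgebra) yields the sharper bounds $t_j<q-1$ in type (c) and $t_l<q-1$ or $<q-2$ in type (d), the off-by-one coming from the single/double occurrence of $z_l$ already sitting inside the bracket.

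Second, for purely $y$-monomials (new type (a)) the single identity $(y_1^q-y_1)(y_2^q-y_2)\equiv 0$ does all the work: expanding it gives
\[y_1^q y_2^q \;\equiv\; y_1^q y_2 + y_1 y_2^q - y_1 y_2 \pmod{I'},\]
so no two distinct $y$-variables can simultaneously carry exponent $\geq q$; and setting $y_1=y_2=y$ in the same identity yields $y^{2q}\equiv 2y^{q+1}-y^2$, cutting any individual exponent below $2q$. Combined, these place the exponent vector exactly into $\Lambda_r$. The ordering on the indices and the even-parity of the total $z$-degree are inherited directly from Lemma \ref{lemageradorGA} and from the reorderings of Lemmas \ref{lemaYz} and \ref{lemaYzz}.

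The main technical obstacle is the bookkeeping around the fixed outer bracket structure: because associativity fails inside Jordan products, one cannot simply rewrite $y^q$ or $z^q$ that appears in a bracketed position, which is why the exponent bounds in types (c) and (d) are $q-1$ (resp.\ $q-2$) rather than $q$. I expect the proof to consist of a case analysis on the old bracket type and on the location of the variable whose exponent exceeds the allowed range, dispatching each case either to Lemma \ref{lemayq}, to Lemma \ref{lemazzyq}, or to a direct application of Lemma \ref{lemaGAf} combined with the reorderings already established.
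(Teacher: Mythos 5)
Your proposal is correct and follows essentially the same route as the paper: it starts from the spanning set of Lemma \ref{lemageradorGA} (valid since $I\subseteq I'$), reduces the $y$-exponents into $\Lambda_r$ via the expansion of $(y_1^q-y_1)(y_2^q-y_2)$, reduces the unbracketed $z$-exponents via $z^q\equiv z$, and dispatches the bracketed positions to $(y^q-y)z$, Lemma \ref{lemayq} and Lemma \ref{lemazzyq}, with the $q-1$ and $q-2$ bounds arising exactly as you describe from absorbing a power of $z$ into the bracketed occurrences. The paper's proof is precisely this case analysis on the three old bracket types.
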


\begin{proof} 
Firstly, with respect to $\overline{Y}, Y_1', Z_1', Z_2'$ and $Z_3'$, these polynomials are well defined. See Lemma \ref{lemavariaveiscomutameassociam}.

Let $A$, $B$, $C$ and $D$ be the sets of all elements $g+I'$ where $g$ is in (a), (b), (c) and (d) respectively. Denote $E = A \cup B \cup C \cup D$. Let $f$ be a monomial in $J(X)$, we shall prove that $f+I' \in span E$. 

Since $I \subseteq I'$ we have by Lemma \ref{lemageradorGA} that the quotient vector space $J(X)/I'$ is spanned by the set of all polynomials:
\begin{enumerate}
\item[(a')] $Y'Z'+I'$,
\item[(b')] $(Y'z_{j})Z''+I'$,
\item[(c')] $(((y_iz_{j})z_{l})Y')Z''+I'$, 
\end{enumerate}
where $Y'=y_1^{k_1}y_{2}^{k_{2}} \cdots y_r^{k_r}$;  $0 \leq k_1, \ldots, k_r$; $r \geq 1$; 
$Z'=z_1^{l_1}z_{2}^{l_2} \cdots z_s^{l_s}$; $0 \leq l_1, l_2,  \ldots, l_s$;  $s\geq 1$; $l_1 + l_2 + \ldots + l_s$ even; 
 $Z''=z_{l}^{t_l}z_{l+1}^{t_{l+1}} \cdots z_s^{t_s}$; $0 \leq  t_l, t_{l+1}, \ldots, t_s$; $1 \leq j \leq l$; $s\geq 1$; $ t_l + t_{l+1} + \ldots + t_s$ even.

Let $A'$, $B'$ and $C'$ be the sets of all elements in (a'), (b') and (c'), respectively. 
We shall prove that $A' \cup B' \cup C' \subseteq span E$.

\

\noindent \emph{Case 1.} $f+I' \in A'$.

In this case, $f + I' = (y_1^{k_1} \cdots y_r^{k_r})(z_1^{l_1} \cdots z_s^{l_s}) + I'$. 
Since $(y_i^q - y_i)(y_j^q - y_j) \in I'$ we obtain
\[y_i^q y_j^q + I' = y_i^q y_j + y_i y_j^q - y_iy_j + I' \ \text{and} \ y_i^{2q} + I' = 2 y_i^{q+1} - y_i^2 + I'.\]
By using several times these two equalities and Lemma \ref{lemavariaveiscomutameassociam}, we can suppose $(k_1, \ldots, k_r) \in \Lambda_r$. 

If $l_1 = \ldots = l_s = 0$ then $f+I' \in A \subset span E$.

Suppose  $l_i \neq 0$ for some $i$. Since $(z_i^q - z_i) \in I'$ we obtain 
\begin{equation} \label{aaa7}
z_i^q+I'=z_i+I'.
\end{equation}
By using several times this equality, we can suppose $0 \leq l_1, \ldots, l_s < q$. We have two cases to consider: $0\leq k_1, \ldots, k_r < q$ or $k_m \geq q$ for some $m$. In the first case, $f+I' \in B \subset span E$. In the second case, write 
$k_m = q + u_m$, where $0 \leq u_m < q$. Let 
\[j_1=\mbox{min}\{ j \ | \ l_j \geq 1 \}.\]
If $l_{j_1} - 1 \geq 1$  denote $j_2=j_1$; otherwise denote 
\[j_2=\mbox{min}\{ j \ | \ l_j \geq 1; \ j\neq j_1 \}.\]
 By Lemmas \ref{lemavariaveiscomutameassociam} and \ref{lemaZ}, we have
\[f+I' = [(y_m^q(z_{j_1}z_{j_2}))\underbrace{(y_1^{k_1} \cdots y_m^{u_m} \cdots y_r^{k_r})}_{Y_1'}]\underbrace{(z_{j_1}^{l_{j_1}-1}z_{j_2}^{l_{j_2} - 1} \cdots z_s^{l_s})}_{Z_3'} + I'.\]
Since $(y_m^q - y_m)z \in I'$ we obtain
\begin{equation} \label{aaa6}
y_m^qz + I' = y_mz + I' .
\end{equation}
By using this equality and 
Lemma \ref{lemayq} we obtain
\[f+I'=[((y_m^qz_{j_1})z_{j_2})Y_1']Z_3'+I'=[((y_mz_{j_1})z_{j_2}))Y_1']Z_3'+I' \in D \subset span E.\]
This case is finished.

\

\noindent \emph{Case 2}. $f+I' \in B'$.

In this case, $f+I' = ((y_1^{k_1} \cdots y_r^{k_r})z_j)(z_j^{t_j} z_{j+1}^{t_{j+1}}\cdots z_s^{t_s}) + I'$. 
By Lemma \ref{lemadosdoisys} and (\ref{aaa6}), we can suppose $0\leq k_1, \ldots, k_r < q $. By (\ref{aaa7}),
we can suppose $0 \leq t_j, t_{j+1}, \ldots, t_s < q$. 
If $t_j < q - 1$ then $f+I'\in C \subset span E$. If $t_j = q - 1$,  by Lemma \ref{lemavariaveiscomutameassociam},
Lemma \ref{lemaZ} and (\ref{aaa7}) we obtain
\begin{align*}
f+I' =&(\underbrace{(y_1^{k_1} \cdots y_r^{k_r})}_{Y_1'}z_j)(z_j^{q-1} \underbrace{z_{j+1}^{t_{j+1}}\cdots z_s^{t_s}}_{Z_2'}) + I'= (Y_1'z_j)(z_j^{q-1} Z_2') + I'\\
				  & =  (Y_1'z_j^q) Z_2' + I'=(Y_1'z_j) Z_2' + I' \in C \subset span E.
\end{align*} 

\noindent \emph{Case 3}. $f+I' \in C'$.

In this case, $f+I' = (((y_iz_j)z_l)\underbrace{(y_1^{k_1} \cdots y_r^{k_r})}_{Y_1'})(z_l^{t_l}\underbrace{z_{l+1}^{t_{l+1}} \cdots z_s^{t_s}}_{Z_3'})+I'$. By Lemma \ref{lemayq}, Lemma \ref{lemazzyq} and Case 1, we can suppose $0 \leq k_1, \ldots, k_r < q$.
By (\ref{aaa7}), we can suppose  $0 \leq t_l, \ldots, t_s < q$ too.  We have two cases to consider:

\begin{enumerate}
\item[(3.a)] $j < l$.

In this case, $f+I' = (((y_iz_j)z_l)Y_1')(z_l^{t_l}Z_3')+I'$. If $t_l= q-1$, by Lemma  \ref{lemaZ} and (\ref{aaa7}) we have 
\[f+I'  = (((y_iz_j)z_l^q)Y_1')Z_3'+I' 
	 =(((y_iz_j)z_l)Y_1')Z_3'+I'.
\]
Thus, we can suppose $0 \leq t_l < q - 1$.
In this case, $f+I' \in D \subset span E$.

\item[(3.b)] $j = l$.

In this case, $f+I' = (((y_iz_l)z_l)Y_1')(z_l^{t_l}Z_3')+I'$. If $t_l = q-1$, we can use the same argument as in Case 3.a. 
If $t_l = q-2$ then there exists $k>l$ such that $t_k\geq 1$, because $q-2$ is odd. Write 
$f+I' = (((y_iz_l)z_l)Y_1')((z_l^{q-2}z_k)Z_3'')+I'$. By Lemma  \ref{lemaZ} and (\ref{aaa7}) we obtain
\begin{align*}
f+I' & =   (((y_iz_l)(z_l^{q-1}z_k))Y_1')Z_3''+I' \\
	 & = (((y_iz_l^q)z_k)Y_1')Z_3''+I'=(((y_iz_l)z_k)Y_1')Z_3''+I'\in D \subset span E.
\end{align*}
Thus, we can suppose $0 \leq t_l < q - 2$.
In this case, $f+I' \in D \subset span E$. 
\end{enumerate}

The lemma is proved.
\end{proof}

\begin{theorem}
If $K$ is a finite field with $|K|=q$ elements and  char$(K)\neq 2$ then $I'=T_{\text{Ass}}(UJ_2)$, that is, $T_{\text{Ass}}(UJ_2)$ is generated, as a $T_{\mathbb{Z}_2}$-ideal, by 
the polynomials in  Lemmas \ref{lemaGA} and \ref{lemaGAf}. Moreover, the set in Lemma \ref{lemageradorGAf} is a basis for the
quotient vector space $J(X)/I'$.
\end{theorem}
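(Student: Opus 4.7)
By Lemmas \ref{lemaGA} and \ref{lemaGAf} we have $I' \subseteq T_{\text{Ass}}(UJ_2)$, and by Lemma \ref{lemageradorGAf} the images $\overline{S'}$ of its listed generators span $J(X)/T_{\text{Ass}}(UJ_2)$. It therefore suffices to prove that $\overline{S'}$ is linearly independent; the strategy will mirror that of Theorem \ref{teoremaprincipalGA}.

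Take $f = \sum_{g\in S'} \lambda_g g \in T_{\text{Ass}}(UJ_2)$. Every basis element in $S'$ has each $z$-variable of degree strictly less than $q$, and in types (b), (c), (d) each $y$-variable also has degree less than $q$; only in type (a), parameterized by $(k_1,\ldots,k_r) \in \Lambda_r$, may a single $y$-variable reach degree $2q-1$. Hence Proposition \ref{propositconseq} will apply to each $z$-variable and, when the $z$-multidegree is nonzero, to each $y$-variable as well. Once $f$ is reduced to a fully multihomogeneous component of nonzero $z$-multidegree, the contributing basis elements are parameterized exactly as in one of the three cases in the proof of Theorem \ref{teoremaprincipalGA}; evaluation at $Y_i = \alpha_i\cdot 1 + \beta_i b$ and $Z_j = \gamma_j a$ combined with Lemma \ref{lemapolcomutativo} will yield the same augmented linear system displayed there, whose row reduction to the identity is valid in any characteristic $\neq 2$. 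This settles every component with nonzero $z$-multidegree.

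The genuinely new case is the zero $z$-multidegree, where $f = \sum_{(k)\in\Lambda_r} \lambda_{(k)} y_1^{k_1}\cdots y_r^{k_r}$. Evaluating at $Y_i = \alpha_i\cdot 1 + \beta_i b$ and using $b^2 = 0$ gives
\[f(Y_1,\ldots,Y_r) = F(\alpha)\cdot 1 + \sum_m \beta_m R_m(\alpha) \cdot b,\]
with $F = \sum \lambda_{(k)} \prod_i \alpha_i^{k_i}$ and $R_m = \sum \lambda_{(k)} k_m \alpha_m^{k_m-1}\prod_{j\neq m}\alpha_j^{k_j}$; both $F$ and each $R_m$ must vanish as functions on $K^r$. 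Reducing exponents modulo $\alpha^q = \alpha$ and invoking Lemma \ref{lemapolcomutativo} forces every coefficient of each reduced polynomial to vanish, producing $(r+1)q^r$ linear equations in the $(r+1)q^r$ unknowns $\lambda_{(k)}$.

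The main obstacle is to show that this square system is nondegenerate. The plan is inductive, starting from the top-degree equations: for each $m$ and each $(e_j)_{j\neq m}\in[0,q-1]^{r-1}$, the coefficient of $\alpha_m^{q-1}\prod_{j\neq m}\alpha_j^{e_j}$ in the reduction of $R_m$ equals $(q-1)$ times the single $\lambda_{(k)}$ with $k_m = 2q-1$ and $k_j = e_j$ for $j\neq m$ (the other candidate $k_m = q$ contributes the factor $k_m \equiv 0$ in $K$, and the $\Lambda_r$ restriction rules out any additional $k_i \geq q$), so these $\lambda$'s vanish first; analogous analyses of successively smaller $\alpha_m$-degrees in the $R_m$, together with the equations coming from the reduced $F$, will then eliminate the remaining unknowns. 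The $\Lambda_r$ condition that at most one $k_i\geq q$ is precisely the combinatorial input that makes this system invertible.
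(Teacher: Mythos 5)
Your treatment of the pure-$y$ part (zero $z$-multidegree) is a genuinely different route from the paper's: the paper disposes of the coefficients $\lambda_{(k)}$, $(k)\in\Lambda_r$, by observing that $(UJ_2)_0$ is the space of symmetric elements of $UT_2$ under the reflection involution and quoting an external result on $*$-identities of $UT_2$ over a finite field, whereas you evaluate on $K\cdot 1+Kb\cong K[b]/(b^2)$ and analyze the resulting square linear system directly. Your system is set up correctly (the ``derivative'' polynomials $R_m$ are right, the count $(r+1)q^r$ matches $|\Lambda_r|$, and the top-degree step isolating $k_m=2q-1$ with coefficient $2q-1\equiv -1\neq 0$ is sound); carried to completion, the remaining equations decouple, for each reduced exponent vector, into exactly the $(r+1)\times(r+1)$ system row-reduced in Theorem \ref{teoremaprincipalGA}, so the plan is viable and has the merit of being self-contained. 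It is, however, only a sketch at the point where the real work remains.

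There is a genuine gap in your handling of the components with nonzero $z$-multidegree. Your claim that ``in types (b), (c), (d) each $y$-variable also has degree less than $q$'' is false for type (d): in $(((y_iz_j)z_l)Y_1')Z_3'$ the variable $y_i$ occurs once outside $Y_1'$ and up to $q-1$ times inside it, so $\deg_{y_i}$ can equal $q=|K|$. Consequently Proposition \ref{propositconseq} cannot be applied to the variable $y_i$, you cannot split into fully multihomogeneous components, and after the substitution $Y_i=\alpha_i 1+\beta_i b$ the monomial $\alpha_i^{q}$ appears and must be reduced via $\alpha_i^q=\alpha_i$ before Lemma \ref{lemapolcomutativo} applies; after that reduction the linear system is \emph{not} the one displayed in Theorem \ref{teoremaprincipalGA}, because coefficients indexed by $k_i=q-1$ collapse onto those of lower degree. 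The paper closes exactly this hole by a preliminary step: it extracts from the $(1,2)$-entry of the evaluation the component of degree $1$ in $\beta_i$ and degree $q-1$ in $\alpha_i$ (which has all exponents $<q$, so Lemma \ref{lemapolcomutativo} does apply) and concludes $\lambda_{(i,k)}=0$ whenever $k_i=q-1$; only then are all remaining $y$-degrees $<q$ and the reduction to the infinite-field computation legitimate. Your proposal needs this (or an equivalent) extra step.
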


\begin{proof}
By Lemmas \ref{lemaGA} and \ref{lemaGAf} we have $I' \subseteq T_{\text{Ass}}(UJ_2)$.

Denote by $S'$ the set of all polynomials $g$ in Lemma \ref{lemageradorGAf} - item (a).
Denote by $S''$ the set of all polynomials $g$ in Lemma \ref{lemageradorGAf} - items (b), (c), (d).
Let $S = S' \cup S''$ and $\overline{S}=\{g+T_{\text{Ass}}(UJ_2): \ g\in S\}$.
Since $I' \subseteq T_{\text{Ass}}(UJ_2)$, by Lemma \ref{lemageradorGAf} it follows that $J(X)/T_{\text{Ass}}(UJ_2)=span \overline{S}$.

We shall prove that $\overline{S}$ is a linearly independent set.
Let
\[f(y_1,\ldots ,y_r, z_1, \ldots , z_s)=\sum_{g\in S} \lambda_g g \in T_{\text{Ass}}(UJ_2), \ \lambda_g \in K.\]
In particular,
\[h=f(y_1,\ldots ,y_r, 0, \ldots , 0)=\sum_{g\in S'} \lambda_g g = \sum_{k \in \Lambda_r} \lambda_{k} y_1^{k_1} \cdots y_r^{k_r} \in T_{\text{Ass}}(UJ_2), \ k=(k_1,\ldots ,k_r).\]

Let $*$ be the involution on the associative algebra $UT_2$ defined by:
\[
\left(\begin{array}{cc}
a_{11}&a_{12} \\
0& a_{22}
\end{array}
\right)^*=
\left(\begin{array}{cc}
a_{22}&a_{12} \\
0& a_{11}
\end{array}
\right).
\]
Note that the symmetric elements of $UT_2$ form a vector subspace  with basis $\{e_{11}+e_{22},e_{12}\}$. 
Moreover, if $u,v \in (UJ_2)_0$ then
\[u\circ v = u\cdot v\]
where $\cdot$ is the usual product of $UT_2$. Thus $h=h(y_1,\ldots , y_r)$ is a $*$ -polynomial identity for $UT_2$ if 
$y_1, y_2, \ldots $ are symmetric variables.  
By \cite[Lemma 5.8]{ronalddimas}, we obtain $\lambda_{k}=0$ for all $k\in \Lambda_r$.

In particular, 
\[f(y_1,\ldots ,y_r, z_1, \ldots , z_s)=\sum_{g\in S''} \lambda_g g .\]
Write 
\[f=\sum_t f_t, \ t = (t_1, \ldots, t_s),\]
where  $f_t$ is multihomogeneous with respect to variables $z_1,\ldots ,z_s$  and $\deg_{z_i}f_t=t_i$ for all $i$.
Since $|K| = q$ and $\deg_{z_i}f < q$ for all $i$, by Proposition \ref{propositconseq} we have $f_t \in  T_{\text{Ass}}(UJ_2)$ for all $t$ and 

\begin{align}\label{aaa1}
f_t & = \sum_k \lambda_k (y_1^{k_1} \cdots y_r^{k_r}) (z_1^{t_1} \cdots z_s^{t_s}) \\
  & \ \ + \sum_{i = 1}^r \sum_k \lambda_{(i, k)} (((y_iz_1)z_2)(y_1^{k_1} \cdots y_i^{k_i} \cdots y_r^{k_r}))(z_1^{t_1 - 1}z_2^{t_2-1}z_3^{t_3} \cdots z_s^{t_s}) \nonumber
\end{align}
or
\begin{align}\label{aaa2}
f_t & = \sum_k \lambda_k (y_1^{k_1} \cdots y_r^{k_r})( z_1^{t_1} \cdots z_s^{t_s}) \\
  & \ \ + \sum_{i = 1}^r \sum_k \lambda_{(i, k)} (((y_iz_1)z_1)(y_1^{k_1} \cdots y_i^{k_i} \cdots y_r^{k_r}))(z_1^{t_1 - 2}z_2^{t_2} \cdots z_s^{t_s}) \nonumber
\end{align}
or
\begin{align}\label{aaa3}
f_t & = 
\sum_k \lambda_k ((y_1^{k_1} \cdots y_r^{k_r})z_1) (z_1^{t_1-1} z_2^{t_2} \cdots z_s^{t_s})
\end{align}
where $k = (k_1, \ldots, k_r)$, $0 \leq k_j < q$ for all $j$. We shall prove that $\lambda_k =
\lambda_{(i,k)}=0$ for all $k,i$.

Suppose $f_t$ as in (\ref{aaa1}).  Let $Y_i = \alpha_i 1 + \beta_i e_{12}$ and $Z_i = e_{11}-e_{22}$,
where $\alpha_i, \beta_i \in K$. We have
\[f_t(Y_1, \ldots, Y_r, Z_1 , \ldots , Z_s) = \left[\begin{array}{cc}
A & B \\ 
0 & A
\end{array} \right] = 0,\]
where

\begin{align*}
A & = \sum_k \lambda_k \alpha_1^{k_1} \cdots \alpha_r^{k_r} 
    + \sum_{i=1}^r \sum_{\substack{k \\ k_i < q - 1}}\lambda_{(i, k)} \alpha_1^{k_1} \cdots \alpha_i^{k_i + 1} \cdots \alpha_r^{k_r} 
    + \sum_{i=1}^r \sum_{\substack{k \\ k_i = q - 1}} \lambda_{(i, k)} \alpha_1^{k_1} \cdots \alpha_i^q \cdots \alpha_r^{k_r}; \\
B & = \sum_k \lambda_k \left(\sum_{j=1}^r k_j \alpha_1^{k_1} \cdots \alpha_j^{k_j - 1}\beta_j \cdots\alpha_r^{k_r}  \right)\\
   & \ \  + \sum_{i=1}^r \sum_{\substack{k \\ k_i < q - 1}} \lambda_{(i, k)} \left( k_i \alpha_1^{k_1} \cdots \alpha_i^{k_i}\beta_i \cdots\alpha_r^{k_r} + \sum_{\substack{j=1 \\ j\neq i}}^r k_j \alpha_1^{k_1} \cdots \alpha_i^{k_i + 1} \cdots \alpha_j^{k_j - 1}\beta_j \cdots \alpha_r^{k_r} \right) \\
   & \ \ + \sum_{i=1}^r \sum_{\substack{k \\ k_i = q - 1}} \lambda_{(i, k)} \left((q - 1)\alpha_1^{k_1} \cdots \alpha_i^{q-1}\beta_i \cdots\alpha_r^{k_r} 
   + \sum_{\substack{j=1 \\ j\neq i}}^r k_j \alpha_1^{k_1} \cdots \alpha_i^q \cdots \alpha_j^{k_j - 1}\beta_j \cdots \alpha_r^{k_r}\right).
\end{align*}

Since $B=0$ for all $\alpha_1, \ldots , \alpha_r, \beta_1, \ldots , \beta_r \in K$ and $\deg_{\beta_i}B=1<q$ for all $i$, it follows that 
every homogeneous component of $B$ with respect to $\beta_i$ of degree $1$ is zero too. Thus, $B=B_1+ \ldots +B_r$ where
\begin{align*}
B_i=& \ \ \sum_k \lambda_k k_i \alpha_1^{k_1} \cdots \alpha_i^{k_i - 1}\beta_i \cdots\alpha_r^{k_r} 
	+ \sum_{\substack{k \\ k_i < q - 1}} \lambda_{(i, k)}k_i \alpha_1^{k_1} \cdots \alpha_i^{k_i}\beta_i \cdots \alpha_r^{k_r}\\
	& \ + \sum_{\substack{l=1 \\ l \neq i}}^r \sum_{\substack{k \\ k_l < q - 1}} \lambda_{(l, k)} k_i \alpha_1^{k_1} \cdots 
	\alpha_l^{k_l+1} \cdots  \alpha_i^{k_i-1}\beta_i \cdots  \alpha_r^{k_r} 
	+  \sum_{\substack{k \\ k_i = q - 1}} \lambda_{(i, k)} (q-1) \alpha_1^{k_1} \cdots \alpha_i^{q-1}\beta_i \cdots \alpha_r^{k_r} \\
	&  \ + \sum_{\substack{l=1 \\ l \neq i}}^r \sum_{\substack{k \\ k_l = q - 1}} \lambda_{(l, k)} k_i \alpha_1^{k_1} \cdots
	\alpha_l^q \cdots \alpha_i^{k_i-1}\beta_i \cdots  \alpha_r^{k_r} = 0.
\end{align*}
and $B_1=\ldots =B_r=0$.
Since $B_i=0$ for all $\alpha_1, \ldots , \alpha_r, \beta_i \in K$ and $\deg_{\alpha_i}B_i <q$ it follows that 
every homogeneous component of $B_i$ with respect to $\alpha_i$ is zero too. Thus, $B_i=B_{i,0}+B_{i,1}+ \ldots +B_{i,q-1}$ where $\deg_{\alpha_i} B_{i,j}=j$, and
\[B_{i,q-1}=\sum_{\substack{k \\ k_i = q - 1}} \lambda_{(i, k)} (q-1) \alpha_1^{k_1} \cdots \alpha_i^{q-1}\beta_i \cdots \alpha_r^{k_r} = 0.\]
Since $B_{i,q-1}$ is a polynomial identity for $K$ and $\deg_{\alpha_j} B_{i,q-1}<q$ for all $j$, by Lemma \ref{lemapolcomutativo} we obtain   $\lambda_{(i, k)} = 0$ for all $i$ and $k = (k_1, \ldots, k_{i-1}, q-1, k_{i+1}, \ldots, k_r)$. 

In particular, by (\ref{aaa1}) we have 
\begin{align*}
f_t & = \sum_k \lambda_k y_1^{k_1} \cdots y_r^{k_r} z_1^{t_1} \cdots z_s^{t_s} \\
  & \ \ + \sum_{i = 1}^r \sum_{\substack{k \\ k_i < q - 1}} \lambda_{(i, k)} (((y_iz_1)z_2)(y_1^{k_1} \cdots y_i^{k_i} \cdots y_r^{k_r}))(z_1^{t_1 - 1}z_2^{t_2-1}z_3^{t_3} \cdots z_s^{t_s}).
\end{align*}
Since $\deg_{y_j}f_t <q$ and $\deg_{z_j}f_t <q$ for all $j$, every multihomogeneous component of $f_t$ belongs to 
$T_{\text{Ass}}(UJ_2)$. 
Now, using similar arguments as in Theorem \ref{teoremaprincipalGA} it follows that $\lambda_k = \lambda_{(i,k)} = 0$
for all $i,k$ as desired.

The second case (\ref{aaa2}) is analogous to the first (\ref{aaa1}).

Now, let $f_t$ as in (\ref{aaa3}), that is, 
\[f_t  = \sum_k \lambda_k ((y_1^{k_1} \cdots y_r^{k_r})z_1) (z_1^{t_1-1} \cdots z_s^{t_s}).\]
Since $\deg_{y_j}f_t< q$ for all $j$, it follows that 
\[f_{t,k}(y_1, \ldots, y_r, z_1, \ldots, z_s) = \lambda_k ((y_1^{k_1} \cdots y_r^{k_r})z_1)( z_1^{t_1-1} \cdots z_s^{t_s} ) \in T_{\text{Ass}}(UJ_2)\]
for all $k$. Thus, if $Y_i = 1$ and $Z_i = e_{11}-e_{22}$ then
\[0=f_{t,k}(Y_1, \ldots, Y_r, Z_1, \ldots, Z_s) = \lambda_k (e_{11}-e_{22}) \]
and so $\lambda_k = 0$ for all $k$. 

Therefore, the set $\overline{S}$ is a basis for the quotient vector space $J(X)/T_{\text{Ass}}(UJ_2)$.
Moreover, since $I' \subseteq T_{\text{Ass}}(UJ_2)$, by Lemma \ref{lemageradorGAf} we have 
$I'=T_{\text{Ass}}(UJ_2) $.
\end{proof}

\section{The scalar grading }

Let $T_{\text{Sca}}(UJ_2)$ be the set of all $\mathbb{Z}_2$-graded polynomial identities for $UJ_2$ with the scalar grading. 
In this section we will describe $T_{\text{Sca}}(UJ_2)$ for any field $K$ of char$(K)=p \neq 2$.

We remember that 
\[UJ_2=\left(UJ_2 \right)_0\oplus \left(UJ_2 \right)_1,\]
where 
\[\left(UJ_2 \right)_0 =span\{e_{11}+e_{22}\} \ \mbox{and} \ \left(UJ_2 \right)_1 =span\{e_{11}-e_{22}, \ e_{12}\}. \]

\begin{lemma}\label{lemaGS}
The polynomials
\[(y_1,y_2,y_3) ,  \ (z_1, y_1, y_2), \ (y_1, z_1,z_2) \ \text{and} \ z_1(z_2,z_3,z_4)\]
belong to $T_{\text{Sca}}(UJ_2)$.
\end{lemma}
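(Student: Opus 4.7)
The plan is to verify each identity by direct substitution using the explicit structure of the scalar grading, where $(UJ_2)_0 = K \cdot 1$ and $(UJ_2)_1 = Ka \oplus Kb$. I will write a generic substitution as $Y_i = \alpha_i \cdot 1$ and $Z_j = \beta_j a + \gamma_j b$ with $\alpha_i, \beta_j, \gamma_j \in K$, and rely on the preliminary identities $a\circ a = 1$, $a\circ b = b\circ b = 0$.

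First I will dispatch the three associators $(y_1,y_2,y_3)$, $(z_1,y_1,y_2)$ and $(y_1,z_1,z_2)$ in one stroke. Since every even element under this grading is $\alpha \cdot 1$, Jordan multiplication by such a $Y$ coincides with ordinary scalar multiplication by $\alpha$, and therefore commutes with both left- and right-multiplication operators of the Jordan product. Hence, any associator one of whose three slots is occupied by an even element vanishes identically, which covers the first three polynomials at once.

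For the fourth identity $z_1(z_2,z_3,z_4)$, the central observation is that the Jordan product of two odd elements is always a scalar: from $Z_i\circ Z_j = \beta_i\beta_j(a\circ a) + \beta_i\gamma_j(a\circ b) + \gamma_i\beta_j(b\circ a) + \gamma_i\gamma_j(b\circ b)$ and the collapse relations above, I get $Z_i\circ Z_j = \beta_i\beta_j \cdot 1$. Thus $(Z_2\circ Z_3)\circ Z_4 = \beta_2\beta_3 Z_4$ and $Z_2\circ(Z_3\circ Z_4) = \beta_3\beta_4 Z_2$, giving $(Z_2,Z_3,Z_4) = \beta_2\beta_3 Z_4 - \beta_3\beta_4 Z_2$. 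Multiplying on the left by $Z_1$ and applying $Z_1\circ Z_k = \beta_1\beta_k\cdot 1$ once more yields $\beta_1\beta_2\beta_3\beta_4 \cdot 1 - \beta_1\beta_2\beta_3\beta_4 \cdot 1 = 0$.

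There is no genuine obstacle: the whole argument rests on the single computation $Z_i\circ Z_j = \beta_i\beta_j\cdot 1$ together with the triviality of associators containing a scalar entry. What is worth remarking is that the pure associator $(z_1,z_2,z_3)$ is \emph{not} a scalar-graded identity (the computation above already shows $(Z_2,Z_3,Z_4)$ is typically nonzero), so the extra left multiplication by $z_1$ in the fourth polynomial is precisely what forces the residual odd terms into the one-dimensional scalar subspace where they must cancel.
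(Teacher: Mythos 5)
Your proof is correct and is exactly the "direct verification" the paper invokes without writing out: the first three associators vanish because an even element is a scalar matrix and hence acts centrally, and the fourth vanishes because the product of two odd elements lands in $K\cdot 1$, making $Z_1\circ(Z_2,Z_3,Z_4)=\beta_1\beta_2\beta_3\beta_4\cdot 1-\beta_1\beta_2\beta_3\beta_4\cdot 1=0$. Your closing observation that $(z_1,z_2,z_3)$ alone is not an identity is also accurate and explains why the extra factor $z_1$ is needed.
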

\begin{proof}
The proof consists of a direct verification.
\end{proof}

\begin{notation}
Let $I$ be the $T_{\mathbb{Z}_2}$-ideal of $J(X)$ generated by the polynomials in Lemma \ref{lemaGS}. 
\end{notation}

Define the equivalence relation $\equiv$ on $J(X)$ as follows: if $f,g \in J(X)$ then
\[f\equiv g \Leftrightarrow f+I=g+I.\]

\begin{lemma}\label{lemaconsequenciasGS}
Let $x_1, x_2, x_3 \in Y\cup Z$. 

\begin{itemize}
\item[(a)] If $x_i\in Y$ for some $1\leq i \leq 3$, then 
$(x_1,x_2,x_3)\in I$.
\item[(b)] $z_1(x_1,x_2,x_3)\in I$.
\end{itemize}
\end{lemma}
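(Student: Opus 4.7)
The plan is straightforward: part (a) is a case analysis reducing every arrangement with at least one $y$-variable to one of the three generating associators $(y_1,y_2,y_3)$, $(z_1,y_1,y_2)$, $(y_1,z_1,z_2)$ via the identities \eqref{igualdadeassociator}; part (b) is immediate from part (a) plus the fourth generator $z_1(z_2,z_3,z_4)$.

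For part (a), I would enumerate the six possible ``shapes'' according to how many $y$'s appear and where. If all three variables are in $Y$, $(x_1,x_2,x_3)$ is in $I$ by the defining generator $(y_1,y_2,y_3)$. If exactly two are in $Y$, there are three positions for the single $z$. The case $(z_1,y_1,y_2)$ is a generator. The case $(y_1,y_2,z_1)$ follows from the antisymmetry $(w,v,u)=-(u,v,w)$ applied to the generator. The case $(y_1,z_1,y_2)$ follows from the second identity in \eqref{igualdadeassociator}, namely $(y_1,z_1,y_2)=(z_1,y_1,y_2)-(z_1,y_2,y_1)$, both summands being in $I$. If exactly one variable is in $Y$, the three subcases are handled symmetrically: $(y_1,z_1,z_2)$ is a generator, $(z_1,z_2,y_1)=-(y_1,z_2,z_1)\in I$ by antisymmetry, and $(z_1,y_1,z_2)=(y_1,z_1,z_2)-(y_1,z_2,z_1)\in I$ by the second relation of \eqref{igualdadeassociator}.

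For part (b), the argument splits in two. If at least one of $x_1,x_2,x_3$ lies in $Y$, part (a) already gives $(x_1,x_2,x_3)\in I$, and since $I$ is an ideal of $J(X)$, multiplying by $z_1$ keeps us inside $I$. If instead all three of $x_1,x_2,x_3$ lie in $Z$, then $z_1(x_1,x_2,x_3)$ is literally a $T_{\mathbb{Z}_2}$-consequence of the generator $z_1(z_2,z_3,z_4)$ by substitution.

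I do not foresee a real obstacle: the whole proof is bookkeeping with the two relations in \eqref{igualdadeassociator} together with the closure of $I$ under products and graded substitutions. The only thing to be careful about is not to forget the case $(z_1,y_1,z_2)$ and the symmetric one $(y_1,z_2,z_1)$, which require combining \emph{both} antisymmetry and the $(v,u,w)=(u,v,w)-(u,w,v)$ identity rather than a single application.
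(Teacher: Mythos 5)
Your proof is correct and is exactly the argument the paper intends: part (a) is the case analysis reducing each placement of the $y$'s and $z$'s to the generators $(y_1,y_2,y_3)$, $(z_1,y_1,y_2)$, $(y_1,z_1,z_2)$ via the two relations in \eqref{igualdadeassociator}, and part (b) follows from (a), the ideal property, and the generator $z_1(z_2,z_3,z_4)$. (Your closing remark is slightly off --- $(z_1,y_1,z_2)$ needs only a single application of the second relation, and $(y_1,z_2,z_1)$ is already a substitution instance of a generator --- but the derivations you actually wrote are all correct.)
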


\begin{proof}
By $(y_1,y_2,y_3) , (z_1, y_1, y_2), (y_1, z_1,z_2) \in I$ and (\ref{igualdadeassociator}) we prove (a). 
 By $z_1(z_2,z_3,z_4) \in I$ and  (a) we prove (b).
\end{proof}

\begin{lemma}\label{lemapermutarz}
If $s$ is even and $\sigma \in Sym(s)$ then
\[(z_{\sigma(1)}z_{\sigma(2)})(z_{\sigma(3)}z_{\sigma(4)})\cdots (z_{\sigma(s-1)}z_{\sigma(s)})
\equiv (z_1z_2)(z_3z_4) \cdots (z_{s-1}z_s).\]
\end{lemma}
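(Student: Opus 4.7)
The plan is to reduce the pair-bracketed statement to an invariance-under-permutation statement for the corresponding left-normed $z$-monomial, which can then be checked one adjacent transposition at a time using the generators of $I$.

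First I would prove by induction on $k\geq 1$ that for any indices $i_1,\ldots,i_{2k}$,
\[
(z_{i_1}z_{i_2})(z_{i_3}z_{i_4})\cdots(z_{i_{2k-1}}z_{i_{2k}})\equiv z_{i_1}z_{i_2}\cdots z_{i_{2k}},
\]
where the right-hand side is left-normed in the convention fixed in the preliminaries. Writing $W=(z_{i_1}z_{i_2})\cdots(z_{i_{2k-3}}z_{i_{2k-2}})$, the element $W$ is even, and the associator $(W,z_{i_{2k-1}},z_{i_{2k}})$ lies in $I$ by $(y_1,z_1,z_2)\in I$ from Lemma \ref{lemaGS} (substituting $y_1\mapsto W$). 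Thus $W(z_{i_{2k-1}}z_{i_{2k}})\equiv (Wz_{i_{2k-1}})z_{i_{2k}}$, and the inductive hypothesis closes the step.

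Second, assuming $s$ is even, I would show that the left-normed monomial $z_{\sigma(1)}\cdots z_{\sigma(s)}$ is invariant modulo $I$ under every adjacent transposition $(i,i+1)$ with $1\leq i\leq s-1$. Setting $w=z_{\sigma(1)}\cdots z_{\sigma(i-1)}$ (read as empty when $i=1$), commutativity of $J(X)$ and the definition of the associator yield
\[
(wz_{\sigma(i)})z_{\sigma(i+1)}-(wz_{\sigma(i+1)})z_{\sigma(i)}=(w,z_{\sigma(i)},z_{\sigma(i+1)})-(w,z_{\sigma(i+1)},z_{\sigma(i)}).
\]
If $i$ is odd then $w$ is even and both associators are in $I$ by $(y_1,z_1,z_2)\in I$. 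If $i$ is even then $w$ is odd; the hypothesis that $s$ is even then forces $i\leq s-2$, so the variable $z_{\sigma(i+2)}$ appears further to the right, and left-multiplication by it annihilates each associator by $z_1(z_2,z_3,z_4)\in I$ (applied with the appropriate odd substitutions). Since $I$ is an ideal, subsequent left-normed multiplication by $z_{\sigma(i+3)},\ldots,z_{\sigma(s)}$ preserves the resulting congruence.

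Adjacent transpositions generate $Sym(s)$, so combining the two steps gives
\[
(z_{\sigma(1)}z_{\sigma(2)})\cdots(z_{\sigma(s-1)}z_{\sigma(s)})\equiv z_{\sigma(1)}\cdots z_{\sigma(s)}\equiv z_1\cdots z_s\equiv(z_1z_2)\cdots(z_{s-1}z_s),
\]
as required. The main obstacle is the even-$i$ subcase of the second step: the triple $(w,z_{\sigma(i)},z_{\sigma(i+1)})$ consists of three odd elements, so its associator is not in $I$ on its own; the saving feature is that $s$ being even forces an additional odd variable to the right, which is exactly what $z_1(z_2,z_3,z_4)\in I$ needs in order to kill the difference.
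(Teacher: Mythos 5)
Your proof is correct, and it uses exactly the same two ingredients as the paper --- the vanishing of associators with an even entry (Lemma \ref{lemaconsequenciasGS}(a), i.e.\ the consequence of $(y_1,z_1,z_2)\in I$) and the generator $z_1(z_2,z_3,z_4)\in I$ for the all-odd case --- but organizes them differently. The paper observes that each pair $(z_iz_j)$ is even, so the pairs commute and associate with one another by Lemma \ref{lemaconsequenciasGS}(a); this reduces the whole statement to the single cross-block swap $(z_1z_2)(z_3z_4)\equiv(z_1z_3)(z_2z_4)$, verified in one line. You instead flatten everything to a left-normed word $z_{i_1}\cdots z_{i_{2k}}$ and check invariance under adjacent transpositions, splitting on the parity of the position. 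Both are valid; the paper's version is shorter, while yours has the merit of isolating a reusable normal form and of making explicit where the evenness of $s$ enters (it guarantees the trailing odd variable $z_{\sigma(i+2)}$ needed to kill the all-odd associator via $z_1(z_2,z_3,z_4)$, which is indeed the only delicate point). Your step 1 induction and the substitution of the odd monomial $w$ for $z_2$ in the generator are both legitimate uses of the $T_{\mathbb{Z}_2}$-ideal structure, so there is no gap.
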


\begin{proof}
Since $(z_{\sigma (i)}z_{\sigma (i+1)})$ is an even polynomial, by Lemma \ref{lemaconsequenciasGS} - (a) it is sufficient to prove 
$(z_1z_2)(z_3z_4) \equiv (z_1z_3)(z_2z_4)$.
By Lemma \ref{lemaconsequenciasGS} we have  
\begin{align*}
(z_1z_2)(z_3z_4)&\equiv z_1(z_2(z_3z_4)) \equiv z_1((z_2z_3)z_4) \equiv z_1((z_3z_2)z_4)\\ 
&\equiv z_1(z_3(z_2z_4))  \equiv (z_1z_3)(z_2z_4).
\end{align*} 
The proof is complete. 
\end{proof}
 
\begin{lemma}\label{lemageradorGS}
Let $S$ be the subset of $J(X)$ formed by all polynomials
\begin{enumerate}
\item[(a)] $Y'Z'$ and
\item[(b)] $Y'(z_{i_0}Z')$,
\end{enumerate}
where $Y'=y_1^{k_1} \cdots y_r^{k_r}$; $k_i \geq 0$ for all $i$; $r\geq 0$;  $Z'=(z_{i_1}z_{j_1})(z_{i_2}z_{j_2}) \cdots (z_{i_t}z_{j_t})$;  
$i_1 \leq j_1 \leq i_2 \leq j_2 \leq \ldots \leq i_t \leq j_t$; $t\geq 0$; $i_0 \geq 1$. 
Then  the quotient vector space $J(X)/I$ is spanned by the set of all elements $g+I$ where $g\in S$. 
\end{lemma}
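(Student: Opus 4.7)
My plan is to induct on the degree of a monomial $f\in J(X)$, following the template of Lemma \ref{lemageradorGA}. Writing $f=gh$ with $g,h$ of smaller degree, the induction hypothesis places $g+I$ and $h+I$ in the span of the images of $S$, so I may assume each is a single element of form (a) or (b); the task is to reduce the product $(g+I)(h+I)$ back into $\mathrm{span}\{g+I : g\in S\}$.

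Before starting the induction I would strengthen Lemma \ref{lemaconsequenciasGS}. Because $I$ is a $T_{\mathbb{Z}_2}$-ideal, substituting arbitrary even elements for the $y$-variables and arbitrary odd elements for the $z$-variables in the associator generators of $I$ coming from Lemma \ref{lemaGS} yields
\[(A,B,C)\equiv 0 \text{ whenever at least one of } A,B,C \text{ is even},\]
together with $C_1(C_2,C_3,C_4)\equiv 0$ for any odd $C_i$. Combined with commutativity of $J(X)$, this makes the even subalgebra of $J(X)/I$ associative and commutative, and it lets even factors slide freely through any associator.

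For the inductive step I would split into three cases according to whether $(g+I, h+I)$ is in $(a)\times(a)$, $(a)\times(b)$ (or symmetric), or $(b)\times(b)$. In every case the $Y'$-parts, being even, can be pulled to the outside and merged via associativity and commutativity of the $y$-subalgebra given by Lemma \ref{lemaconsequenciasGS}(a); what remains inside is a product of one or two $z$-factors which I would reassociate into either a pure pair-product (giving form (a)) or a single $z$ times a pair-product (giving form (b)). Lemma \ref{lemapermutarz} then sorts the pairs into the required index order, and commutativity plus associativity of the $y$-subalgebra puts $Y'$ into its canonical form $y_1^{k_1}\cdots y_r^{k_r}$.

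The main obstacle is the case $(b)\times(b)$, because both inner $z$-factors are odd and the even-subalgebra associativity cannot be invoked on them directly. The key computation, starting from $g\equiv Y'_g(z_\alpha Z'_g)$ and $h\equiv Y'_h(z_\beta Z'_h)$ and ignoring the $Y'$-parts once pulled out, is
\[(z_\alpha Z'_g)(z_\beta Z'_h)\equiv ((z_\alpha Z'_g)z_\beta)Z'_h\equiv (z_\alpha(z_\beta Z'_g))Z'_h\equiv ((z_\alpha z_\beta)Z'_g)Z'_h\equiv (z_\alpha z_\beta)(Z'_g Z'_h),\]
where each step collapses an associator whose last or middle entry is the even $Z'_h$ or $Z'_g$ (or $z_\alpha z_\beta$ in the final step), and the second step additionally uses the commutativity $Z'_g z_\beta = z_\beta Z'_g$. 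The two stray $z$'s thereby fuse into a new pair $z_\alpha z_\beta$, placing $f$ in form (a) after reordering by Lemma \ref{lemapermutarz}. The remaining cases are strictly easier and amount to bookkeeping with the even-subalgebra associativity established in the preparatory step.
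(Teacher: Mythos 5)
Your proof is correct and follows essentially the same route as the paper's: both rest on the facts that every associator with an even entry, together with $z_1(z_2,z_3,z_4)$, lies in $I$, and both finish by sorting with Lemma \ref{lemapermutarz}. The only difference is organizational --- the paper first strips all $y$-variables to the outside and then runs the induction on pure $z$-monomials (its Claim 1), whereas you carry the $Y'$-parts through a single induction on the degree of $f$; your key $(b)\times(b)$ computation fusing the two stray $z$'s into a new even pair is exactly the step the paper's Claim 1 induction performs.
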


\begin{proof}
Let $A$ and $B$ be the sets of all elements $g+I$ where $g$ is in (a) and (b), respectively. Denote $C=A\cup B$. 
If $f(y_1, \ldots, y_r, z_1, \ldots, z_s)$ is a monomial in $J(X)$, we shall prove that $f+I \in span C$.

By Lemma \ref{lemaconsequenciasGS} - (a) we have
\begin{equation} \label{aaa4}
f \equiv y_1^{k_1} \cdots y_r^{k_r} g(z_1, \ldots, z_s),
\end{equation}
where $g(z_1, \ldots, z_s)$ is a monomial in the variables $z_1,z_2, \ldots ,z_s$.

Let $A_Z$ be the subalgebra of $J(X)/I$ generated by the set $\{z+I \ : \ z\in Z \}$.

\

\noindent {\bf Claim 1}. The vector space $A_Z$ is spanned by all elements $Z''+I$ and $z_{i_0}Z''+I$
where $Z''=(z_{i_1}z_{j_1})(z_{i_2}z_{j_2}) \cdots (z_{i_t}z_{j_t})$ and $t\geq 0$.

\noindent \emph{Proof of the Claim 1}. Let $h \in A_Z$ be a monomial. We will prove the result by induction on deg$(h)=n$. For $n = 1,2,3$ is trivial. Suppose $n \geq 4$ and write $h=h_1h_2$ where deg$(h_1),$ deg$(h_2)$ $<n$. We use the 
induction hypothesis on $h_1$ and $h_2$, and  
by Lemma \ref{lemaconsequenciasGS} - (a) we have the desired.

By (\ref{aaa4}), Claim 1 and Lemma \ref{lemapermutarz} we finish the proof of the lemma.
\end{proof}

\subsection{The scalar grading, when $K$ is an infinite field }

In this subsection we describe the $\mathbb{Z}_2$-graded identities for $UJ_2$ with the scalar grading when 
$K$ is infinite.

\begin{theorem}\label{teoremaprincipalGS}
If $K$ is an infinite field of char$(K)\neq 2$ then $I=T_{\text{Sca}}(UJ_2)$, that is, $T_{\text{Sca}}(UJ_2)$ is generated, as a $T_{\mathbb{Z}_2}$-ideal, by 
the polynomials in  Lemma \ref{lemaGS}. Moreover, the set in Lemma \ref{lemageradorGS} is a basis for the
quotient vector space $J(X)/I$.
\end{theorem}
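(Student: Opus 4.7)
The inclusion $I\subseteq T_{\text{Sca}}(UJ_2)$ is Lemma \ref{lemaGS}, and Lemma \ref{lemageradorGS} then gives that $\overline{S}=\{g+T_{\text{Sca}}(UJ_2):g\in S\}$ spans $J(X)/T_{\text{Sca}}(UJ_2)$. Both assertions of the theorem therefore reduce to showing $\overline{S}$ is linearly independent: this simultaneously yields $I=T_{\text{Sca}}(UJ_2)$ and the basis claim. The plan mirrors the proof of Theorem \ref{teoremaprincipalGA}.

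Take $f=\sum_{g\in S}\lambda_g g\in T_{\text{Sca}}(UJ_2)$. Since $K$ is infinite, iterating Proposition \ref{propositconseq} puts each multihomogeneous component of $f$ in $T_{\text{Sca}}(UJ_2)$, so I may fix a multidegree $(k_1,\ldots,k_r,t_1,\ldots,t_s)$. Type-(a) monomials have even total $z$-degree while type-(b) monomials have odd total $z$-degree, so only one family contributes. Within type-(a), the chain $i_1\leq j_1\leq i_2\leq\cdots\leq i_t\leq j_t$ forces the indices to be the sorted multiset with prescribed multiplicities, giving a unique type-(a) normal form per multidegree. Within type-(b), the normal forms are parameterized by the choice $i_0\in\{j:t_j\geq 1\}$, and for each such $i_0$ the inner $Z'$ is the uniquely determined type-(a) normal form of the reduced multidegree.

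The key computation is the evaluation
\[y_i\mapsto\alpha_i(e_{11}+e_{22}),\qquad z_j\mapsto\beta_j(e_{11}-e_{22})+\gamma_j e_{12}\]
with $\alpha_i,\beta_j,\gamma_j\in K$ free. From $a\circ a=1$ and $a\circ b=b\circ b=0$, a direct check gives $(\beta_i a+\gamma_i b)\circ(\beta_j a+\gamma_j b)=\beta_i\beta_j\cdot 1$, so every Jordan product of two substituted odd variables lies in $K\cdot 1$. Hence $Z'=(z_{i_1}z_{j_1})\cdots(z_{i_t}z_{j_t})$ evaluates to a scalar multiple of $1$, namely $\beta_1^{t_1}\cdots\beta_s^{t_s}\cdot 1$ in the even case (or the analogous product with $t_{i_0}-1$ in place of $t_{i_0}$ for type-(b)), while $Y'$ evaluates to $\alpha_1^{k_1}\cdots\alpha_r^{k_r}\cdot 1$. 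In the even case $f$ evaluates to $\lambda\,\alpha^k\beta^t\cdot 1$ and Lemma \ref{lemapolcomutativo} forces $\lambda=0$. In the odd case, the evaluation of $Y'(z_{i_0}Z')$ equals $\alpha^k\beta_1^{t_1}\cdots\beta_{i_0}^{t_{i_0}-1}\cdots\beta_s^{t_s}(\beta_{i_0}a+\gamma_{i_0}b)$, and extracting the $e_{12}$-coefficient of $f$ gives
\[\sum_{i_0:\,t_{i_0}\geq 1}\lambda_{i_0}\,\alpha^k\beta_1^{t_1}\cdots\beta_{i_0}^{t_{i_0}-1}\cdots\beta_s^{t_s}\gamma_{i_0}=0.\]
Since each summand carries a distinct $\gamma_{i_0}$ of degree one, the monomials are $K$-linearly independent and Lemma \ref{lemapolcomutativo} forces every $\lambda_{i_0}=0$.

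The main obstacle is the bookkeeping of normal forms, specifically verifying the uniqueness of the type-(a) representative per multidegree and checking that the $e_{12}$-coefficient really does separate the different choices of $i_0$ in the type-(b) case; once the generic substitution above is fixed these facts are immediate, but without the collapse $z_i\circ z_j\in K\cdot 1$ the evaluation argument would be considerably heavier.
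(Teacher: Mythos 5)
Your proposal is correct and follows essentially the same route as the paper: reduce to multihomogeneous components, exploit the collapse $z_iz_j\mapsto K\cdot 1$ under evaluation, and isolate each $\lambda_{i_0}$ via the $e_{12}$-coefficient. The only (cosmetic) difference is that you use one generic substitution $z_j\mapsto\beta_j a+\gamma_j b$ and invoke Lemma \ref{lemapolcomutativo} to separate the coefficients, whereas the paper substitutes $Z_{i_0}=a+b$ and $Z_i=a$ for $i\neq i_0$ one index at a time and reads off $\lambda_{i_0}$ directly.
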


\begin{proof}
If $g=g(z_1,\ldots, z_n)=z_{i_0}(z_{i_1}z_{j_1})(z_{i_2}z_{j_2}) \cdots (z_{i_t}z_{j_t})$ where $i_1 \leq j_1 \leq i_2 \leq j_2 \leq \ldots \leq i_t \leq j_t$, and 
\[d=(\deg_{z_1}g, \deg_{z_2}g, \ldots ,\deg_{z_n}g),\]
we denote $g=g_{(z_{i_0},d)}$.

 By Lemma \ref{lemaGS} we have $I \subseteq T_{\text{Sca}}(UJ_2)$.

Let $S$ be the subset in Lemma \ref{lemageradorGS} and  
write $\overline{S}=\{g+T_{\text{Sca}}(UJ_2): \ g\in S\}$.
Since $I \subseteq T_{\text{Sca}}(UJ_2)$ we have by Lemma \ref{lemageradorGS} that $J(X)/T_{\text{Sca}}(UJ_2)=span \overline{S}$.

We shall prove that $\overline{S}$ is a linearly independent set. 
Let
\[f(y_1,\ldots ,y_r,z_1,\ldots ,z_n)=\sum_{g\in S} \lambda_g g \in T_{\text{Sca}}(UJ_2), \ \lambda_g \in K.\]

Since $K$ is an infinite field, every multihomogeneous component of $f$ belongs to
$T_{\text{Sca}}(UJ_2)$. Thus it is sufficient to suppose

\[f = \lambda y_1^{k_1} \cdots y_r^{k_r}(z_{i_1}z_{j_1})(z_{i_2}z_{j_2}) \cdots (z_{i_t}z_{j_t})
\ \ \mbox{or} \  \ 
f= \sum_{i_0=1}^n \lambda_{i_0} y_1^{k_1} \cdots y_r^{k_r}  g_{(z_{i_0},d)},\]
where $i_1 \leq j_1 \leq i_2 \leq j_2 \leq \ldots \leq i_t \leq j_t$; $t\geq 0$. We have to prove that $\lambda =
\lambda_{i_0}=0$ for all $i_0$.
Denote  
\[1 = e_{11}+e_{22}, \ a = e_{11}-e_{22} \  \mbox{and} \ b = e_{12}.\]

In the first case, let $Y_i = 1$ and $Z_i = a$ for all $i$. Then
\[f(Y_1,\ldots,Y_r,Z_1, \ldots, Z_n) = \lambda 1=0\]
and so $\lambda = 0$.

In the second case, let $Y_i = 1$ for all $i$, $Z_{i_0} = a + b$, $Z_i=a$ for all $i\neq i_0$. 
Since $ab = b^2 = 0$ we obtain 
\[f(Y_1,\ldots,Y_r,Z_1, \ldots, Z_n) =(\lambda_1+\ldots +\lambda_n)a+\lambda_{i_0}b=0\]
and so $\lambda_{i_0}=0$ as desired.

Therefore, the set $\overline{S}$ is a basis for the quotient vector space $J(X)/T_{\text{Sca}}(UJ_2)$.
Moreover, since $I \subseteq T_{\text{Sca}}(UJ_2)$, by Lemma \ref{lemageradorGS} we have 
$I=T_{\text{Sca}}(UJ_2) $.
\end{proof}

\begin{remark}
There is a missing identity in the statement of \cite[Proposition 8]{faco}. But that missing identity was used in the proof of 
\cite[Proposition 8]{faco}. Here we give the complete list of these identities.
\end{remark}

\subsection{The scalar grading, when $K$ is a finite field }
In this subsection we describe the $\mathbb{Z}_2$-graded identities for $UJ_2$ with the scalar grading when $K$ is finite. Throughout this subsection, $K$ is a finite field with $|K| = q$ elements and char$(K)\neq 2$.

\begin{lemma}\label{lemapotenciadezgraduacaoescalar}
If $u = \alpha a + \beta b$, where $\alpha, \beta \in K$, $a = e_{11}-e_{22}$ and $b = e_{12}$, then
\begin{enumerate}
\item[i)] $u^{2n} = \alpha^{2n} 1,$
\item[ii)] $u^{2n-1} = \alpha^{2n-1}a+\alpha^{2n-2}\beta b,$
\end{enumerate}
for all $n \in \mathbb{N}$. In particular, $u^q = \alpha a + \alpha^{q-1}\beta b$.
\end{lemma}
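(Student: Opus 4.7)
The plan is a straightforward induction on $n$ using the three defining relations $a \circ a = 1$, $a \circ b = 0$, $b \circ b = 0$ recorded at the start of Section~2, together with the observation that $1 = e_{11}+e_{22}$ acts as the unit of $\circ$ (since $1 \circ v = \tfrac{1}{2}(v+v) = v$). I note first that powers $u^n$ in $(UJ_2,\circ)$ are unambiguous by power-associativity of Jordan algebras, or equivalently because $u$ commutes with itself in $UT_2$, so Jordan and associative powers of a single element coincide.

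For the base case, statement (ii) with $n=1$ is just $u^1 = u = \alpha a + \beta b$, and statement (i) with $n=1$ is the direct expansion
\[
u^2 = u \circ u = \alpha^2(a \circ a) + 2\alpha\beta(a \circ b) + \beta^2(b \circ b) = \alpha^2 \cdot 1.
\]

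For the inductive step I would alternate: assuming (i) at level $n$, compute
\[
u^{2n+1} = u^{2n} \circ u = \alpha^{2n}(1 \circ u) = \alpha^{2n} u = \alpha^{2n+1} a + \alpha^{2n}\beta b,
\]
which is (ii) at level $n+1$; then assuming (ii) at level $n+1$, compute
\[
u^{2n+2} = (\alpha^{2n+1} a + \alpha^{2n}\beta b) \circ (\alpha a + \beta b),
\]
and expand, using $a \circ a = 1$ and $a \circ b = b \circ b = 0$ to see that all cross-terms die and only the $(a \circ a)$-term survives, giving $\alpha^{2n+2}\cdot 1$ and completing (i) at level $n+1$.

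For the in particular claim, since $\mathrm{char}(K) \neq 2$ the cardinality $q = |K|$ is odd, so $q = 2n-1$ for some $n \in \mathbb{N}$, and part (ii) yields $u^q = \alpha^q a + \alpha^{q-1}\beta b$; applying $\alpha^q = \alpha$ (valid for all $\alpha \in K$) gives the stated formula. I do not expect any real obstacle: the whole argument is essentially bookkeeping made clean by the fact that even powers of $u$ collapse to a scalar multiple of the unit, so the inductive step never grows in complexity.
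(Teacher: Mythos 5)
Your proposal is correct and follows exactly the paper's argument: induction on $n$ using $a\circ a=1$, $a\circ b=b\circ b=0$ (the paper merely states ``by induction'' without writing out the alternating step you supply), followed by the observation that $q$ is odd and $\alpha^q=\alpha$ for the final claim. No discrepancies.
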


\begin{proof}
By using induction on $n$ we can prove  i) and ii). Now, since $p\neq 2$ we have $p$ odd. Thus $q$ is odd and    
 \[u^q = \alpha^q a + \alpha^{q-1}\beta b=\alpha a + \alpha^{q-1}\beta b.\]
The lemma is proved.
\end{proof}

\begin{lemma}\label{lemaGSf}
The polynomials
\[\  y_1^q - y_1 \ \text{and} \ (z_1^q - z_1)z_2\]
belong to $T_{\text{Sca}}(UJ_2)$.
\end{lemma}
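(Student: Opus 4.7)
The plan is a direct verification, using the explicit description of the scalar grading together with Lemma \ref{lemapotenciadezgraduacaoescalar} and the identity $\alpha^q=\alpha$ valid for every $\alpha\in K$.

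First I would handle $y_1^q-y_1$. Under the scalar grading, $(UJ_2)_0=K\cdot 1$, so any admissible substitution has the form $Y_1=\alpha\cdot 1$ with $\alpha\in K$. Since $1$ is the identity of $UJ_2$, $Y_1^q=\alpha^q\cdot 1=\alpha\cdot 1=Y_1$, so $Y_1^q-Y_1=0$. Nothing subtler is needed here.

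For $(z_1^q-z_1)z_2$ I would take an arbitrary substitution $Z_1=\alpha_1 a+\beta_1 b$, $Z_2=\alpha_2 a+\beta_2 b$ with $\alpha_i,\beta_i\in K$, where $a=e_{11}-e_{22}$ and $b=e_{12}$. By Lemma \ref{lemapotenciadezgraduacaoescalar} we have $Z_1^q=\alpha_1 a+\alpha_1^{q-1}\beta_1 b$, hence
\[
Z_1^q-Z_1=(\alpha_1^{q-1}-1)\beta_1\, b.
\]
Thus $Z_1^q-Z_1$ is always a scalar multiple of $b$ (note that even in the degenerate case $\alpha_1=0$ the formula still gives $-\beta_1 b$, matching $Z_1^q-Z_1=-Z_1=-\beta_1 b$). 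Now $(Z_1^q-Z_1)\circ Z_2=(\alpha_1^{q-1}-1)\beta_1\bigl(\alpha_2(b\circ a)+\beta_2(b\circ b)\bigr)=0$, since $a\circ b=0=b\circ b$. Therefore $(z_1^q-z_1)z_2$ vanishes on $UJ_2$ with the scalar grading.

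There is no real obstacle in this lemma: once one has Lemma \ref{lemapotenciadezgraduacaoescalar} and the observation that $a\circ b=b\circ b=0$, the proof reduces to a one-line computation for each of the two polynomials. The only mild subtlety is noticing that $Z_1^q-Z_1$ is a multiple of $b$ (so it lies in the annihilator of $(UJ_2)_1$ under the Jordan product), which is what makes the second identity hold.
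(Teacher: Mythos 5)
Your proof is correct and follows essentially the same route as the paper: the first polynomial is checked by the scalar computation $\alpha^q=\alpha$, and the second by combining Lemma \ref{lemapotenciadezgraduacaoescalar} with $a\circ b=b\circ b=0$, which is exactly the paper's argument (the paper only writes out the second polynomial, leaving the first to the reader).
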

\begin{proof}
We will check the last polynomial only. Since $ab=b^2=0$, we have by Lemma \ref{lemapotenciadezgraduacaoescalar}
that $(z_1^q - z_1)z_2\in T_{\text{Sca}}(UJ_2)$.
\end{proof}

\begin{notation}
Let $I'$ be the $T_{\mathbb{Z}_2}$-ideal of $J(X)$ generated by the polynomials in Lemmas \ref{lemaGS} and \ref{lemaGSf}. 
\end{notation}

\begin{lemma}\label{lemageradorGSf}
Let $\widehat{S}$ be the subset of $J(X)$ formed by all polynomials
\begin{enumerate}
\item[(a)] $Y_1'Z_1'$ and
\item[(b)] $Y_1'(z_{i_0}Z_1')$,
\end{enumerate}
where $Y_1'=y_1^{k_1} \cdots y_r^{k_r}$; $0 \leq k_i < q$ for all $i$; $r\geq 0$;  $Z_1'=(z_{i_1}z_{j_1})(z_{i_2}z_{j_2}) \cdots (z_{i_t}z_{j_t})$; $i_1 \leq j_1 \leq i_2 \leq j_2 \leq \ldots \leq i_t \leq j_t $; $t\geq 0$; $0\leq \deg_{z_k}(Z_1')<q$ for all $k$; 
$i_0 \geq 1$. Then the quotient vector space $J(X)/I'$ is spanned by the set of all elements $g+I'$ where $g\in \widehat{S}$.

\end{lemma}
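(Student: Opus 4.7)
Since $I\subseteq I'$, Lemma~\ref{lemageradorGS} implies that $J(X)/I'$ is spanned by the classes of the monomials $Y'Z'$ and $Y'(z_{i_0}Z')$ of the forms given there, but without the $q$-bound on exponents. It suffices to show that each such monomial may be rewritten modulo $I'$ so that additionally $0\le \deg_{y_i}(Y')<q$ and $0\le \deg_{z_k}(Z')<q$ for every $i,k$. I handle the $y$- and $z$-reductions separately; the ideal closure of $I'$ under multiplication propagates any local rewrite to the whole monomial.

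For the $y$-degrees: by Lemma~\ref{lemaconsequenciasGS}(a) together with graded substitution, every three-element associator containing at least one even slot lies in $I'$; in particular the $y$-subalgebra of $J(X)/I'$ is commutative and associative. So $Y'$ can be rearranged to expose a factor $y_i^q$, which is even and hence commutes past the remainder of the monomial via those associator identities. Since $y_1^q-y_1\in I'$ (Lemma~\ref{lemaGSf}), we have $y_i^q\equiv y_i$ modulo $I'$, and iterating brings each $k_i$ into $\{0,1,\dots,q-1\}$.

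For the $z$-degrees, the starting point is $z_k^{q+1}-z_k^2\in I'$, obtained by substituting $z_2:=z_1$ into $(z_1^q-z_1)z_2\in I'$; by Jordan power-associativity this gives $z_k^n\equiv z_k^{n-(q-1)}$ modulo $I'$ for every $n\ge q+1$. Now suppose $\deg_{z_k}(Z')=m\ge q$; by Lemma~\ref{lemapermutarz} I may permute the pairs of $Z'$ so that all $z_k$-occurrences come first. If $m=2a$ is even, then $Z'\equiv (z_kz_k)^a\cdot W=z_k^{2a}\cdot W$ with $W$ free of $z_k$, and replacing $z_k^{2a}$ by $z_k^{2a-(q-1)}=(z_kz_k)^{a-(q-1)/2}$ is legitimate because $I'$ is an ideal. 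If $m=2a+1$ is odd, then some pair $(z_kz_l)$ with $l\ne k$ must occur (otherwise $m=2t$ would be even), so rearrange $Z'\equiv (z_kz_k)^a(z_kz_l)W=z_k^{2a}(z_kz_l)W$. Using the associator $(z_k^{2a},z_k,z_l)\equiv 0$ modulo $I'$ (its first slot is even), rewrite $z_k^{2a}(z_kz_l)\equiv z_k^{2a+1}z_l$; apply $z_k^{2a+1}\equiv z_k^{2a+1-(q-1)}$; then re-expand via the same associator identity in reverse to return to pair form $(z_kz_k)^{a-(q-1)/2}(z_kz_l)$. Iterate in either parity case until $\deg_{z_k}(Z')<q$; these rewrites inside $Z'$ extend to the whole monomial in form (a) or (b) by ideal closure of $I'$.

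The main obstacle is the $z$-reduction: one must carefully track parities at each step so that every collapsed associator has at least one even slot (and hence lies in $I'$ by Lemma~\ref{lemaconsequenciasGS}(a)), and one must verify that each reduction step produces an expression still expressible as a product of $z$-pairs (guaranteed by Jordan power-associativity). Once the above reductions are complete the monomial lies in $\widehat{S}$, so $\widehat{S}$ spans $J(X)/I'$.
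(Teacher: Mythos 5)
Your proposal is correct and follows essentially the same route as the paper: invoke Lemma \ref{lemageradorGS}, reduce the $y$-exponents with $y_1^q-y_1$, regroup the $z$-pairs via Lemma \ref{lemapermutarz} so that the $z_k$'s are paired together, and collapse $\deg_{z_k}$ by $q-1$ at a time using $(z_1^q-z_1)z_2$. One boundary point to patch: in your odd case with $\deg_{z_k}(Z')=2a+1=q$, the element congruence $z_k^{2a+1}\equiv z_k^{2a+1-(q-1)}$ you invoke would read $z_k^q\equiv z_k$, which does \emph{not} lie in $I'$ (the generator is $(z_1^q-z_1)z_2$, not $z_1^q-z_1$); there you must instead apply the generator directly to the product $z_k^{q}z_l$, which is exactly the computation the paper displays.
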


\begin{proof}
Let $A$ and $B$ be the sets of all elements $g+I'$ where $g$ is in (a) and (b), respectively. Denote $C=A\cup B$. 
If $f(y_1, \ldots, y_r, z_1, \ldots, z_s)$ is a monomial in $J(X)$, we shall prove that $f+I' \in span C$.

Since $I \subseteq I'$ we have by Lemma \ref{lemageradorGS} that the quotient vector space $J(X)/I'$ is spanned by the set of all polynomials:
\begin{enumerate}
\item[(a')] $Y'Z'+I'$,
\item[(b')] $Y'(z_{i_0}Z')+I'$,
\end{enumerate}
where $Y'=y_1^{k_1} \cdots y_r^{k_r}$; $k_i\geq 0 $ for all $i$; $r\geq 0$;  $Z'=(z_{i_1}z_{j_1})(z_{i_2}z_{j_2}) \cdots (z_{i_t}z_{j_t})$; $i_1 \leq j_1 \leq i_2 \leq j_2 \leq \ldots \leq i_t \leq j_t $; $t\geq 0$;  
$i_0 \geq 1$.

Let $A'$ and $B'$ be the sets of all elements in (a') and (b'), respectively. We shall prove that $A' \cup B' \subseteq span C$.

\

\noindent \emph{Case 1.} $f+I' \in A'$.

 \noindent In this case, $f+I' = (y_1^{k_1} \cdots y_r^{k_r})(z_{i_1}z_{j_1})(z_{i_2}z_{j_2}) \cdots (z_{i_t}z_{j_t}) + I'$.
Since $(y_i^q - y_i) \in I'$ we can suppose $0 \leq k_1, \ldots, k_r < q$.

Now,
\[\underbrace{(z_{k}z_{k}) \cdots (z_{k}z_{k})}_{(q-1)/2 \ \mbox{times}}(z_kz_l)=
z_{k}^{q-1}(z_kz_l)\]
and $z_{k}^{q-1}(z_kz_l)+I'=(z_{k}^{q-1}z_k)z_l+I'=(z_{k}^q)z_l+I'=z_kz_l+I'$ (see Lemma \ref{lemaGSf}).
Thus, we can suppose 
$0\leq \deg_{z_k}Z'<q$ for all $k$ and so $f+I'\in A \subseteq span C$.

\

\noindent \emph{Case 2.} $f+I' \in B'$.

\noindent We can use analogous argument to prove $f+I' \in B \subseteq span C$.

The lemma is proved. 
\end{proof}

\begin{theorem}\label{teoremaprincipalGS}
If $K$ is a finite field with $|K|=q$ elements and  char$(K)\neq 2$ then $I'=T_{\text{Sca}}(UJ_2)$, that is, $T_{\text{Sca}}(UJ_2)$ is generated, as a $T_{\mathbb{Z}_2}$-ideal, by 
the polynomials in  Lemmas \ref{lemaGS} and \ref{lemaGSf}. Moreover, the set in Lemma \ref{lemageradorGSf} is a basis for the
quotient vector space $J(X)/I'$.
\end{theorem}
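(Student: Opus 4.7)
The plan is to mimic the pattern of Theorem \ref{teoremaprincipalGA} adapted to the scalar grading. By Lemmas \ref{lemaGS} and \ref{lemaGSf} we immediately get $I' \subseteq T_{\text{Sca}}(UJ_2)$, and Lemma \ref{lemageradorGSf} then forces $\overline{S} = \{g + T_{\text{Sca}}(UJ_2) : g \in \widehat{S}\}$ to span $J(X)/T_{\text{Sca}}(UJ_2)$. So once I prove that $\overline{S}$ is linearly independent, both assertions of the theorem (the equality $I' = T_{\text{Sca}}(UJ_2)$ and the basis claim) follow at once.

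For the independence, I would take $f = \sum_{g \in \widehat{S}} \lambda_g\, g \in T_{\text{Sca}}(UJ_2)$. Every $g \in \widehat{S}$ has $\deg_{y_i}(g) < q$, so Proposition \ref{propositconseq} applies in each $y$-variable and reduces me to the case where $f$ is $y$-multihomogeneous of some fixed multidegree $(k_1, \ldots, k_r)$. I would then evaluate $Y_i = 1$ and $Z_j = \alpha_j a + \beta_j b$ for indeterminate $\alpha_j, \beta_j \in K$. Because $a \circ a = 1$ and $a \circ b = b \circ b = 0$, every Jordan product $(z_iz_j)$ collapses to the scalar $\alpha_i\alpha_j$, so any $Z_1'$ evaluates to the scalar $\prod_j \alpha_j^{\deg_{z_j}(Z_1')}$. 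Consequently, the type (a) elements $Y_1' Z_1'$ contribute only to the $K\cdot 1$ summand of $UJ_2$, while the type (b) elements $Y_1'(z_{i_0}Z_1')$ contribute to the $Ka$ and $Kb$ summands with coefficients $\alpha_{i_0}$ and $\beta_{i_0}$ respectively, both multiplied by the same scalar coming from $Z_1'$.

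Projecting the identity $f=0$ onto $K\cdot 1$ yields
\[
\sum_{g\ \text{of type (a)}} \lambda_g \prod_{j} \alpha_j^{\deg_{z_j}(g)} = 0
\]
as a polynomial function on $K^n$ with all exponents $<q$; since the canonical ordered form of $Z_1'$ is uniquely determined by its multidegree, distinct type (a) elements produce distinct monomials, and Lemma \ref{lemapolcomutativo} forces every such $\lambda_g$ to vanish. Projecting onto $Kb$ gives
\[
\sum_{g\ \text{of type (b)}} \lambda_g\, \beta_{i_0(g)} \prod_{j} \alpha_j^{\deg_{z_j}(Z_1'(g))} = 0,
\]
where every $\beta_k$ appears linearly and every $\alpha_j$ to a power $\deg_{z_j}(Z_1'(g)) < q$. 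Distinct type (b) elements again give rise to distinct monomials $\beta_{i_0}\prod_j\alpha_j^{\deg_{z_j}(Z_1')}$, so Lemma \ref{lemapolcomutativo} forces all remaining $\lambda_g = 0$.

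The main subtlety I expect is that a type (b) basis element may carry $\deg_{z_{i_0}} = q$ (precisely when $\deg_{z_{i_0}}(Z_1') = q-1$), which would block a direct application of Proposition \ref{propositconseq} in the $z$-variables and rules out an unceremonious $z$-multihomogenization of $f$. The evaluation strategy above sidesteps this because the projection onto $Kb$ absorbs exactly one copy of $z_{i_0}$ into the linear factor $\beta_{i_0}$, so the residual exponent of $\alpha_{i_0}$ drops back to at most $q-1$, which is precisely the range in which Lemma \ref{lemapolcomutativo} operates without any reduction.
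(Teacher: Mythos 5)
Your proposal is correct and follows essentially the same route as the paper: the key step in both is the generic substitution $Y_i=1$, $Z_j=\alpha_j a+\beta_j b$ combined with Lemma \ref{lemapolcomutativo}, your projection onto $Kb$ being exactly the paper's treatment of the odd part, while your projection onto $K\cdot 1$ merely repackages the paper's handling of the even part (which instead kills the pure-$y$ coefficient by setting the $z$'s to zero and then $z$-multihomogenizes before evaluating at $z_i=a$). Your remark that $\deg_{z_{i_0}}$ can reach $q$ in a type (b) element, blocking a blanket $z$-multihomogenization, is precisely why the paper also treats the odd part by evaluation rather than by Proposition \ref{propositconseq}.
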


\begin{proof}
By Lemmas \ref{lemaGS} and \ref{lemaGSf} we have $I' \subseteq T_{\text{Sca}}(UJ_2)$.

Consider the subset $\widehat{S}$ in  Lemma \ref{lemageradorGSf} and write 
$\overline{S}=\{g+T_{\text{Sca}}(UJ_2): \ g\in \widehat{S}\}$.
Since $I' \subseteq T_{\text{Sca}}(UJ_2)$ we have by Lemma \ref{lemageradorGSf} that $J(X)/T_{\text{Sca}}(UJ_2)=span \overline{S}$.
We shall prove that $\overline{S}$ is a linearly independent set. 

If $k=(k_1,\ldots ,k_r)$ we write $y_1^{k_1} \cdots y_r^{k_r}=y^{[k]}$. If $g=(z_{i_1}z_{j_1}) \cdots (z_{i_t}z_{j_t})$,
$\deg_{z_i}g=d_i$ and $d=(d_1,\ldots ,d_n)$ then we write $g=z^{[d]}$.
Let
\[f(y_1,\ldots , y_r,z_1, \ldots ,z_n)=\sum_{g\in \widehat{S}} \lambda_gg\in T_{\text{Sca}}(UJ_2),\]
where $\lambda_g \in K$.
We will prove that $\lambda_g=0$ for all $g$. Since $\deg_{y_i}f<q$ for all $i$ we can suppose
\[f=\lambda y^{[k]}+\sum_{d}\lambda_d y^{[k]}z^{[d]}+\sum_{i=1}^n\sum_{d}\lambda_{(i,d)} y^{[k]}(z_iz^{[d]}).\]
By replacing $y_i$ and $z_i$ by the matrices $1$ and $0$ respectively, for all $i$, we obtain 
$0=f(1,\ldots,1,0,\ldots,0)=\lambda \cdot 1$ and so  $\lambda =0$. Now, since 
$T_{\text{Sca}}(UJ_2)$ is a $\mathbb{Z}_2$-graded ideal of $J(X)$ it follows that 
$f_1,f_2 \in  T_{\text{Sca}}(UJ_2)$ where 
\[f_1=\sum_{d}\lambda_d y^{[k]}z^{[d]} \ \ \mbox{and} \ \ f_2=\sum_{i=1}^n\sum_{d}\lambda_{(i,d)} y^{[k]}(z_iz^{[d]}).\]
Since $\deg_{z_i}f_1<q$ for all $i$ it follows that every $g_d=\lambda_d y^{[k]}z^{[d]} \in T_{\text{Sca}}(UJ_2)$. 
By replacing $y_i$ and $z_i$ by the matrices $1$ and $a=e_{11}-e_{22}$ respectively, for all $i$, we obtain 
$0=g_d(1,\ldots,1,a,\ldots,a)=\lambda_d \cdot 1$ and so  $\lambda_d =0$.

Denote $a = e_{11}-e_{22}$, $b = e_{12}$ and $Z_i = \alpha_i a + \beta_i b$, where $\alpha_i, \beta_i \in K$. We have
\[f_2(1, \ldots, 1, Z_1, \ldots, Z_n) = \alpha a + \beta b = 0, \ \mbox {where} \ \ \beta=\sum_{i=1}^n\sum_{d}\lambda_{(i,d)} \beta_i\alpha_1^{d_1} \cdots \alpha_n^{d_n}\]
and $\alpha \in K$. Since $\alpha_i,\beta_i $ are any elements of $K$  for all $i$, $\beta =0$ , $\deg_{\alpha_i}\beta <q$ 
and $\deg_{\beta_i}\beta <q$ it follows that $\lambda_{(i,d)}=0$ for all $i,d$. See Lemma \ref{lemapolcomutativo}.

Therefore, the set $\overline{S}$ is a basis for the quotient vector space $J(X)/T_{\text{Sca}}(UJ_2)$.
Moreover, since $I' \subseteq T_{\text{Sca}}(UJ_2)$, by Lemma \ref{lemageradorGSf} we have 
$I'=T_{\text{Sca}}(UJ_2)$ and the theorem is proved.
\end{proof}

\section{The classical grading }
Let $T_{\text{Cla}}(UJ_2)$ be the set of all $\mathbb{Z}_2$-graded polynomial identities for $UJ_2$ with the classical grading. 
In this section we will describe $T_{\text{Cla}}(UJ_2)$ for any field $K$ of char$(K)=p \neq 2$.

We remember that 
\[UJ_2=\left(UJ_2 \right)_0\oplus \left(UJ_2 \right)_1,\]
where 
\[\left(UJ_2 \right)_0 =span\{e_{11}+e_{22}, \ e_{11}-e_{22} \} \ \mbox{and} \ \left(UJ_2 \right)_1 =span\{e_{12}\}. \]

\begin{lemma}\label{lemaGC}
The polynomials 
\[(y_1,y_2,y_3) , \ \  z_1z_2 \ \ \text{and} \ \   (y_1, z_1, y_2) \]
belong to $T_{\text{Cla}}(UJ_2)$.
\end{lemma}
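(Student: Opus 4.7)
The plan is to verify each of the three identities by direct computation, substituting arbitrary homogeneous elements written in the basis $1=e_{11}+e_{22}$, $a=e_{11}-e_{22}$, $b=e_{12}$ and using the multiplication table $a\circ a=1$, $a\circ b=0$, $b\circ b=0$, together with the fact that $1$ is the Jordan identity. Since the statement is explicitly a computational check and the lemma itself just says \emph{direct verification}, I do not expect any technical obstacle; the task is merely to be clean about the case analysis so that each verification is unambiguous.

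First, for $(y_1,y_2,y_3)$: the even component $(UJ_2)_0=K\cdot 1\oplus K\cdot a$ is closed under $\circ$, and from $a\circ a=1$ it is isomorphic (as a Jordan algebra) to the associative commutative algebra $K[t]/(t^2-1)$. Since every associator vanishes on an associative algebra, $(Y_1,Y_2,Y_3)=0$ for all $Y_1,Y_2,Y_3\in (UJ_2)_0$.

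Second, for $z_1z_2$: any odd element has the form $Z=\eta b$ with $\eta\in K$, and since $b\circ b=0$, we immediately get $Z_1\circ Z_2=\eta_1\eta_2(b\circ b)=0$ for every $Z_1,Z_2\in (UJ_2)_1$.

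Third, for $(y_1,z_1,y_2)$: by multilinearity in the homogeneous entries it suffices to evaluate on $Y_i=\alpha_i\cdot 1+\beta_i\cdot a$ and $Z=\eta\cdot b$. Using $1\circ b=b$ and $a\circ b=0$, one computes
\[
Y_i\circ Z=(\alpha_i\cdot 1+\beta_i\cdot a)\circ(\eta\cdot b)=\alpha_i\eta\, b,
\]
and then
\[
(Y_1\circ Z)\circ Y_2=(\alpha_1\eta\, b)\circ(\alpha_2\cdot 1+\beta_2\cdot a)=\alpha_1\alpha_2\eta\, b,
\]
\[
Y_1\circ(Z\circ Y_2)=(\alpha_1\cdot 1+\beta_1\cdot a)\circ(\alpha_2\eta\, b)=\alpha_1\alpha_2\eta\, b,
\]
which agree, so $(Y_1,Z,Y_2)=0$. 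This completes all three verifications and establishes that the listed polynomials lie in $T_{\text{Cla}}(UJ_2)$.
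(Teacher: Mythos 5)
Your verification is correct and matches the paper's approach exactly: the paper simply states ``direct verification,'' and your computations (associativity of the diagonal even part, $b\circ b=0$ for the odd part, and the explicit check of $(Y_1,Z,Y_2)=0$ using $1\circ b=b$ and $a\circ b=0$) are the intended ones. No issues.
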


\begin{proof}
The proof is a direct verification.
\end{proof}

\begin{notation} Let $I$ be the $T_{\mathbb{Z}_2}$-ideal of $J(X)$ generated by the polynomials in Lemma \ref{lemaGC}. 
\end{notation}
Define the equivalence relation $\equiv$ on $J(X)$ as follows: if $f,g \in J(X)$ then
\[f\equiv g \Leftrightarrow f+I=g+I.\]

\begin{lemma}\label{novaidentidadeGC}
The polynomial
\[y_1(y_2(y_3 z_1)) - \frac{1}{2} \left( y_1(z_1(y_2y_3)) + y_2(z_1(y_1y_3))+ y_3(z_1(y_1y_2)) - z_1(y_1(y_2y_3))\right)\]
belongs to $I$.
\end{lemma}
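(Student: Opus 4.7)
The plan is to apply the basic Jordan identity~(\ref{igualdadebasica}) with the substitution $u = y_1$, $v = y_2$, $c = y_3$, $d = z_1$, which gives
\[((y_1y_2)y_3)z_1 + ((y_1z_1)y_3)y_2 + ((y_2z_1)y_3)y_1 = (y_1y_2)(y_3z_1) + (y_1y_3)(y_2z_1) + (y_1z_1)(y_2y_3),\]
and then reduce each of the six monomials modulo $I$ to one of the five terms appearing in the statement, which I denote $A := y_1(y_2(y_3z_1))$, $B_i := y_i(z_1(y_jy_k))$ for $\{i,j,k\} = \{1,2,3\}$ with $j<k$, and $C := z_1(y_1(y_2y_3))$.

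On the LHS, the term $((y_1y_2)y_3)z_1$ collapses to $C$ via $(y_1,y_2,y_3)\in I$ and commutativity. I would then show $((y_1z_1)y_3)y_2 \equiv A$ and $((y_2z_1)y_3)y_1 \equiv A$ modulo $I$. The chain for $((y_1z_1)y_3)y_2$ runs: commute to $y_2((y_1z_1)y_3)$; re-associate to $(y_2(y_1z_1))y_3$ using the substitution instance $(y_2, y_1z_1, y_3) \in I$ (pattern $(y,z,y)$); simplify $y_2(y_1z_1) = (y_1z_1)y_2 \equiv y_1(z_1y_2) = y_1(y_2z_1)$ via $(y_1, z_1, y_2) \in I$ and commutativity; re-associate $(y_1(y_2z_1))y_3 \equiv y_1((y_2z_1)y_3)$ using $(y_1, y_2z_1, y_3) \in I$; and finally $(y_2z_1)y_3 \equiv y_2(z_1y_3) = y_2(y_3z_1)$ via $(y_2, z_1, y_3) \in I$. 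A slight variant (commuting the innermost factor first, so as to reduce to the previous case) handles $((y_2z_1)y_3)y_1$. Therefore the LHS is $\equiv C + 2A \pmod{I}$.

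On the RHS, each summand $(y_iy_j)(y_kz_1)$ is $\equiv y_k(z_1(y_iy_j)) = B_k$ modulo $I$: commute $y_kz_1 = z_1y_k$; re-associate $(y_iy_j)(z_1y_k) \equiv ((y_iy_j)z_1)y_k$ using $(y_iy_j, z_1, y_k) \in I$; and apply commutativity twice. Summing, the RHS becomes $B_3 + B_2 + B_1$ modulo $I$. Equating both sides and dividing by $2$ (valid because $\mathrm{char}(K) \neq 2$) gives $A \equiv \tfrac{1}{2}(B_1 + B_2 + B_3 - C) \pmod{I}$, which is exactly the asserted identity. The main obstacle is the LHS reduction: the natural shortcut associators $(y_1, y_2, z_1)$ and $(z_1, y_1, y_2)$, with patterns $(y,y,z)$ and $(z,y,y)$, are \emph{not} in $I$ — indeed they fail as $\mathbb{Z}_2$-graded polynomial identities of $UJ_2$ with the classical grading — so every step of each chain must be arranged so that any associator invoked has its middle entry odd and its outer entries even, matching the $(y, z, y)$ pattern of the generator $(y_1, z_1, y_2)$; commutativity is used repeatedly to set up this pattern.
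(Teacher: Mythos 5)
Your proof is correct and takes essentially the same route as the paper's: both arguments rest on the degree-four Jordan identity (\ref{igualdadebasica})/(\ref{igualdadebasica2}) specialized to three even variables and one odd one, followed by reduction modulo the generators $(y_1,y_2,y_3)$ and $(y_1,z_1,y_2)$ of $I$, the key point in both being that only the even--odd--even associator pattern is available (as you rightly stress). The paper applies the rearranged form (\ref{igualdadebasica2}) with $(u,v,c,d)=(y_2,y_3,z_1,y_1)$ and recognizes the remainder at once as a sum of associators with odd middle entry, whereas you apply (\ref{igualdadebasica}) with $(u,v,c,d)=(y_1,y_2,y_3,z_1)$ and normalize the six monomials term by term --- a cosmetic difference only.
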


\begin{proof}
Let \[f = 2y_1(y_2(y_3 z_1)) -  y_1(z_1(y_2y_3)) - y_2(z_1(y_1y_3)) - y_3(z_1(y_1y_2)) + z_1(y_1(y_2y_3)).\]
If $u=y_2, \ v=y_3, \ c=z_1$ and $d=y_1$ in (\ref{igualdadebasica2}) we obtain
\begin{align*}
f & = 2((y_3z_1)y_2)y_1 - ((z_1y_2)y_1)y_3 - ((z_1y_3)y_1)y_2 \\
  & = (y_3,z_1,y_2)y_1 + (y_2, z_1y_3, y_1) + (y_3, z_1y_2, y_1).
\end{align*}
Since $(y_1, z_1, y_2) \in I$ it follows that $f \in I$ and so
\[y_1(y_2(y_3 z_1)) - \frac{1}{2} \left( y_1(z_1(y_2y_3)) + y_2(z_1(y_1y_3))+ y_3(z_1(y_1y_2)) - z_1(y_1(y_2y_3))\right) = \frac{1}{2}f \in I.\]
\end{proof}

\begin{lemma}\label{lemavariaveisycomutameassociamGC}
The subalgebra of $J(X)/I$ generated by the set 
\[Y+I=\{y+I \ : \ y\in Y \}\]
is commutative and associative. 
\end{lemma}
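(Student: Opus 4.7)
The plan is to dispose of commutativity immediately and then reduce associativity to the defining relation $(y_1,y_2,y_3)\in I$ via a substitution argument.

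For commutativity, any element of the subalgebra generated by $Y+I$ is a coset $f+I$ with $f$ a polynomial in the $y$-variables, and $J(X)$ itself is commutative as a Jordan algebra, so commutativity of the subalgebra is inherited.

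For associativity, let $f_1,f_2,f_3$ lie in the subalgebra of $J(X)/I$ generated by $Y+I$. Lift them to polynomials (still called $f_1,f_2,f_3$) in the variables from $Y$ alone. Each such polynomial is a $K$-linear combination of monomials in the $y$-variables, and since each generator $y_i$ is even, every such monomial lies in $(J(X))_0$. Because the associator $(\,\cdot\,,\,\cdot\,,\,\cdot\,)$ is trilinear, it suffices to show $(m_1,m_2,m_3)\in I$ whenever $m_1,m_2,m_3$ are monomials in $Y$. Now the key point is that $I$ is a $T_{\mathbb{Z}_2}$-ideal, so it is closed under every $\mathbb{Z}_2$-graded endomorphism of $J(X)$; in particular it is closed under the substitution $y_1\mapsto m_1$, $y_2\mapsto m_2$, $y_3\mapsto m_3$, which is graded because each $m_i$ is even of the same grade as $y_i$. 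Applying this substitution to the element $(y_1,y_2,y_3)\in I$ yields $(m_1,m_2,m_3)\in I$, as desired.

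There is no real obstacle here: the whole statement is a direct consequence of (i) the commutativity of the free Jordan algebra, (ii) the trilinearity of the associator, and (iii) the $T_{\mathbb{Z}_2}$-invariance of $I$ combined with the presence of $(y_1,y_2,y_3)$ among its generators. The only thing to watch is that the substitution used to invoke $T_{\mathbb{Z}_2}$-invariance respects the grading, which is automatic since monomials in the even variables $y_i$ are themselves even.
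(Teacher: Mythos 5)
Your proof is correct and follows essentially the same route as the paper, which likewise disposes of commutativity by inheritance from $J(X)$ and deduces associativity from $(y_1,y_2,y_3)\in I$ together with the $T_{\mathbb{Z}_2}$-invariance of $I$; you merely spell out the substitution argument that the paper leaves implicit. (The reduction to monomials via trilinearity is not even needed, since any polynomial in the $y$-variables is already even and can be substituted directly.)
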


\begin{proof}
Let $A_Y$ be the subalgebra of $J(X)/I$ generated by the set $Y+I$. The algebra $J(X)$ is commutative, thus $A_Y$ is commutative too. 
Since  $(y_1,y_2,y_3) \in I$ we have that $A_Y$ is associative, and the lemma is proved.
\end{proof}


\begin{notation}If $Y' = y_{i_1} \cdots y_{i_r}$ and $Y'' = y_{j_1} \cdots y_{j_s}$ are monomials in $J(Y)$ such that
$r,s \geq 0$, $ i_1\leq i_2 \leq \ldots \leq i_r$ and $ j_1\leq j_2 \leq \ldots \leq j_s$, we will write
 $Y' < Y''$ if 
\begin{enumerate}
\item[i)] $r<s$ or
\item[ii)] $r = s$ and  $i_1=j_1, \ i_2=j_2, \ \ldots , \ i_l=j_l, \ i_{l+1}<j_{l+1}$ for some $l$. 
\end{enumerate}

\end{notation}

\begin{lemma}\label{lemageradorGC}
Let $S$ be the subset of $J(X)$ formed by all polynomials
\begin{enumerate}
\item[(a)] $Y'$ and
\item[(b)] $Y'(z_iY'')$,
\end{enumerate}
where $Y'=y_{i_1} \cdots y_{i_r}$; $r\geq 0$; $i_1 \leq \ldots \leq i_r$; $Y''=y_{j_1} \cdots y_{j_s}$; $s\geq 0$; $j_1 \leq \ldots \leq j_s$; $Y' \leq Y''$.
Then the quotient vector space $J(X)/I$ is spanned by the set of all elements $g+I$ where $g\in S$.
\end{lemma}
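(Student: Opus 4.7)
The plan is to show by induction on degree that every monomial $f \in J(X)$ is congruent modulo $I$ to a linear combination of elements of $S$. The argument splits into cases according to the number of $z$-variables in $f$.

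First, I will prove that every monomial with at least two $z$-variables lies in $I$. Since $z_1z_2 \in I$ and $I$ is a $T_{\mathbb{Z}_2}$-ideal, the $\mathbb{Z}_2$-graded substitution $z_1 \mapsto u$, $z_2 \mapsto v$ for arbitrary odd polynomials $u, v \in J(X)$ shows that the Jordan product $u\cdot v$ lies in $I$. Given a monomial $f$ with at least two $z$-variables, I would write $f=f_1f_2$ (the outermost product) and argue by induction on $\deg(f)$: if one of the factors already has at least two $z$-variables, apply the induction hypothesis; otherwise both factors have exactly one $z$-variable each, hence are odd, and the substitution above gives $f = f_1\cdot f_2 \in I$.

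Next, for monomials with exactly one $z$-variable $z_i$, I will induct on degree. The base case $f=z_i$ is already of the form (b) with $Y'=Y''=\emptyset$. In the inductive step, write $f=f_1f_2$ with $f_1$ the factor containing $z_i$ (using commutativity if necessary); then $f_2$ is a pure $y$-monomial. By induction $f_1 \equiv Y'_1(z_iY''_1)$ for some ordered $y$-products $Y'_1,Y''_1$, and by Lemma \ref{lemavariaveisycomutameassociamGC}, $f_2\equiv \bar{Y}_2$ for some ordered $y$-product $\bar{Y}_2$. Applying the identity $(y_1,z_1,y_2)\in I$ twice---via the $\mathbb{Z}_2$-graded substitutions $(y_1\mapsto Y'_1,\,z_1\mapsto z_iY''_1,\,y_2\mapsto\bar{Y}_2)$ and then $(y_1\mapsto Y''_1,\,z_1\mapsto z_i,\,y_2\mapsto\bar{Y}_2)$, combined with commutativity---yields
\[
f \equiv Y'_1\bigl(Y''_1(z_i\bar{Y}_2)\bigr).
\]
I would then apply Lemma \ref{novaidentidadeGC} under the substitution $(y_1,y_2,y_3,z_1)\mapsto(Y'_1,Y''_1,\bar{Y}_2,z_i)$: since $\bar{Y}_2z_i=z_i\bar{Y}_2$ by commutativity, the left-hand side becomes $f$ and the right-hand side is a linear combination of expressions already of the form $Y'(z_iY'')$, after reordering $y$-factors using Lemma \ref{lemavariaveisycomutameassociamGC}.

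Finally, to impose the ordering $Y' \leq Y''$ I would use the symmetry $Y'(z_iY'')\equiv Y''(z_iY')$, which follows by substituting $(y_1\mapsto Y',\,z_1\mapsto z_i,\,y_2\mapsto Y'')$ in $(y_1,z_1,y_2)\in I$ to get $(Y'z_i)Y''\equiv Y'(z_iY'')$, while commutativity gives $(Y'z_i)Y''=Y''(z_iY')$. The main obstacle I anticipate is the inductive step for one-$z$ monomials: specifically, recognizing that after two applications of $(y_1,z_1,y_2)\in I$ and commutativity, Lemma \ref{novaidentidadeGC} is exactly the tool needed to rewrite the resulting triply nested expression $Y'_1(Y''_1(z_i\bar{Y}_2))$ as a linear combination of canonical $Y'(z_iY'')$ terms.
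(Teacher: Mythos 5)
Your proposal is correct and follows essentially the same route as the paper: induct on degree, use $z_1z_2\in I$ to discard any monomial with two or more odd factors, apply Lemma \ref{novaidentidadeGC} to rewrite the product of a pure $y$-monomial with a term $Y_1'(z_iY_2')$ as a combination of canonical terms, and use $(y_1,z_1,y_2)\in I$ together with commutativity to impose the ordering $Y'\leq Y''$. The only cosmetic differences are your separate preliminary induction for monomials with at least two $z$-variables and the extra reassociation step before invoking Lemma \ref{novaidentidadeGC}, neither of which changes the substance of the argument.
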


\begin{proof}
Let $A$ and $B$ be the sets of all elements $g+I$ where $g$ is in (a) and (b), respectively. 
Denote $C=A\cup B$.  If $f$ is a monomial in $J(X)$, we shall prove by induction on deg$(f)$ that
$f+I \in span C$. 

The cases deg$(f)=1$ and deg$(f)=2$ are trivial.

Suppose deg$(f)\geq 3$ and write $f=gh$ where $g,h \in J(X)$ are monomials with degree $<$ deg$(f)$.  By induction hypothesis it follows that $g+I$ and $h+I$ belong to $C$. Since $z_1z_2 \in I$ it is sufficient to consider two cases:

\begin{enumerate}
\item $g+I$ and $h+I$ belong to $A$. 

In this case, $g+I=Y'+I$, $h+I=Y_1'+I$ and  $f\equiv Y'Y_1'$.
By Lemma \ref{lemavariaveisycomutameassociamGC} it follows that $f+I \in A \subset span C$. 

\item $g+I$  belongs to $A$ and $h+I$ belongs to $B$.

In this case, $g+I=Y'+I$ and $h+I=Y_1'(z_iY_2')+I$.  
By Lemma \ref{novaidentidadeGC} 
we obtain
\begin{align}\label{aaa5}
f \equiv &  Y'(Y_1'(z_iY_2')) \nonumber\\
  \equiv &  \frac{1}{2} \left(Y'(z_i(Y_1'Y_2')) + Y_1'(z_i(Y'Y_2')) + Y_2'(z_i(Y'Y_1')) - z_i(Y'(Y_1'Y_2'))\right). 
\end{align}

We will show that $f+I \in span B$.
Firstly, we use Lemma \ref{lemavariaveisycomutameassociamGC} to order the variables in 
$Y_1'Y_2', \ Y'Y_2', \ Y'Y_1', \ Y'(Y_1'Y_2')$ appearing in (\ref{aaa5}). Now, if $Y_3>Y_4$ are monomials in $J(Y)$ then by $(y_1, z_1 , y_2) \in I$
we have $Y_3(z_iY_4) \equiv Y_4(z_iY_3)$. If necessary we can use this to prove that the summands in (\ref{aaa5}) 
are in $B$. 
Thus $f+I \in span B \subset span C$.
\end{enumerate}
The lemma is proved.
\end{proof}

\subsection{The classical grading, when $K$ is an infinite field }

In this subsection we describe the $\mathbb{Z}_2$-graded identities for $UJ_2$ with the classical grading when $K$ is infinite. 

\begin{theorem}\label{teoremaprincipalGC}
If $K$ is an infinite field of char$(K)\neq 2$ then $I=T_{\text{Cla}}(UJ_2)$, that is, $T_{\text{Cla}}(UJ_2)$ is generated, as a $T_{\mathbb{Z}_2}$-ideal, by 
the polynomials in  Lemma \ref{lemaGC}. Moreover, the set in Lemma \ref{lemageradorGC} is a basis for the
quotient vector space $J(X)/I$.
\end{theorem}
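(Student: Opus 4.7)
The approach is to follow the same skeleton as Theorems \ref{teoremaprincipalGA} and \ref{teoremaprincipalGS}. First, the containment $I \subseteq T_{\text{Cla}}(UJ_2)$ is immediate from Lemma \ref{lemaGC}; combined with Lemma \ref{lemageradorGC}, this shows that $\overline{S} = \{g + T_{\text{Cla}}(UJ_2) : g \in S\}$ spans $J(X)/T_{\text{Cla}}(UJ_2)$. Linear independence of $\overline{S}$ would then give both the basis statement and the equality $I = T_{\text{Cla}}(UJ_2)$, so the real task is to prove that independence.

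To that end, suppose $f = \sum_{g \in S} \lambda_g g \in T_{\text{Cla}}(UJ_2)$. Since $K$ is infinite, Proposition \ref{propositconseq} reduces the argument to the case where $f$ is multihomogeneous. Because $z_1 z_2 \in I$, every $g \in S$ contains at most one $z$-variable (and of degree $1$ when present), so each multihomogeneous component of $f$ has one of the two forms
\[
f = \lambda\, y_1^{k_1} \cdots y_r^{k_r} \qquad \text{or} \qquad f = \sum_{\{k',k''\}} \lambda_{\{k',k''\}} \, Y^{k'}(z_i\, Y^{k''}),
\]
where $Y^{m} := y_1^{m_1}\cdots y_r^{m_r}$, and in the second form the sum ranges over unordered decompositions $\{k', k''\}$ of a fixed multidegree $k$, each represented by its canonical ordering $Y^{k'} \le Y^{k''}$. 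The first case is immediate: substituting $Y_j = e_{11} + e_{22}$ yields $\lambda = 0$.

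For the second case, substitute $Y_j = u_j e_{11} + v_j e_{22}$ and $Z_i = b = e_{12}$, where $u_j, v_j \in K$ are free parameters. Since $\{e_{11}, e_{22}\}$ is a set of orthogonal idempotents generating the commutative associative algebra $(UJ_2)_0$, one obtains $Y_1^{m_1}\cdots Y_r^{m_r} = \bigl(\prod_j u_j^{m_j}\bigr) e_{11} + \bigl(\prod_j v_j^{m_j}\bigr) e_{22}$; combined with $e_{11}\circ b = e_{22}\circ b = b/2$, a short computation gives
\[
f(Y_1, \ldots, Y_r, b) = \frac{b}{4} \sum_{\{k',k''\}} \lambda_{\{k',k''\}} \bigl(u^{k'} + v^{k'}\bigr)\bigl(u^{k''} + v^{k''}\bigr),
\]
writing $u^{m} = \prod_j u_j^{m_j}$ and $v^{m} = \prod_j v_j^{m_j}$. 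Since $K$ is infinite, this polynomial in the $u_j, v_j$ must vanish identically. Expanding each factor as $u^k + v^k + u^{k'} v^{k''} + u^{k''} v^{k'}$, the monomial $u^{k'} v^{k''}$ (for $k' \neq k''$) has coefficient $\lambda_{\{k',k''\}}$, while $u^{k/2} v^{k/2}$ (when $k$ is even in every coordinate) has coefficient $2\lambda_{\{k/2,k/2\}}$. The main delicacy is this diagonal case $k' = k''$, where $\operatorname{char}(K) \neq 2$ is essential to divide by $2$; the off-diagonal pairs are pinned down directly. Hence every $\lambda_{\{k',k''\}} = 0$, which finishes the proof.
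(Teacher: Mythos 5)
Your proposal is correct and follows essentially the same route as the paper: the same reduction to multihomogeneous components, the same substitution $Y_j=\alpha_j e_{11}+\beta_j e_{22}$, $Z_i=e_{12}$, and the same resulting polynomial $\frac{1}{4}\sum\lambda_{\{k',k''\}}\bigl(u^{k'}+v^{k'}\bigr)\bigl(u^{k''}+v^{k''}\bigr)$ whose coefficients are extracted via Lemma \ref{lemapolcomutativo}. One small imprecision: for the pair with $k'=0$ (the basis element $z_i(y_1^{k_1}\cdots y_r^{k_r})$ with empty outer factor), the cross monomial $u^{0}v^{k}=v^{k}$ coincides with the diagonal monomials $u^{k},v^{k}$ contributed by every summand, so its coefficient is $\sum_{\{k',k''\}}\lambda_{\{k',k''\}}+\lambda_{\{0,k\}}$ rather than $\lambda_{\{0,k\}}$; as in the paper, this term must be handled last, after all other coefficients are already known to vanish, whereupon $2\lambda_{\{0,k\}}=0$ and char$(K)\neq 2$ finishes it.
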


\begin{proof}
By Lemma \ref{lemaGC} we have $I \subseteq T_{\text{Cla}}(UJ_2)$.

Let $S$ be the set in Lemma \ref{lemageradorGC}.
Write $\overline{S}=\{g+T_{\text{Cla}}(UJ_2): \ g\in S\}$.
Since $I \subseteq T_{\text{Cla}}(UJ_2)$ we have by Lemma \ref{lemageradorGC} that $J(X)/T_{\text{Cla}}(UJ_2)=span \overline{S}$.

We shall prove that $\overline{S}$ is a linearly independent set. 
Let
\[f(y_1,\ldots , y_r,z_1, \ldots ,z_n)=\sum_{g\in S} \lambda_g g \in T_{\text{Cla}}(UJ_2), \ \lambda_g \in K.\]
Since $K$ is an infinite field, every multihomogeneous component of $f$ belongs to
$T_{\text{Cla}}(UJ_2)$. 

Thus it is sufficient to suppose
\[f(y_1,\ldots ,y_r)= \lambda y_1^{k_1} \cdots y_r^{k_r} \ \mbox{or} \ 
f(y_1,\ldots ,y_r,z_j)= \sum_{l} \lambda_{l} y_1^{k_1 - l_1} \cdots y_r^{k_r - l_r}(z_j(y_1^{l_1} \cdots y_r^{l_r}))\]
where $l = (l_1, \ldots, l_r)$, $0\leq l_i \leq k_i$ for all $i$, and $y_1^{k_1 - l_1} \cdots y_r^{k_r - l_r} \leq y_1^{l_1} \cdots y_r^{l_r}$. We shall prove that $\lambda =
\lambda_{l}=0$ for all $l$.

In the first case, 
$f(1, \ldots, 1) = \lambda \cdot 1=0$
and so $\lambda = 0$.

In the second case, let $Y_i = \alpha_i e_{11} + \beta_i e_{22}$ for all $i$ and let $Z_j = e_{12}$, where $\alpha_i, \beta_i \in K$.
Remember that $(UJ_2)_0=span\{e_{11},e_{22}\}$. 
We have 
\[f(Y_1,\ldots ,Y_r,Z_j)=ue_{12}=0 \ \mbox{where} \ \]
\[u=(1/4)\sum_l \lambda_l\left(\alpha_1^{k_1} \cdots \alpha_r^{k_r} + \alpha_1^{k_1 - l_1} \cdots \alpha_r^{k_r - l_r}\beta_1^{l_1} \cdots \beta_r^{l_r} + \alpha_1^{l_1} \cdots \alpha_r^{l_r}\beta_1^{k_1 - l_1} \cdots \beta_r^{k_r - l_r} + \beta_1^{k_1} \cdots \beta_r^{k_r}\right).\]
Since $y_1^{k_1 - l_1} \cdots y_r^{k_r - l_r} \leq y_1^{l_1} \cdots y_r^{l_r}$, the coefficient of 
$\alpha_1^{k_1 - l_1} \cdots \alpha_r^{k_r - l_r}\beta_1^{l_1} \cdots \beta_r^{l_r}$ in $u$ is $(1/4)\lambda_l$ if 
$l=(l_1,\ldots ,l_r) \neq k=(k_1,\ldots , k_r)$. Since $K$ is infinite and $u=0$ for all $\alpha_i, \beta_i \in K$ it follows that
$\lambda_l=0$ for all $l\neq k$. Now 
\[u=(1/2) \lambda_k\left(\alpha_1^{k_1} \cdots \alpha_r^{k_r}  + \beta_1^{k_1} \cdots \beta_r^{k_r}\right)\]
and with analogous argument we have $\lambda_k=0$ too.

Therefore, the set $\overline{S}$ is a basis for the quotient vector space $J(X)/T_{\text{Cla}}(UJ_2)$.
Moreover, since $I \subseteq T_{\text{Cla}}(UJ_2)$, by Lemma \ref{lemageradorGC} we have 
$I=T_{\text{Cla}}(UJ_2) $.
\end{proof}


\subsection{The classical grading, when $K$ is a finite field }
Throughout this subsection, $K$ is a finite field with $|K| = q$ elements and char$(K)\neq 2$.

\begin{lemma}\label{lemaGCf}
The polynomial $y_1^q - y_1$ belongs to $T_{\text{Cla}}(UJ_2)$.
\end{lemma}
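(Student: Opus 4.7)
The plan is to evaluate $y_1^q - y_1$ on an arbitrary even element of $UJ_2$ and show the result is zero, using the fact that $K^\times$ has order $q-1$.

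First I would note that the even component $(UJ_2)_0 = \mathrm{span}\{e_{11}+e_{22},\, e_{11}-e_{22}\}$ is equal to $\mathrm{span}\{e_{11},\, e_{22}\}$, since $\mathrm{char}(K) \neq 2$ lets us solve for $e_{11}$ and $e_{22}$. So a generic substitution for $y_1$ has the form $Y = \alpha e_{11} + \beta e_{22}$ with $\alpha, \beta \in K$.

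Next I would compute the Jordan powers of $Y$. The matrix units satisfy $e_{11}\cdot e_{22} = e_{22}\cdot e_{11} = 0$ in $UT_2$, hence $e_{11}\circ e_{22} = 0$, while $e_{ii}\circ e_{ii} = e_{ii}$. Therefore the two summands in $Y$ behave as orthogonal idempotents in $UJ_2$, and an easy induction gives $Y^n = \alpha^n e_{11} + \beta^n e_{22}$ for every $n \geq 1$ (where the power is taken with respect to $\circ$, which is unambiguous since the subalgebra generated by $Y$ is associative).

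Finally, since $|K^\times| = q-1$ we have $\alpha^q = \alpha$ and $\beta^q = \beta$ for every $\alpha, \beta \in K$ (including zero). Thus $Y^q = \alpha e_{11} + \beta e_{22} = Y$, so $Y^q - Y = 0$ for all $Y \in (UJ_2)_0$, proving $y_1^q - y_1 \in T_{\text{Cla}}(UJ_2)$. There is no real obstacle here; the only subtlety is justifying the formula for $Y^n$, which is immediate from the orthogonal idempotent decomposition.
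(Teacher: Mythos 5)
Your proof is correct and follows essentially the same route as the paper: write an arbitrary even element as $Y=\alpha e_{11}+\beta e_{22}$, observe that $Y^q=\alpha^q e_{11}+\beta^q e_{22}=Y$ since $x^q=x$ for all $x\in K$. The paper states this more tersely; you merely spell out the orthogonal-idempotent justification for the formula $Y^n=\alpha^n e_{11}+\beta^n e_{22}$, which is a harmless (and welcome) elaboration.
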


\begin{proof}
In fact, given $Y_1 \in (UJ_2)_0$ we have $Y_1=\alpha e_{11}+\beta e_{22}$ for some $\alpha , \beta \in K$. Since $|K|=q$ we obtain 
$Y_1^q=\alpha^q e_{11}+\beta^q e_{22}=\alpha e_{11}+\beta e_{22}=Y_1$ as desired.
\end{proof}

\begin{notation}
Let $I'$ be the $T_{\mathbb{Z}_2}$-ideal of $J(X)$ generated by the polynomials in Lemmas \ref{lemaGC} and \ref{lemaGCf}.
\end{notation}

\begin{lemma}\label{lemageradorGCf}
Let $\widehat{S}$ be the subset of $J(X)$ formed by all polynomials
\begin{enumerate}
\item[(a)] $Y_1'$ and
\item[(b)] $Y_1'(z_iY_2')$,
\end{enumerate}
where $Y_1'=y_1^{k_1} \cdots y_r^{k_r}$;  $0\leq k_1, \ldots, k_r < q$; $Y_2'=y_1^{l_1} \cdots y_r^{l_r}$; $0\leq l_1, \ldots, l_r < q$; $Y_1' \leq Y_2'$; $r\geq 1$; $z_i \in Z$.
Then the quotient vector space $J(X)/I'$ is spanned by the set of all elements $g+I'$ where $g\in \widehat{S}$.
\end{lemma}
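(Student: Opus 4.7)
The plan is to derive the spanning set $\widehat{S}$ from the (larger) spanning set of Lemma \ref{lemageradorGC} by applying the new finite-field identity $y^q - y \in I'$ of Lemma \ref{lemaGCf}.

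First, I would observe that $I \subseteq I'$: every generator of $I$ listed in Lemma \ref{lemaGC} appears in the generating set of $I'$ (Lemmas \ref{lemaGC} and \ref{lemaGCf}). Hence the natural surjection $J(X)/I \twoheadrightarrow J(X)/I'$ shows that the spanning set produced by Lemma \ref{lemageradorGC} also spans $J(X)/I'$. Thus every coset in $J(X)/I'$ is a linear combination of $Y'+I'$ and $Y'(z_iY'')+I'$ with $Y' \leq Y''$, the $y$-exponents being arbitrary nonnegative integers.

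Next, I would reduce the exponents. By Lemma \ref{lemavariaveisycomutameassociamGC}, the subalgebra of $J(X)/I$ generated by the $y$'s is commutative and associative, so pure $y$-monomials can be manipulated modulo $I \subseteq I'$ as in an ordinary polynomial algebra, and in particular $y_j^{k_j}$ is unambiguous. Using $y_j^q \equiv y_j \pmod{I'}$, the rewriting $y_j^{k_j} \mapsto y_j^{k_j-(q-1)}$ is valid whenever $k_j \geq q$; iterating, any exponent can be brought into $\{0,1,\ldots,q-1\}$. I would apply this inside $Y'$ and inside $Y''$ separately, which is legitimate because $I'$ is an ideal and so the substitution can be performed inside the Jordan product $Y'(z_iY'')$. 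This produces monomials $\widetilde Y'(z_i\widetilde Y'')$ with all $y$-exponents in $\{0,\ldots,q-1\}$.

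Finally, this exponent reduction may break the ordering $\widetilde Y' \leq \widetilde Y''$ (for instance, reducing $y_1^{q+1}$ to $y_1^2$ changes the total length). Whenever $\widetilde Y' > \widetilde Y''$, I would swap the two factors using the congruence $Y_3(z_iY_4) \equiv Y_4(z_iY_3) \pmod{I'}$, which follows from $(y_1,z_1,y_2)\in I\subseteq I'$ exactly as at the end of the proof of Lemma \ref{lemageradorGC}. This yields the desired form $Y_1'(z_iY_2')$ with $Y_1'\leq Y_2'$.

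The main (and essentially only) technical point I expect is the interplay between exponent reduction and the ordering constraint: one has to reduce first and reorder afterwards, rather than the other way around. Apart from this bookkeeping, the argument is a straightforward combination of Lemmas \ref{lemavariaveisycomutameassociamGC}, \ref{lemageradorGC} and \ref{lemaGCf}, with no new identities required.
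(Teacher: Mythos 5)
Your proposal is correct and follows essentially the same route as the paper: pass from the spanning set of Lemma \ref{lemageradorGC} via $I\subseteq I'$, reduce all $y$-exponents below $q$ using $y^q-y\in I'$ together with the commutativity and associativity of the $y$-subalgebra modulo $I$, and restore the ordering $Y_1'\leq Y_2'$ with the swap coming from $(y_1,z_1,y_2)\in I'$. Your explicit remark that the exponent reduction can disturb the ordering and must precede the reordering is exactly the (implicit) bookkeeping in the paper's Case 2.
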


\begin{proof}
Let $A$ and $B$ be the sets of all elements $g+I'$ where $g$ is in (a) and (b), respectively. Denote $C = A \cup B$.
Let $f$ be a monomial in $J(X)$, we shall prove that $f+I' \in span C$. 

Since $I \subseteq I'$ we have by Lemma \ref{lemageradorGC} that the quotient vector space $J(X)/I'$ is spanned by the set of all polynomials:

\begin{enumerate}
\item[(a')] $Y'+I'$,
\item[(b')] $Y'(z_iY'')+I'$,
\end{enumerate}
where $Y'=y_1^{k_1} \cdots y_r^{k_r}$; $0\leq k_1, \ldots, k_r$; $Y''=y_1^{l_1} \cdots y_r^{l_r}$; $0\leq l_1, \ldots, l_r$; $Y' \leq Y''$; $r\geq 0$; $z_i\in Z$.
Let $A'$ and $B'$ be the sets of all elements in (a') and (b'), respectively. We shall prove that $A' \cup B' \subseteq span C$.

\

\noindent \emph{Case 1.} $f+I' \in A'$.

In this case, $f+I' = y_1^{k_1} \cdots y_r^{k_r} + I'$. Since $y_i^q - y_i \in I'$ we can suppose
$0\leq k_1, \ldots, k_r < q$. Thus, $f+I' \in A \subset span C$.

\

\noindent \emph{Case 2.} $f+I' \in B'$.

In this case, $f+I' = (y_1^{k_1} \cdots y_r^{k_r})(z_i(y_1^{l_1} \cdots y_r^{l_r})) + I'$. 
As in Case 1, we can suppose $0\leq k_1, \ldots, k_r < q$ and $0\leq l_1, \ldots, l_r < q$. 
Since $(y_1, z_i , y_2) \in I'$ we obtain 
\[ (y_1^{k_1} \cdots y_r^{k_r})(z_i(y_1^{l_1} \cdots y_r^{l_r})) + I'=(y_1^{l_1} \cdots y_r^{l_r})(z_i(y_1^{k_1} \cdots y_r^{k_r})) + I'.\]
Thus, we can suppose $y_1^{k_1} \cdots y_r^{k_r} \leq y_1^{l_1} \cdots y_r^{l_r}$, and 
consequently $f+I' \in B \subset span C$ as desired.
The lemma is proved.
\end{proof}

\begin{theorem}
If $K$ is a finite field with $|K|=q$ elements and  char$(K)\neq 2$ then $I'=T_{\text{Cla}}(UJ_2)$, that is, $T_{\text{Cla}}(UJ_2)$ is generated, as a $T_{\mathbb{Z}_2}$-ideal, by 
the polynomials in  Lemmas \ref{lemaGC} and \ref{lemaGCf}. Moreover, the set in Lemma \ref{lemageradorGCf} is a basis for the
quotient vector space $J(X)/I'$.
\end{theorem}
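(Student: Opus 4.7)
The plan is to parallel the infinite-field argument of Theorem~\ref{teoremaprincipalGC}, replacing every appeal to ``each multihomogeneous component belongs to the $T$-ideal'' by a controlled use of Proposition~\ref{propositconseq} and Lemma~\ref{lemapolcomutativo}. Lemmas~\ref{lemaGC} and~\ref{lemaGCf} give $I'\subseteq T_{\text{Cla}}(UJ_2)$, and Lemma~\ref{lemageradorGCf} produces the spanning set $\overline{S}=\{g+T_{\text{Cla}}(UJ_2):g\in\widehat{S}\}$ of $J(X)/T_{\text{Cla}}(UJ_2)$, so the task reduces to showing $\overline{S}$ is linearly independent. Given $f=\sum\lambda_g g\in T_{\text{Cla}}(UJ_2)$, I would first split $f=f_0+\sum_i f_{1,i}$ along $z$-multi-degree: since every basis element contains each $z_i$ to degree at most $1<q$, Proposition~\ref{propositconseq} (applied once per $z$-variable) places each summand in $T_{\text{Cla}}(UJ_2)$, where $f_0$ collects the type-(a) terms and $f_{1,i}$ collects those type-(b) terms whose odd variable is $z_i$.

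For $f_0=\sum_k\lambda_k y_1^{k_1}\cdots y_r^{k_r}$ I would substitute $y_j\mapsto\alpha_je_{11}+\beta_je_{22}$; orthogonality of $e_{11}$ and $e_{22}$ turns $f_0$ into $Ae_{11}+Be_{22}$ with $A=\sum_k\lambda_k\prod_j\alpha_j^{k_j}$ a polynomial of degree $<q$ in each $\alpha_j$ that vanishes on $K^r$, so Lemma~\ref{lemapolcomutativo} kills every $\lambda_k$.

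For fixed $i$ and $f_{1,i}=\sum_{k\leq l}\lambda_{(k,l)}(y^k)(z_iy^l)$ I would substitute $y_j\mapsto\alpha_je_{11}+\beta_je_{22}$ and $z_i\mapsto\gamma e_{12}$; the identity $(ae_{11}+be_{22})\circ e_{12}=\tfrac{1}{2}(a+b)e_{12}$ then yields
\[
f_{1,i}\ \mapsto\ \tfrac{\gamma}{4}\,P(\alpha,\beta)\,e_{12},\qquad P(\alpha,\beta)=\sum_{k\leq l}\lambda_{(k,l)}(\alpha^k+\beta^k)(\alpha^l+\beta^l),
\]
where $\alpha^k:=\prod_j\alpha_j^{k_j}$ and similarly for $\beta^k$. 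Letting $\gamma$ range over $K$ forces $P\equiv 0$ on $K^{2r}$. The decisive observation is that while $\deg_{\alpha_j}P$ can reach $2(q-1)$, the \emph{mixed} monomials of $P$---those involving at least one $\alpha$ and at least one $\beta$---come solely from the cross-products $\alpha^k\beta^l+\alpha^l\beta^k$, which already have degree $\leq q-1$ in every individual variable. Reducing $P$ modulo $\alpha_j^q=\alpha_j$ and $\beta_j^q=\beta_j$ therefore preserves every mixed monomial; the reduced polynomial has degree $<q$ in each variable and still vanishes on $K^{2r}$, so Lemma~\ref{lemapolcomutativo} makes every coefficient zero. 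Reading off the coefficient of $\alpha^a\beta^b$ with $a,b$ both nonzero gives $\lambda_{(\min(a,b),\max(a,b))}=0$ when $a\neq b$ and $2\lambda_{(a,a)}=0$ when $a=b$; since char$(K)\neq 2$, this eliminates $\lambda_{(k,l)}$ whenever both $k,l$ are nonzero. To finish, I would set $\beta=0$ in what remains of $P$, which leaves $4\lambda_{(0,0)}+2\sum_{l\neq 0}\lambda_{(0,l)}\alpha^l=0$ on $K^r$---again a polynomial identity of degree $<q$ in each variable---and one more application of Lemma~\ref{lemapolcomutativo} wipes out the last $\lambda$'s.

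The main obstacle is precisely the high $y$-degree phenomenon: because $Y_1'$ and $Y_2'$ share the same $y$-variables, $\deg_{y_j}f_{1,i}$ may reach $2(q-1)$, so Proposition~\ref{propositconseq} cannot separate $f_{1,i}$ by $y$-multi-degree and the direct analogue of the infinite-field proof stalls. The mixed-monomial trick above---recognizing that the cross-terms automatically live in the low-degree range accessible to Lemma~\ref{lemapolcomutativo}, and then cleaning up the residual $k=0$ indices by the substitution $\beta=0$---is what sidesteps this difficulty, and is the essential new ingredient compared with Theorem~\ref{teoremaprincipalGC}.
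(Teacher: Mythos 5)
Your proof is correct and follows essentially the same route as the paper's: both arguments isolate the $z$-free part and the part involving a single $z_i$, evaluate on $\alpha_je_{11}+\beta_je_{22}$ and $e_{12}$, reduce the resulting commutative identity modulo $x^q-x$, and exploit the fact that the mixed $\alpha\beta$-monomials arise only from the cross-terms and are already reduced, handling the $k=0$ coefficients separately at the end. Your write-up is, if anything, slightly more careful on the diagonal case $k=l$ (where the coefficient is $2\lambda_{(k,k)}$ and char$(K)\neq 2$ is needed), but the approach is the same.
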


\begin{proof}
 By Lemmas \ref{lemaGC} and \ref{lemaGCf} we have $I' \subseteq T_{\text{Cla}}(UJ_2)$.

Consider the set $\widehat{S}$ in Lemma \ref{lemageradorGCf} and write 
$\overline{S}=\{g+T_{\text{Cla}}(UJ_2): \ g\in \widehat{S}\}$.
Since $I' \subseteq T_{\text{Cla}}(UJ_2)$, by Lemma \ref{lemageradorGCf} it follows that $J(X)/T_{\text{Cla}}(UJ_2)=span \overline{S}$.

We shall prove that $\overline{S}$ is a linearly independent set.
Let
\[f(y_1,\ldots ,y_r, z_1,\ldots , z_n)=\sum_{g\in \widehat{S}} \lambda_g g \in T_{\text{Cla}}(UJ_2), \ \lambda_g \in K.\]
In particular,
\[h=f(y_1,\ldots ,y_r, 0, \ldots ,0)= \sum_k \lambda_{k} y_1^{k_1} \cdots y_r^{k_r} \in T_{\text{Cla}}(UJ_2), \]
where $k=(k_1,\ldots ,k_r)$, $0 \leq k_i < q$ for all $i$.
Since $|K| = q$ and $\deg_{y_i} h < q$ for all $i$, we have 
\[h_k(y_1,\ldots y_r)=\lambda_{k} y_1^{k_1} \cdots y_r^{k_r} \in T_{\text{Cla}}(UJ_2)\]
for all $k$. Thus, 
$h_k(1,\ldots ,1)=\lambda_k \cdot 1=0$ and so
$\lambda_k = 0$.

Now, we have 
\[f(y_1,\ldots ,y_r, z_1,\ldots , z_n)=\sum_{i=1}^n\sum_{(l,m)} \lambda_{(l,m)} y_1^{l_1} \cdots y_r^{l_r}(z_i(y_1^{m_1} \cdots y_r^{m_r})) \in T_{\text{Cla}}(UJ_2),\]
where $l=(l_1,\ldots ,l_r)$, $0 \leq l_j < q$ for all $j$, $m=(m_1,\ldots ,m_r)$, $0 \leq m_j < q$ for all $j$, and $y_1^{l_1} \cdots y_r^{l_r} \leq y_1^{m_1} \cdots y_r^{m_r}$. Since $f(y_1,\ldots ,y_r, 0, \ldots ,0, z_i,0, \ldots , 0)\in T_{\text{Cla}}(UJ_2)$ we can suppose
\[f=f(y_1,\ldots ,y_r, z_i)=\sum_{(l,m)} \lambda_{(l,m)} y_1^{l_1} \cdots y_r^{l_r}(z_i(y_1^{m_1} \cdots y_r^{m_r})) \in T_{\text{Cla}}(UJ_2).\]
 Let $Y_j = \alpha_j e_{11} + \beta_j e_{22}$ for all $j$, and $Z_i = e_{12}$, where $\alpha_j, \beta_j \in K$. We have
$f(Y_1, \ldots, Y_r, Z_i)=ue_{12}=0$ where
\[u=(1/4)\sum_{(l,m)}
\lambda_{(l,m)}\left(
 \alpha_1^{l_1 + m_1} \cdots \alpha_r^{l_r + m_r} + \alpha_1^{l_1} \cdots \alpha_r^{l_r}\beta_1^{m_1} \cdots \beta_r^{m_r} + \alpha_1^{m_1} \cdots \alpha_r^{m_r}\beta_1^{l_1} \cdots \beta_r^{l_r} + \beta_1^{l_1 + m_1} \cdots \beta_r^{l_r + m_r}\right).\]
Since $\alpha_i^q=\alpha_i$ and $\beta_i^q=\beta_i$ we can write 
\[u=\sum_{(l,m)}\eta_{(l,m)}\alpha_1^{l_1} \cdots \alpha_r^{l_r}\beta_1^{m_1} \cdots \beta_r^{m_r}=0\]
where $l=(l_1,\ldots ,l_r)$, $0 \leq l_i < q$ for all $i$, $m=(m_1,\ldots ,m_r)$, $0 \leq m_i < q$ for all $i$, $\eta_{(l,m)} \in K$.
In particular, $\eta_{(l,m)}=0$ for all $(l,m)$. Now, if $l\neq (0,\ldots ,0)$ and 
$y_1^{l_1} \cdots y_r^{l_r} \leq y_1^{m_1} \cdots y_r^{m_r}$ we have $(1/4)\lambda_{(l,m)}=\eta_{(l,m)}=0$ and so 
$\lambda_{(l,m)}=0$. In particular,
\[u=(1/2)\sum_{(l,m), l=(0,\ldots ,0)}
\lambda_{(l,m)}\left(
 \alpha_1^{ m_1} \cdots \alpha_r^{m_r}  + \beta_1^{ m_1} \cdots \beta_r^{ m_r}\right)=0.\]
Since 
$0 \leq m_i < q$ for all $i$ we obtain $\lambda_{(l,m)}=0$ if $l=(0,\ldots ,0)$.

Therefore, the set $\overline{S}$ is a basis for the quotient vector space $J(X)/T_{\text{Cla}}(UJ_2)$.
Moreover, since $I' \subseteq T_{\text{Cla}}(UJ_2)$, by Lemma \ref{lemageradorGCf} we have 
$I'=T_{\text{Cla}}(UJ_2) $.
\end{proof}

\section*{Funding}
Mateus Eduardo Salom\~ao was supported by Ph.D. grant from Coordena\c{c}\~ao de Aperfei\c{c}oamento de Pessoal de N\'{i}vel Superior (CAPES). Dimas Jos\'e Gon\c{c}alves was partially supported by Funda\c{c}\~ao de Amparo \`a Pesquisa do Estado de S\~ao Paulo (FAPESP) grant No. 2018/23690-6, and by Conselho Nacional de Desenvolvimento Cient\'{i}fico e Tecnol\'ogico (CNPq) grant No. 406401/2016-0.

\end{document}